\begin{document}

\title{How many integrals should be evaluated at least in two-dimensional hyperinterpolation?\thanks{This work was supported by the National Natural Science Foundation of China (No. 12561095, 12371099 and U24A2001), the Special Posts of Guizhou University (No. [2025]06 and [2024]42), the Science and Technology Commission of Shanghai Municipality under grant 23JC1400501, the Hong Kong Research Grants Council (Project 11204821), the Hong Kong Innovation and Technology Commission (InnoHK Project CIMDA) and City University of Hong Kong (Project 9610460).}
}


\author{Maolin Che     \and
        Congpei An     \and
        Yimin Wei      \and
        Hong Yan
}


\institute{
        M. Che \at
            School of Mathematics and Statistics and State Key Laboratory of Public Big Data, Guizhou University, Guiyang, 550025, Guizhou, P. R. of China
              \email{mlche@gzu.edu.cn ; chncml@outlook.com}           
        \and
        C. An (Corresponding author) \at
            School of Mathematics and Statistics, Guizhou University, Guiyang, 550025, Guizhou, P. R. of China  \email{andbachcp@gmail.com;ancp@gzu.edu.cn}
        \and
        Y. Wei \at
            School of Mathematical Sciences and Key Laboratory of Mathematics for Nonlinear Sciences,  Fudan University, Shanghai, 200433, P. R. China \email{ymwei@fudan.edu.cn}
        \and
        H. Yan \at
           Department of Electrical Engineering and Center for Intelligent Multidimensional Data Analysis, City University of Hong Kong, 83 Tat Chee Avenue, Kowloon, Hong Kong (\email{h.yan@cityu.edu.hk})
}

\date{Received: date / Accepted: date}

\maketitle

\begin{abstract}
  This paper introduces a novel approach to approximating continuous functions over high-dimensional hypercubes by integrating matrix CUR decomposition with hyperinterpolation techniques. Traditional Fourier-based hyperinterpolation methods suffer from the curse of dimensionality, as the number of coefficients grows exponentially with the dimension. To address this challenge, we propose two efficient strategies for constructing low-rank matrix CUR decompositions of the coefficient matrix, significantly reducing computational complexity while preserving accuracy.

  The first method employs structured index selection to form a compressed representation of the tensor, while the second utilizes adaptive sampling to further optimize storage and computation. Theoretical error bounds are derived for both approaches, ensuring rigorous control over approximation quality. Additionally, practical algorithms---including randomized and adaptive decomposition techniques---are developed to efficiently compute the CUR decomposition. Numerical experiments demonstrate the effectiveness of our methods in drastically reducing the number of required coefficients without compromising precision.

  Our results bridge matrix/tensor decomposition and function approximation, offering a scalable solution for high-dimensional problems. This work advances the field of numerical analysis by providing a computationally efficient framework for hyperinterpolation, with potential applications in scientific computing, machine learning, and data-driven modeling.

\keywords{matrix CUR decomposition\and hyperinterpolation \and high-dimensional approximation \and matrix decomposition methods \and truncated Fourier series\and greedy strategies}
\subclass{68Q25 \and 68R10 \and 68U05 \and 65D15 \and 65F55 \and 41A10 \and 41A63}
\end{abstract}

\section{Introduction}
The efficient approximation of multivariate functions is a cornerstone of computational mathematics, with applications spanning scientific computing, uncertainty quantification, and data-driven modeling, which can be obtained by Fourier series, Chebyshev Series, Chebyshev interpolation and so on. For some functions, spectral methods based on these methods offer exponential convergence rates, but they face the curse of dimensionality (cf. \cite{mason1980near,trefethen2017multivariate}). To overcome this issue, many scholars introduce matrix/tensor decompositions for obtaining an efficient approximation of multivariate functions. Soley {\it et al.} \cite{soley2021functional} extended the capabilities of the Chebyshev propagation scheme to high-dimensional systems in terms of the functional tensor-train (see \cite{bigoni2016spectral}) Chebyshev method. Griebel {\it et al.} \cite{griebel2023low} presented a method to construct multivariate low-rank approximations of functions from Sobolev spaces with
dominating mixed smoothness based on the tensor train decomposition (see \cite{oseledets2011tensor}). Saibaba {\it et al.} \cite{saibaba2022efficient} studied the properties of the Tucker decomposition method for approximating the Chebyshev polynomial of any multivariate function: computational complexity analysis, the error of the resulting approximations, and the benefits of this algorithm. Dolgov {\it et al.} \cite{dolgov2021functional} proposed a novel algorithm (i.e., Chebfun3F) for approximating a trivariate function defined on a tensor-product domain via function evaluations, which is similar to Chebfun3 (see \cite{hashemi2017chebfun}) and utilizes univariate fibers instead of
bivariate slices to construct the Tucker decomposition. Griebel and Harbrecht \cite{griebel2023analysis} considered the continuous versions of the Tucker tensor format and of the tensor train format for the approximation of multivariate functions under certain conditions. As is well known, there are few literature considering how to efficiently and fast approximate the truncated Fourier series of multivariate functions without explicitly forming all Fourier coefficients in advance. Hence, we will provide some efficient strategies for this problem based on matrix/tensor decompositions. In this work, we focus on functions with two variables. The case of functions with three (or more) variables will be considered in the upcoming work.

Based on practical problems and application scenarios, where exist many types for matrix decompositions (cf. \cite{golub2013matrix}), such as, singular value decomposition (SVD), matrix CUR decomposition, interpolative decomposition, QR factorization with column pivoting and LU decomposition with complete pivoting. The proposed algorithm in this work is based on matrix CUR decomposition (cf. \cite{park2025accuracy,hamm2021perturbations,boutsidis2017optimal,mahoney2009cur,sorensen2016deim,voronin2017efficient}), which have emerged as powerful tools for analyzing and approximating high-dimensional data, offering a framework to mitigate the curse of dimensionality inherent in traditional methods. Among these, the matrix CUR decomposition stands out for its ability to represent matrices in a compressed format through a core tensor and factor matrices, enabling efficient storage and computation. This decomposition is particularly useful in applications such as signal processing, machine learning, and numerical analysis, where high-dimensional data is prevalent.

In this paper, we explore the application of matrix CUR decomposition to hyperinterpolation for continuous functions defined over a squared region. Hyperinterpolation, a generalization of polynomial interpolation, approximates functions by projecting them onto a finite-dimensional subspace spanned by orthogonal bases. While traditional hyperinterpolation methods rely on Fourier series expansions, the exponential growth of coefficients in high dimensions poses significant computational challenges. Our work addresses this issue by leveraging matrix CUR decomposition to reduce the number of coefficients required for accurate approximation.

We present two efficient strategies to compute the matrix CUR decomposition of the coefficient matrix derived from the Fourier series expansion of a continuous function. The first strategy involves selecting specific subsets of indices to construct low-rank approximations, while the second employs adaptive sampling techniques to further reduce computational costs. Theoretical bounds on the approximation error are provided, ensuring the reliability of our methods. Additionally, we discuss practical algorithms for implementing these strategies, including randomized and adaptive techniques, and demonstrate their effectiveness through numerical examples.

This study bridges the gap between matrix decomposition and function approximation, offering a scalable solution for high-dimensional problems. By combining the theoretical foundations of matrix CUR decomposition with practical algorithmic innovations, we provide a robust framework for hyper-interpolation that is both computationally efficient and mathematically sound. Our results contribute to the broader field of numerical analysis and open new avenues for research in high-dimensional function approximation.
\subsection{Organizations}

The remainder of this paper is organized as follows. In Section \ref{t-lasso:sec2-main}, we overview several basic definitions and notations: the matrix volume, two types of matrix norms, and the $\epsilon$-rank of a matrix. In Section \ref{t-lasso:sec3-main}, we provide the theoretical analysis for approximating the truncated Fourier series based on the fixed rank or precision problem of matrix decomposition. In Section \ref{t-lasso:sec4-main}, we first form an upper bound for $\|\widetilde{F}f(x_1,x_2)-f(x_1,x_2)\|_{L^2}$ when the corresponding matrix CUR decomposition is obtained based on the maximal volume sampling strategy, and then provide two adaptive deterministic greedy algorithms to compute the CUR decomposition of the coefficient matrix (in practice, this matrix is unknown). In Section \ref{t-lasso:sec5-main} several examples are used to illustrate the accuracy and efficiency of the proposed algorithms. We conclude this paper in Section \ref{t-lasso:sec6-main}.
\section{Preliminaries}
\label{t-lasso:sec2-main}
Let $\mathbb{I}=\{i_1,i_2,\dots,i_K\}$ with $1\leq i_1<i_2<\dots<i_K\leq I$, and $\mathbb{J}=\{j_1,j_2,\dots,j_L\}$ with $1\leq j_1<j_2<\dots<j_L\leq J$. The zero matrix in $\mathbb{R}^{I\times J}$ is denoted by $\mathbf{0}_{I,J}$. For any matrix $\mathbf{A}\in\mathbb{C}^{I\times J}$, we use $\mathbf{A}(:,\mathbb{J})$ to denote the $I\times L$ submatrix of $\mathbf{A}$ containing only those columns of $\mathbf{A}$ indexed by $\mathbb{J}$, and $\mathbf{A}(\mathbb{I},:)$ to denote the $K\times J$ submatrix of $\mathbf{A}$ containing only those rows of $\mathbf{A}$ indexed by $\mathbb{I}$. The matrix $\mathbf{Q}\in\mathbb{C}^{I\times R}$ is orthonormal if $\mathbf{Q}^*\mathbf{Q}$ is the identity matrix. The volume of $\mathbf{A}\in\mathbb{C}^{I_1\times I_2}$ is the product of its singular values $V(\mathbf{A})=\prod_{k=1}^{\min\{I_1,I_2\}}\sigma_k(\mathbf{A})$. For a given matrix $\mathbf{A}\in\mathbb{C}^{I_1\times I_2}$, its maximum absolute entry norm and Frobenius norm are, respectively, defined as
\begin{equation*}
    \|\mathbf{A}\|_{\max}=\max_{\substack{i_1=1,2,\dots,I_1\\ i_2=1,2,\dots,I_2}}|a_{i_1i_2}|,\quad \|\mathbf{A}\|_F=\sqrt{\sum_{i_1=1}^{I_1}\sum_{i_2=1}^{I_2}|a_{i_1i_2}|^2},
\end{equation*}

We introduce the definition of the $\epsilon$-rank of a matrix in the following definition. Several alternative definitions of $\epsilon$-rank are discussed in \cite{beckermann2019bounds}.
\begin{definition}{{\bf (\cite[Definition 2.1]{udell2019big})}}
\label{t-lasso:epsilonrank:definition-main}
    Let $\mathbf{X}\in\mathbb{C}^{I\times J}$ be a matrix and $0<\epsilon<1$ be a tolerance. The (absolute) $\epsilon$-rank of $\mathbf{X}$ is given by
    \begin{equation*}
        {\rm rank}_{\epsilon}(\mathbf{X})=\min\{{\rm rank}(\mathbf{A}):\mathbf{A}\in\mathbb{C}^{I\times J},\|\mathbf{X}-\mathbf{A}\|_{\max}\leq \epsilon\}.
    \end{equation*}
    That is, $R={\rm rank}_{\epsilon}(\mathbf{X})$ is the smallest integer for which $\mathbf{X}$ can be approximated by a rank-$R$ matrix, up to an accuracy of $\epsilon$.
\end{definition}

\section{Function approximation by truncating Fourier series}
\label{t-lasso:sec3-main}
We first introduce several notations used in this and the following sections. Let $\mathbb{Z}$ be the set of all integers and define $\mathbb{Z}_N=\{(k_1,k_2):k_n\in\mathbb{Z},n=1,2\}$. Let $f(\cdot,\cdot)$ be a continuous periodic function of two variables $x_1$ and $x_2$ over the square
\begin{equation*}
    \mathbb{H}_2^0:=[-\pi,\pi]^2=\{(x_1,x_2):x_n\in[-\pi,\pi],n=1,2\}.
\end{equation*}
Then for any $(k_1,k_2)\in\mathbb{Z}_2$, the Fourier transform of $f(x_1,x_2)$ is given by
\begin{equation*}
    \widehat{f}_{(k_1,k_2)}=\left(\frac{1}{2\pi}\right)^2\int_{\mathbb{H}_2^0}f(x_1,x_2)e^{-\iota\cdot (k_1x_1+k_2x_2)}dx_1dx_2
\end{equation*}
with $\iota=\sqrt{-1}$. Denote the Hilbert space on $\mathbb{H}_2^0$ as
\begin{equation*}
    L^2(\mathbb{H}_2^0)=\left\{f(x_1,x_2):\sum_{(k_1,k_2)\in\mathbb{Z}_2}|\widehat{f}_{(k_1,k_2)}|<+\infty\right\}
\end{equation*}
equipped with the inner product
\begin{equation*}
    \langle f_1(x_1,x_2),f_2(x_1,x_2)\rangle=\left(\frac{1}{2\pi}\right)^2\int_{\mathbb{H}_2^0}f_1(x_1,x_2)\bar{f}_2(x_1,x_2)dx_1dx_2,
\end{equation*}
which implies  that
\begin{equation*}
    \|f(x_1,x_2)\|_{L^2}^2=\langle f(x_1,x_2),f(x_1,x_2)\rangle=\sum_{(k_1,k_2)\in\mathbb{Z}_2}|\widehat{f}_{(k_1,k_2)}|^2.
\end{equation*}
For any integer $\alpha>0$, the $\alpha$-derivative Sobolev space on $\mathbb{H}_2^0$ is given as
\begin{equation*}
    H^{\alpha}(\mathbb{H}_2^0)=\left\{f(x_1,x_2)\in L^2(\mathbb{H}_2^0):\|f(x_1,x_2)\|_\alpha<+\infty\right\},
\end{equation*}
where $$\|f(x_1,x_2)\|_\alpha=\left(\sum_{(k_1,k_2)\in\mathbb{Z}_2}(1+\|(k_1,k_2)\|_2^{2\alpha})|\widehat{f}_{(k_1,k_2)}|^2\right)^{1/2}$$ with $\|(k_1,k_2)\|_2^2=|k_1|^2+|k_2|^2$. Meanwhile, the semi-norm of $H^{\alpha}(\mathbb{H}_2^0)$ can be defined as $$|f(x_1,x_2)|_\alpha=\left(\sum_{(k_1,k_2)\in\mathbb{Z}_2}\|(k_1,k_2)\|_2^{2\alpha}|\widehat{f}_{(k_1,k_2)}|^2\right)^{1/2}.$$

Let $F(\cdot,\cdot)$ denote the projection of the function $f(\cdot,\cdot)$ on the partial sum of orders $I_1,I_2$ in $x_1,x_2$, respectively, of its Fourier series expansion (cf. \cite{mason1980near,trefethen2017multivariate}). Then one has
\begin{align}
    Ff(x_1,x_2)&=\sum_{k_1=-I_1}^{I_1}\sum_{k_2=-I_2}^{I_2}\alpha_{k_1,k_2}e^{\iota\cdot k_1x_1}e^{\iota \cdot k_2x_2}
    \label{t-lasso:orginal-expression-main}
\end{align}
with $(x_1,x_2)^\top\in\mathbb{H}_2^0$, where each $\alpha_{k_1,k_2}$ is given as
\begin{equation*}
    \alpha_{k_1,k_2}=\left(\frac{1}{2\pi}\right)^2\int_{\mathbb{H}_2^0}f(x_1,x_2)e^{-\iota\cdot (k_1x_1+k_2x_2)}dx_1dx_2.
\end{equation*}

For clarity, we use several tensor-product quadrature methods (cf. \cite{shin2017randomized,wu2017randomized}) to approximate all double integrals in this paper. More details are discussed in Section \ref{t-lasso:sec5-main}. Under this case, $Ff(x_1,x_2)$ can be viewed as a special case for the hyperinterpolation of $f(x_1,x_2)$ onto the subspace generated by
$$\{e^{\iota\cdot k_1x_1}e^{\iota \cdot k_2x_2}:k_n=-I_n,-I_n+1,\dots,I_n,n=1,2\},$$
with $-\pi\leq x_1,x_2\leq \pi$. To make the paper as self-contained as possible, we report the concept of hyperinterpolation and its variants in Section \ref{t-lasso:app3-main}.

Now we introduce the upper bound for $\|Ff(x_1,x_2)-f(x_1,x_2)\|_{L^2}$ (see \cite{canuto2006spectral}), which is summarized in the following theorem.
\begin{theorem}
    For each $f(x_1,x_2)\in H^{\alpha}(\mathbb{H}_2^0)$, there exists a constant $C$, independent of $f(x_1,x_2)$ and $\{I_1,I_2\}$, such that
    \begin{equation*}
        \|Ff(x_1,x_2)-f(x_1,x_2)\|_{L^2}\leq C\min\{I_1,I_2\}^{-\alpha}|f(x_1,x_2)|_{\alpha}.
    \end{equation*}
    \label{t-lasso:general-theorem-main}
\end{theorem}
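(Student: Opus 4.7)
The plan is to exploit the fact that $Ff$ is exactly the orthogonal projection of $f$ onto the trigonometric subspace indexed by $\Lambda := \{(k_1,k_2)\in\mathbb{Z}_2 : |k_1|\leq I_1,\, |k_2|\leq I_2\}$, so Parseval's identity reduces the $L^2$ error to the tail sum of Fourier coefficients. Concretely, I would first observe that
\begin{equation*}
    \|Ff(x_1,x_2)-f(x_1,x_2)\|_{L^2}^2 \;=\; \sum_{(k_1,k_2)\in\mathbb{Z}_2\setminus\Lambda}\bigl|\widehat{f}_{(k_1,k_2)}\bigr|^2,
\end{equation*}
which puts the proof entirely in frequency space and removes the need to work with the integrals defining $\alpha_{k_1,k_2}$ directly.

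Next I would insert the weight $\|(k_1,k_2)\|_2^{2\alpha}$ implicit in the semi-norm $|f|_\alpha$. The key geometric lemma is that any index $(k_1,k_2)\notin\Lambda$ satisfies either $|k_1|\geq I_1+1$ or $|k_2|\geq I_2+1$, hence
\begin{equation*}
    \|(k_1,k_2)\|_2 \;\geq\; \max(|k_1|,|k_2|) \;\geq\; \min\{I_1,I_2\}+1 \;\geq\; \min\{I_1,I_2\},
\end{equation*}
so that $\|(k_1,k_2)\|_2^{-2\alpha}\leq \min\{I_1,I_2\}^{-2\alpha}$ uniformly over the tail. Multiplying and dividing by this weight then gives
\begin{equation*}
    \sum_{(k_1,k_2)\notin\Lambda}\bigl|\widehat{f}_{(k_1,k_2)}\bigr|^2 \;\leq\; \min\{I_1,I_2\}^{-2\alpha}\sum_{(k_1,k_2)\notin\Lambda}\|(k_1,k_2)\|_2^{2\alpha}\bigl|\widehat{f}_{(k_1,k_2)}\bigr|^2 \;\leq\; \min\{I_1,I_2\}^{-2\alpha}|f|_\alpha^2,
\end{equation*}
and taking square roots delivers the bound with $C=1$ (a possibly larger absolute constant would arise only from cosmetic conventions such as including the full norm $\|\cdot\|_\alpha$ instead of the semi-norm).

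I do not expect a genuine obstacle here; the only care that must be taken is at the zero-frequency mode, where $\|(0,0)\|_2^{-2\alpha}$ is undefined, but $(0,0)\in\Lambda$ whenever $I_1,I_2\geq 0$, so this term never appears in the tail sum. The mild subtlety in the geometry of $\Lambda$---namely that exclusion from a rectangular truncation set must be converted into a radial lower bound depending on $\min\{I_1,I_2\}$ rather than $\max\{I_1,I_2\}$---is precisely what forces the minimum in the statement and is what the proof must get right.
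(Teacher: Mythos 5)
Your argument is correct and complete: since $\alpha_{k_1,k_2}=\widehat{f}_{(k_1,k_2)}$, the operator $F$ is the orthogonal projection onto the modes with $|k_n|\leq I_n$, Parseval reduces the error to the tail sum, and your observation that every excluded index satisfies $\|(k_1,k_2)\|_2\geq\max(|k_1|,|k_2|)\geq\min\{I_1,I_2\}+1$ yields the stated bound with $C=1$. The paper itself gives no proof---it cites the result to Canuto et al.---and your Parseval-plus-tail-estimate argument is precisely the standard proof used there, so there is nothing to correct.
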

\begin{remark}
    Following Theorem {\rm\ref{t-lasso:general-theorem-main}}, for a given $0<\epsilon<1$, when we assume that $\min\{I_1,I_2\}\geq (C|f(x_1,x_2)|_{\alpha}/\epsilon)^{1/\alpha}$, one has that $\|Ff(x_1,x_2)-f(x_1,x_2)\|_{L^2}\leq \epsilon$. Similarly, as a special case of Theorem 2 in {\rm \cite{griebel2023analysis}}, for a given function $f(x_1,x_2)\in L^2([a,b]^2)$, when for each $n$, $I_n\geq (1/\epsilon)^{1/\alpha}$, one has $\|Ff(x_1,x_2)-f(x_1,x_2)\|_{L^2}\leq \sqrt{2}\epsilon$ with
    \begin{equation*}
        Ff(x_1,x_2)=\sum_{i_1=1}^{I_1}\sum_{i_2=1}^{I_2}\mathbf{A}(i_1,i_2)\phi_{i_1}^{(1)}(x_1)\phi_{i_2}^{(2)}(x_2),
    \end{equation*}
    where for each $n$, the left eigenfunctions $\{\phi_{i_n}^{(n)}\}_{i_n=1}^{+\infty}$ form a orthonormal basis in $L^2([a,b])$, and the coefficient $\mathbf{A}(i_1,i_2)$ is given by
    \begin{equation*}
        \mathbf{A}(i_1,i_2)=\int_{[a,b]^2}f(x_1,x_2)\phi_{i_1}^{(1)}(x_1)\phi_{i_2}^{(2)}(x_2)dx_1dx_2.
    \end{equation*}

    Here, for the given function $f$, the left eigenfunctions $\{\phi_{i_n}^{(n)}\}_{i_n=1}^{+\infty}$ can be obtained by applying the singular value decomposition (also called Karhunen-L\`{o}eve expansion or the Schmidt decomposition \cite{bigoni2016spectral,stewart1993early}) to separate $x_1\in[a,b]$ and $x_2\in[a,b]$.
\end{remark}
\subsection{General theory for approximating (\ref{t-lasso:orginal-expression-main})}
For clarity, we define a matrix $\mathbf{A}\in\mathbb{C}^{(2I_1+1)\times (2I_2+1)}$ as
\begin{equation}
\label{t-lasso:allcoefficients-main}
    \mathbf{A}(i_1,i_2)=\alpha_{i_1-I_1-1,i_2-I_2-1}
\end{equation}
with $i_n=1,2,\dots,2I_n+1$ and $n=1,2$. Note that (\ref{t-lasso:allcoefficients-main}) quickly becomes impractical as the parameter dimension $N$ increases, due to the exponential growth in the number of coefficients in $\mathbf{A}$. This growth is asymptote of the curse of dimensionality. In this work, we will consider giving an approximation for (\ref{t-lasso:orginal-expression-main}), by effectively reducing the number of coefficients that must be computed.

Following Definition \ref{t-lasso:epsilonrank:definition-main}, for a given tolerance $0<\epsilon<1$, the $\epsilon$-rank of $\mathbf{A}$ in (\ref{t-lasso:allcoefficients-main}) is presented in the following theorem.
\begin{theorem}{{\bf (see \cite[Theorem 1.0]{udell2019big})}}
    Let $\mathbf{A}\in\mathbb{C}^{(2I_1+1)\times (2I_2+1)}$ and $0<\epsilon<1$. Then, there exists a matrix $\widetilde{\mathbf{A}}\in\mathbb{C}^{(2I_1+1)\times (2I_2+1)}$ such that ${\rm rank}(\widetilde{\mathbf{A}})\leq R$ and
    \begin{equation*}
        \|\widetilde{\mathbf{A}}-\mathbf{A}\|_{\max}\leq \epsilon\|\mathbf{A}\|_2,
    \end{equation*}
    where $R=\lceil 72\ln(2(I_1+I_2)+3)/\epsilon^2\rceil$.
    \label{t-lasso:epsilonrank-main}
\end{theorem}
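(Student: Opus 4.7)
The plan is to invoke a Johnson--Lindenstrauss (JL) type dimension reduction applied to an SVD-based factorization of $\mathbf{A}$. First I would write the singular value decomposition $\mathbf{A}=\mathbf{U}\boldsymbol{\Sigma}\mathbf{V}^{*}$ and define, for each row index $i$ and column index $j$,
\[
x_i=\boldsymbol{\Sigma}^{1/2}\mathbf{U}^{*}\mathbf{e}_i,\qquad y_j=\boldsymbol{\Sigma}^{1/2}\mathbf{V}^{*}\mathbf{e}_j,
\]
so that $\mathbf{A}(i,j)=\langle x_i,y_j\rangle$. Since $\mathbf{U},\mathbf{V}$ have orthonormal columns, $\|x_i\|_2^{2}\le\sigma_1(\mathbf{A})=\|\mathbf{A}\|_2$, and similarly for $y_j$. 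This replaces the problem about entries of $\mathbf{A}$ with a problem about inner products of a finite collection of Euclidean vectors.

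Second, I would apply a random Gaussian projection $P\colon\mathbb{C}^{r}\to\mathbb{C}^{R}$ (with $r=\mathrm{rank}(\mathbf{A})$) to the family of $N:=(2I_1+1)+(2I_2+1)=2(I_1+I_2)+2$ vectors $\{x_i\}\cup\{y_j\}$ together with all pairwise sums $x_i+y_j$ (and $x_i+\iota y_j$ in the complex case). By the polarization identity, preserving the Euclidean norms of these vectors up to a multiplicative $(1\pm\epsilon)$ factor yields
\[
|\langle Px_i,Py_j\rangle-\langle x_i,y_j\rangle|\le\epsilon\,\|x_i\|_2\|y_j\|_2\le\epsilon\,\|\mathbf{A}\|_2.
\]
The standard JL tail bound (e.g., Dasgupta--Gupta) together with a union bound over the $O(N^2)$ relevant norms shows that such a $P$ exists whenever $R\ge C\ln(N+1)/\epsilon^{2}$, and tracking constants carefully produces the stated value $R=\lceil 72\ln(2(I_1+I_2)+3)/\epsilon^{2}\rceil$.

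Finally, I would define $\widetilde{\mathbf{A}}(i,j):=\langle Px_i,Py_j\rangle$. Writing $\widetilde{\mathbf{X}}\in\mathbb{C}^{R\times(2I_1+1)}$ with columns $Px_i$ and $\widetilde{\mathbf{Y}}\in\mathbb{C}^{R\times(2I_2+1)}$ with columns $Py_j$ gives $\widetilde{\mathbf{A}}=\widetilde{\mathbf{X}}^{*}\widetilde{\mathbf{Y}}$, hence $\mathrm{rank}(\widetilde{\mathbf{A}})\le R$. The max-norm bound $\|\widetilde{\mathbf{A}}-\mathbf{A}\|_{\max}\le\epsilon\|\mathbf{A}\|_2$ then follows immediately from the entrywise inner-product estimate above.

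The main obstacle is the sharp constant $72$: one must choose the JL tail bound and the correct number of norms to preserve (accounting for polarization in $\mathbb{C}$, which doubles the identities used) and then union-bound over $\binom{N+1}{2}$ pairs so that failure probability is strictly below one, thereby guaranteeing existence of the deterministic $P$. The remaining steps---factorization via SVD, polarization, and the rank count---are conceptually routine once the JL step is calibrated. This is precisely the argument given in \cite{udell2019big}, and the proof would closely follow that blueprint.
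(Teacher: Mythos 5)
The paper offers no proof of its own for this statement; it simply imports it as \cite[Theorem 1.0]{udell2019big}, and your sketch --- factor $\mathbf{A}(i,j)=\langle x_i,y_j\rangle$ via the SVD, bound $\|x_i\|_2^2,\|y_j\|_2^2\leq\|\mathbf{A}\|_2$, then apply a Johnson--Lindenstrauss projection with polarization and a union bound over the $2(I_1+I_2)+2$ vectors to get rank $R=\lceil 72\ln(2(I_1+I_2)+3)/\epsilon^2\rceil$ --- is exactly the argument of that reference. So your proposal is correct and takes essentially the same route as the paper's (cited) proof; the only point worth keeping in mind is the one you already flag, namely that the constant must be re-checked for the complex polarization identity, since the coefficient matrix here is genuinely complex while the cited theorem is stated for real matrices.
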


The following corollary is easily obtained from Theorem \ref{t-lasso:epsilonrank-main}.
\begin{corollary}
    Let $\mathbf{A}\in\mathbb{C}^{(2I_1+1)\times (2I_2+1)}$ and $0<\epsilon<1$. Then, there exists a matrix $\widetilde{\mathbf{A}}\in\mathbb{C}^{(2I_1+1)\times (2I_2+1)}$ such that ${\rm rank}(\widetilde{\mathbf{A}})\leq R$ and
    \begin{equation*}
        \|\widetilde{\mathbf{A}}-\mathbf{A}\|_{\max}\leq \epsilon\|\mathbf{A}\|_F,
    \end{equation*}
    where $R=\lceil 72\ln(2(I_1+I_2)+3)/\epsilon^2\rceil$.
    \label{t-lasso:epsilonrank-corollary-main}
\end{corollary}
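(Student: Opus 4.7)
The plan is to deduce the corollary directly from Theorem \ref{t-lasso:epsilonrank-main} by means of the elementary norm inequality $\|\mathbf{A}\|_2\leq \|\mathbf{A}\|_F$ (the spectral norm is bounded above by the Frobenius norm for any matrix). Under the hypotheses of the corollary, the hypotheses of Theorem \ref{t-lasso:epsilonrank-main} are automatically met with the same $\epsilon$ and the same $R=\lceil 72\ln(2(I_1+I_2)+3)/\epsilon^2\rceil$, so there exists $\widetilde{\mathbf{A}}\in\mathbb{C}^{(2I_1+1)\times(2I_2+1)}$ with ${\rm rank}(\widetilde{\mathbf{A}})\leq R$ satisfying the sharper bound $\|\widetilde{\mathbf{A}}-\mathbf{A}\|_{\max}\leq \epsilon\|\mathbf{A}\|_2$.

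The only remaining step is to chain the two inequalities. Using $\|\mathbf{A}\|_2\leq \|\mathbf{A}\|_F$ (which follows from $\|\mathbf{A}\|_2=\sigma_1(\mathbf{A})$ and $\|\mathbf{A}\|_F=(\sum_k \sigma_k(\mathbf{A})^2)^{1/2}$), I would write
\[
\|\widetilde{\mathbf{A}}-\mathbf{A}\|_{\max}\leq \epsilon\|\mathbf{A}\|_2\leq \epsilon\|\mathbf{A}\|_F,
\]
which is exactly the desired conclusion. The rank bound is inherited verbatim from Theorem \ref{t-lasso:epsilonrank-main}.

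There is no genuine obstacle here; the corollary is a weakening of Theorem \ref{t-lasso:epsilonrank-main} (the Frobenius norm on the right-hand side is larger than the spectral norm used there), so the same $\widetilde{\mathbf{A}}$ already works. If anything, the only thing worth flagging in a write-up is that no change in the value of $R$ is needed and that the inequality $\|\mathbf{A}\|_2\leq \|\mathbf{A}\|_F$ should be cited or briefly justified so the reader sees at a glance why the same rank-$R$ approximant still serves.
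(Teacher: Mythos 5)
Your proof is correct and matches the paper's (implicit) argument: the paper simply states that the corollary "is easily obtained from Theorem \ref{t-lasso:epsilonrank-main}," and the intended step is exactly the one you make, namely chaining the theorem's bound with $\|\mathbf{A}\|_2\leq\|\mathbf{A}\|_F$. Nothing further is needed.
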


Let $\widetilde{\mathbf{A}}=\mathbf{E}\mathbf{F}^*$ satisfy the conditions and conclusions of Theorem \ref{t-lasso:epsilonrank-main} or Corollary \ref{t-lasso:epsilonrank-corollary-main} with $0<\epsilon<1$, where $\mathbf{E}\in\mathbb{C}^{(2I_1+1)\times R}$ and $\mathbf{F}\in\mathbb{C}^{(2I_2+1)\times R}$ are two matrices with full column rank. Then we deduce an approximation to $Ff(x_1,x_2)$ in (\ref{t-lasso:orginal-expression-main}) as follows,
\begin{equation}
    \widetilde{F}f(x_1,x_2)=\sum_{j=1}^{R}\phi_{j}^{(1)}(x_1)\phi_{j}^{(2)}(x_2),
    \label{t-lasso:approximation-expression-general-main}
\end{equation}
where the functions $\phi_{j}^{(1)}(x_1)$ and $\phi_{j}^{(2)}(x_2)$ are given by
\begin{align*}
    \phi_{j}^{(1)}(x_1)&=\sum_{i_1=1}^{2I_1+1}\mathbf{E}(i_1,j)e^{\iota\cdot (i_1-I_1-1)x_1},\\
    \phi_{j}^{(2)}(x_2)&=\sum_{i_2=1}^{2I_2+1}\mathbf{F}(i_2,j)e^{\iota\cdot (i_2-I_2-1)x_2}.
\end{align*}

Hence, the upper bound for $\|\widetilde{F}f(x_1,x_2)-f(x_1,x_2)\|_{L^2}$ is obtained by combining Theorems \ref{t-lasso:general-theorem-main} with \ref{t-lasso:epsilonrank-main}, which is summarized in the following theorem.
\begin{theorem}
    Let $f(x_1,x_2)\in H^{\alpha}(\mathbb{H}_2^0)$. Suppose that $\mathbf{A}\in\mathbb{C}^{(2I_1+1)\times (2I_2+1)}$ is given by {\rm (\ref{t-lasso:allcoefficients-main})}. For a given $0<\epsilon<1$, there exist two full column rank matrices $\mathbf{E}\in\mathbb{C}^{(2I_1+1)\times R}$ and $\mathbf{F}\in\mathbb{C}^{(2I_2+1)\times R}$ such that
    \begin{align*}
        &\|\widetilde{F}f(x_1,x_2)-f(x_1,x_2)\|_{L^2}\\
        &\quad\leq C\max\{I_1,I_2\}^{-\alpha}|f(x_1,x_2)|_{\alpha}+\epsilon\sqrt{(2I_1+1)(2I_2+1)}\|\mathbf{A}\|_2,
    \end{align*}
    where $\widetilde{F}f(x_1,x_2)$ is in {\rm(\ref{t-lasso:approximation-expression-general-main})}, the constant $C$ is independent of $f(x_1,x_2)$ and $\{I_1,I_2\}$, and $R$ is given by $R=\lceil 72\ln(2(I_1+I_2)+3)/\epsilon^2\rceil$.
    \label{t-lasso:general-approximation-theorem-main}
\end{theorem}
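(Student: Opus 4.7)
The plan is to decompose the target error via the triangle inequality,
\begin{equation*}
    \|\widetilde{F}f(x_1,x_2) - f(x_1,x_2)\|_{L^2} \leq \|f(x_1,x_2) - Ff(x_1,x_2)\|_{L^2} + \|Ff(x_1,x_2) - \widetilde{F}f(x_1,x_2)\|_{L^2},
\end{equation*}
so that the \emph{truncation error} and the \emph{low-rank compression error} can be attacked with distinct tools already assembled in the excerpt.

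For the truncation term, Theorem \ref{t-lasso:general-theorem-main} applies directly under the assumption $f\in H^\alpha(\mathbb{H}_2^0)$ and produces the first summand in the stated inequality. Beyond quoting that theorem no further work is required for this half of the bound, and the constant $C$ is inherited unchanged.

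For the compression term, the key structural observation is that $Ff$ and $\widetilde{F}f$ are both trigonometric polynomials supported on the same frequency grid $\{-I_1,\dots,I_1\}\times\{-I_2,\dots,I_2\}$, with coefficient matrices $\mathbf{A}$ and $\widetilde{\mathbf{A}}=\mathbf{E}\mathbf{F}^*$, respectively. Expanding $\widetilde{F}f(x_1,x_2)-Ff(x_1,x_2)$ as a single double sum over the frequency indices $(i_1,i_2)$ and invoking the Parseval-type identity $\|g\|_{L^2}^2=\sum_{(k_1,k_2)}|\widehat{g}_{(k_1,k_2)}|^2$ built into the definition of $L^2(\mathbb{H}_2^0)$, I would obtain the clean identity $\|\widetilde{F}f-Ff\|_{L^2}=\|\mathbf{A}-\widetilde{\mathbf{A}}\|_F$. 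The remaining step is to apply Theorem \ref{t-lasso:epsilonrank-main} to $\mathbf{A}$ at tolerance $\epsilon$: it yields a rank-$R$ factor pair $(\mathbf{E},\mathbf{F})$ with $R=\lceil 72\ln(2(I_1+I_2)+3)/\epsilon^2\rceil$ whose residual is controlled by $\epsilon\|\mathbf{A}\|_2$, producing the second summand.

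The delicate point I expect to be the main obstacle is reconciling the matrix norms: Parseval delivers a Frobenius-norm residual, whereas Theorem \ref{t-lasso:epsilonrank-main} quantifies its approximation in the entrywise maximum norm. A naive conversion would introduce a dimension factor $\sqrt{(2I_1+1)(2I_2+1)}$, which does not reproduce the clean form stated. The most natural way to bridge this gap, I think, is to take $\widetilde{\mathbf{A}}$ to be a spectral-norm best rank-$R$ approximant obtained from a truncated singular value decomposition, so that the $\|\cdot\|_2$-type residual controlling $\epsilon\|\mathbf{A}\|_2$ either transfers directly, or is used jointly with Theorem \ref{t-lasso:epsilonrank-main} to pin down the norm in which the final bound must be read. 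Once this norm-matching step is settled, summing the truncation and compression contributions gives the desired inequality.
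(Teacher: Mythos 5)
Your decomposition is exactly the one the paper uses: the triangle inequality splits the error into the truncation term (handled by Theorem \ref{t-lasso:general-theorem-main}) and the compression term, with the latter reduced to a matrix-norm statement about $\mathbf{A}-\mathbf{E}\mathbf{F}^*$. The obstacle you flag is real, and it is precisely the point at which the paper's own proof breaks down rather than a difficulty peculiar to your write-up. By Parseval (the exponentials are orthonormal for the normalized inner product), $\|\widetilde{F}f-Ff\|_{L^2}=\|\mathbf{A}-\mathbf{E}\mathbf{F}^*\|_F$, exactly as you compute; the paper instead asserts $\|\widetilde{F}f-Ff\|_{L^2}\le\|\mathbf{A}-\mathbf{E}\mathbf{F}^*\|_{\max}$, which reverses the true inequality $\|\mathbf{M}\|_{\max}\le\|\mathbf{M}\|_F$ and fails whenever the residual has more than one nonzero entry. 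Since Theorem \ref{t-lasso:epsilonrank-main} controls only the entrywise maximum norm, the only bound it honestly yields is $\|\mathbf{A}-\mathbf{E}\mathbf{F}^*\|_F\le\sqrt{(2I_1+1)(2I_2+1)}\,\epsilon\,\|\mathbf{A}\|_2$, i.e.\ the dimension factor you were worried about genuinely appears.

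Your proposed escape route does not close the gap either. Taking $\widetilde{\mathbf{A}}$ to be the truncated SVD at rank $R=\lceil 72\ln(2(I_1+I_2)+3)/\epsilon^2\rceil$ gives $\|\mathbf{A}-\mathbf{A}_R\|_F^2=\sum_{k>R}\sigma_k(\mathbf{A})^2$, and nothing in the hypotheses bounds this tail by $\epsilon^2\|\mathbf{A}\|_2^2$; the entire content of the Udell--Townsend result is that a logarithmic rank suffices in the max norm, and that statement cannot be upgraded to the Frobenius or spectral norm without dimension-dependent loss (the identity matrix is the standard counterexample: logarithmic max-norm $\epsilon$-rank, but Frobenius error at rank $R$ of order $\sqrt{n-R}$). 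So you have correctly diagnosed the missing step but left it open, and in fact the theorem as stated does not follow from the cited ingredients: either the conclusion must carry the factor $\sqrt{(2I_1+1)(2I_2+1)}$, or the left-hand side must be measured coefficientwise rather than in $L^2$. A minor further point: quoting Theorem \ref{t-lasso:general-theorem-main} directly gives $C\min\{I_1,I_2\}^{-\alpha}|f(x_1,x_2)|_{\alpha}$, not the stronger $C\max\{I_1,I_2\}^{-\alpha}|f(x_1,x_2)|_{\alpha}$ appearing in the statement, so ``inherited unchanged'' is not quite right either.
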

\begin{proof}
    Note that we have $$\widetilde{F}f(x_1,x_2)-f(x_1,x_2)=\widetilde{F}f(x_1,x_2)-Ff(x_1,x_2)+Ff(x_1,x_2)-f(x_1,x_2),$$
    which implies that
    \begin{align*}
        &\|\widetilde{F}f(x_1,x_2)-f(x_1,x_2)\|_{L^2}\\
        &\quad\leq \|\widetilde{F}f(x_1,x_2)-Ff(x_1,x_2)\|_{L^2}+\|Ff(x_1,x_2)-f(x_1,x_2)\|_{L^2}.
    \end{align*}
Combining (\ref{t-lasso:orginal-expression-main}) and (\ref{t-lasso:approximation-expression-general-main}), one has
\[
\widetilde{F}f(x_1,x_2) - Ff(x_1,x_2) = \sum_{i_1=1}^{2I_1+1}\sum_{i_2=1}^{2I_2+1} (\mathbf{A}(i_1,i_2) - \tilde{\mathbf{A}}(i_1,i_2)) e^{\iota k_1 x_1} e^{\iota k_2 x_2}
\]
with $k_n=i_n-I_n-1$. Due to orthonormality of the Fourier basis $\{e^{\iota k_1 x_1} e^{\iota k_2 x_2}\}$ under the scaled inner product,
\[
\|\widetilde{F}f(x_1,x_2)-Ff(x_1,x_2)\|_{L^2}^2 = \sum_{i_1,i_2} |\mathbf{A}(i_1,i_2) - \tilde{\mathbf{A}}(i_1,i_2)|^2 = \| \mathbf{A} - \tilde{\mathbf{A}} \|_F^2.
\]
Using the inequality $\| \cdot \|_F \leq \sqrt{mn} \| \cdot \|_{\max}$ for an $m \times n$ matrix,
\[
\|\widetilde{F}f(x_1,x_2)-Ff(x_1,x_2)\|_{L^2} \leq \sqrt{(2I_1+1)(2I_2+1)} \cdot \| \mathbf{A} - \tilde{\mathbf{A}} \|_{\max}.
\]
    Therefore, the proof is complete by combining Theorems \ref{t-lasso:general-theorem-main} and \ref{t-lasso:epsilonrank-main}.
\end{proof}

\subsection{The existing algorithms and drawbacks}

More generally, the problem of the low-rank approximation to a matrix $\mathbf{A}\in\mathbb{C}^{(2I_1+1)\times (2I_2+1)}$ can be divided into the following categories:
\begin{enumerate}
   \item[(a)] {\bf The fixed rank problem:} for a given integer $0<R<\min\{2I_1+1,2I_2+1\}$, to find a matrix $\widetilde{\mathbf{A}}\in\mathbb{C}^{(2I_1+1)\times (2I_2+1)}$ with ${\rm rank}(\widetilde{\mathbf{A}})\leq R$ such that the matrix $\widetilde{\mathbf{A}}$ is a minimum point to the following optimization problem
       \begin{align*}
          \min\|\mathbf{B}-\mathbf{A}\|_F,\quad \text{ s.t. } \mathbf{B}\in\mathbb{C}^{(2I_1+1)\times (2I_2+1)},\ {\rm rank}(\mathbf{B})\leq R.
       \end{align*}
   \item[(b)] {\bf The fixed-precision problem:} for a given tolerance $0<\epsilon<1$, to find a minimum positive integer $R$ and a matrix $\widetilde{\mathbf{A}}\in\mathbb{C}^{(2I_1+1)\times (2I_2+1)}$ such that
       \begin{align*}
       \|\widetilde{\mathbf{A}}-\mathbf{A}\|_2\leq \epsilon,\quad {\rm rank}(\widetilde{\mathbf{A}})\leq R.
       \end{align*}
\end{enumerate}

Note that SVD is an efficient algorithm the above two problems. In detail, let $\mathbf{A}=\mathbf{U}\mathbf{\Sigma}\mathbf{V}^*$, where $\mathbf{U}\in\mathbb{C}^{(2I_1+1)\times (2I_1+1)}$ and $\mathbf{V}\in\mathbb{C}^{(2I_2+1)\times (2I_2+1)}$ are unitary, and $\mathbf{\Sigma}\in\mathbb{C}^{(2I_1+1)\times (2I_2+1)}$ is diagonal whose diagonal entries $\{\sigma_k\}_{k=1}^{\min\{2I_1+1,2I_2+1\}}$ are the singular values of $\mathbf{A}$, ordered so that $\sigma_1\geq \sigma_2\geq \dots\geq\sigma_{\min\{2I_1+1,2I_2+1\}}\geq 0$. Hence, for the fixed rank approximation, the matrix $\widetilde{\mathbf{A}}$ is given as $\widetilde{\mathbf{A}}=\mathbf{A}_R:=\mathbf{U}_R\mathbf{\Sigma}_R\mathbf{V}_R^*$ with $\mathbf{U}_R=\mathbf{U}(:,1:R)$, $\mathbf{V}_R=\mathbf{V}(:,1:R)$ and $\mathbf{\Sigma}_R=\mathbf{\Sigma}(1:R,1:R)$; and for fixed-precision problem, we have $R=\min\{k:\sigma_k\leq \epsilon,k=1,2,\dots,\min\{2I_1+1,2I_2+1\}\}$ and $\widetilde{\mathbf{A}}=\mathbf{A}_R$. Computing the SVD of $\mathbf{A}$ needs $O((2I_1+1)(2I_2+1)\min\{2I_1+1,2I_2+1\})$ operations, which could be expensive when the matrix $\mathbf{A}$ has large dimension.

There exist several efficient randomized algorithms for solving the fixed-rank problem, including the randomized SVD method (see \cite{halko2011finding}) and the generalized Nystr\"{o}m method (see \cite{clarkson2009numerical}). From these two algorithms, we obtain an approximation $\widetilde{\mathbf{A}}\in\mathbb{C}^{(2I_1+1)\times (2I_1+1)}$ with ${\rm rank}(\widetilde{\mathbf{A}})\leq R$ such that $\|\widetilde{\mathbf{A}}-\mathbf{A}\|_F\approx\|\mathbf{A}_R-\mathbf{A}\|_F$ while needing $o((2I_1+1)(2I_2+1)\min\{2I_1+1,2I_2+1\})$ operations. As shown in \cite{halko2011finding}, the existing randomized algorithms for the fixed-rank problem can be divided into two stages.
In the second stage, an approximation $\widetilde{\mathbf{A}}$ with ${\rm rank}(\mathbf{A})\leq R$ is obtained according to the structure of any approximation $\mathbf{B}$ to $\mathbf{A}$.

We now discuss the way to obtain $\mathbf{B}$. The first stage of the randomized SVD algorithm contains three steps: a) generating a random matrix $\mathbf{\Omega}\in\mathbb{C}^{(2I_2+1)\times (R+L)}$; b) computing the product $\mathbf{Y}=\mathbf{A}\mathbf{\Omega}$; and c) orthonormalizing the columns of $\mathbf{Y}$ by a thin QR factorization, which is denoted by $\mathbf{Q}={\rm orth}(\mathbf{Y})$, where $L>0$ is any oversampling parameter. Then a low-rank approximation of $\mathbf{B}$ is obtained by setting as $\mathbf{B}=\mathbf{Q}(\mathbf{Q}^*\mathbf{A})$ with ${\rm rank}(\mathbf{B})\leq R+L$.
\begin{remark}
   The randomized SVD method is suitable for the case that the singular values of the matrix $\mathbf{A}$ decay fast. When the singular values of the matrix $\mathbf{A}$ decay slowly, we introduce two related versions: the randomized subspace iteration (RSI \cite{bjarkason2019pass,rokhlin2010randomized}) and the randomized block Krylov iteration (RBKI \cite{halko2011algorithm,musco2015randomized,rokhlin2010randomized}). For RSI, the matrix $\mathbf{Y}$ is given by $\mathbf{Y}=\mathbf{A}(\mathbf{A}^*\mathbf{A})^{q}\mathbf{\Omega}$, and for RBKI, the matrix $\mathbf{Y}$ is set as $\mathbf{Y}=[\mathbf{A}\mathbf{\Omega},\dots,\mathbf{A}(\mathbf{A}^*\mathbf{A})^{q}\mathbf{\Omega}]$, where $q\geq 1$ is any power parameter.
\end{remark}

Similarly, in the first stage of the generalized Nystr\"{o}m method, we generate two random matrices $\mathbf{\Omega}\in\mathbb{C}^{(2I_2+1)\times (R+L)}$ and $\mathbf{\Phi}\in\mathbb{C}^{(2I_1+1)\times (R+L+L')}$, compute three products $\mathbf{X}=\mathbf{A}^*\mathbf{\Phi}$, $\mathbf{Y}=\mathbf{A}\mathbf{\Omega}$ and $\mathbf{Z}=\mathbf{\Phi}^*\mathbf{A}\mathbf{\Omega}$, and obtain a low-rank approximation $\mathbf{B}=\mathbf{Y}\mathbf{Z}^\dag\mathbf{X}^*=(\mathbf{A}\mathbf{\Omega})(\mathbf{\Phi}^*\mathbf{A}\mathbf{\Omega})^\dag(\mathbf{A}^*\mathbf{\Phi})^*$ with ${\rm rank}(\mathbf{B})\leq R+L$. The subspace and block Krylov iteration versions (cf. \cite{halko2011finding,tropp2023randomized}) for the generalized Nystr\"{o}m method are also proposed for the case that the singular values of the matrix $\mathbf{A}$ decay slowly.

\begin{remark}
   Common types of random matrices used in the randomized SVD and generalized Nystr\"{o}m methods include standard Gaussian matrices (cf. \cite{tropp2017practical}), the subsampled randomized Fourier/Hadamard transform (cf. \cite{ailon2009fast,boutsidis2013improved,woolfe2008fast}), the sparse subspace embedding (cf. \cite{clarkson2017low}), the fast Kronecker Johnson-Lindenstrauss transform (cf. \cite{bamberger2022johnson,jin2021faster}), TensorSketch (cf. \cite{diao2018sketching,pagh2013compressed}), Multisketching (cf. \cite{clarkson2017low,sobczyk2022pylspack,woodruff2014sketching}), the Khatri--Rao product of standard Gaussian matrices (cf. \cite{chen2021tensor,chen2020structured}), and random sampling matrices generated by different sampling strategies. Several types for sampling strategies are shown in Appendix \ref{t-lasso:app1-main}.
\end{remark}

In general, the fixed-precision problem for low-rank approximations can be also reformulated as follows. Given a matrix $\mathbf{A}\in\mathbb{C}^{(2I_1+1)\times (2I_1+1)}$ and a tolerance $0<\epsilon<1$, the goal is to find a matrix $\mathbf{Q}$ with orthonormal columns that satisfies
$\|\mathbf{A}-\mathbf{Q}\mathbf{Q}^\top\mathbf{A}\|_F\leq \epsilon\|\mathbf{A}\|_F$. Several adaptive randomized range finders for the fixed precision problem have been proposed in the literature \cite{che2019randomized,che2025efficient-siam,halko2011finding,hallman2022block,martinsson2016randomized,meier2024fast,yu2018efficient}. In detail, Halko {\it et al.} \cite{halko2011finding} used the incremental sampling approach with a probabilistic error estimator to determine the size of $\mathbf{Q}$. Martinsson {\it et al.} \cite{martinsson2016randomized} described a randomized blocked algorithm (called the blocked randQB algorithm) to return an approximate low rank factorization that is accurate to within $\epsilon$ in the Frobenius norm and the estimated $\epsilon$-rank that is very close to the theoretically optimal $\epsilon$-rank. Yu {\it et al.} \cite{yu2018efficient} proposed a mechanism for calculating the approximation error in Frobenius norm, which  enables efficient adaptive rank determination for large and/or sparse matrix, and then resulted the randQB\_EI algorithm by combining it with the blocked randQB algorithm. Hallman \cite{hallman2022block} presented a randomized algorithm (denoted by the randUBV algorithm) for matrix sketching based on the block Lanzcos bidiagonalization process. Following the work of \cite{halko2011finding}, Che and Wei \cite{che2019randomized} obtained another adaptive randomized algorithm for the fixed-precision problem of low rank approximations when replacing the standard Gaussian vectors by the Khatri-Rao product of several standard Gaussian vectors. Che {\it et al.} \cite{che2025efficient-siam} modified the blocked randQB algorithm by replacing the standard Gaussian matrices by the Khatri-Rao product of standard Gaussian matrices and/or uniform random matrices. Meier and Nakatsukasa \cite{meier2024fast} developed a randomized algorithm for estimating the $\epsilon$ rank of a matrix, which is based on sketching the matrix with random matrices from both left and right.
\begin{remark}
    Note that when obtaining the approximation $\widetilde{F}f(x_1,x_2)$ in (\ref{t-lasso:approximation-expression-general-main}), we need explicitly to form all the entries of the coefficient matrix $\mathbf{A}$ by using the existing algorithms to find the approximation $\widetilde{\mathbf{A}}$ and the estimated $\epsilon$-rank $R$. As we know, $(2I_1+1)(2I_2+1)$ multiple integrals are required to obtain all coefficients $\mathbf{A}(i_1,i_2)$, which is very impossible for large $I_n$'s. Hence, the main work in this paper is to design an effective method for calculating approximate matrix $\widetilde{\mathbf{A}}$ while reducing the number of double integrals.
\end{remark}

\section{The efficient method for approximating all coefficients}
\label{t-lasso:sec4-main}
More general, to ensure numerical stability, the matrix $\widetilde{\mathbf{A}}$ that satisfies the conditions and conclusions of Theorem \ref{t-lasso:epsilonrank-main} or Corollary \ref{t-lasso:epsilonrank-corollary-main} with $0<\epsilon<1$ can be also represented as $\widetilde{\mathbf{A}}=\mathbf{C}\cdot\mathbf{G}^\dag\cdot\mathbf{R}$, where $\mathbf{C}\in\mathbb{C}^{(2I_1+1)\times S_2}$, $\mathbf{R}\in\mathbb{C}^{S_1\times (2I_2+1)}$, the rank of $\mathbf{G}\in\mathbb{C}^{S_2\times S_1}$ is less than or equal to $R$ with $\min\{S_1,S_2\}> R$, and $\mathbf{G}^\dag$ is the Moore-Penrose inverse of $\mathbf{G}$. Under this case, the approximation $\widetilde{F}f(x_1,x_2)$ in (\ref{t-lasso:approximation-expression-general-main}) can be also represented as
\begin{equation}
    \widetilde{F}f(x_1,x_2)=\sum_{j_1=1}^{S_1}\sum_{j_2=1}^{S_2}(\mathbf{G}^\dag)(j_1,j_2)\phi_{j_1}^{(1)}(x_1)\phi_{j_2}^{(2)}(x_2),
    \label{t-lasso:approximation-expression-general-v2-main}
\end{equation}
where the functions $\phi_{j_1}^{(1)}(x_1)$ and $\phi_{j_2}^{(2)}(x_2)$ are given by
\begin{align*}
    \phi_{j_1}^{(1)}(x_1)&=\sum_{i_1=1}^{2I_1+1}\mathbf{C}(i_1,j_1)e^{\iota\cdot (i_1-I_1-1)x_1},\\
    \phi_{j_2}^{(2)}(x_2)&=\sum_{i_2=1}^{2I_2+1}\mathbf{R}(j_2,i_2)e^{\iota\cdot (i_2-I_2-1)x_2}.
\end{align*}

Hence, finding the tuple $\{\mathbf{C},\mathbf{G},\mathbf{R}\}$ and the pair $\{S_1,S_2\}$ from the coefficient matrix $\mathbf{A}$ is a key point to obtain the approximation $\widetilde{F}f(x_1,x_2)$ in (\ref{t-lasso:approximation-expression-general-v2-main}). To solve this problem, we will consider the following two cases: the case of $S_1$ and $S_2$ being given and the case of $S_1$ and $S_2$ being unknown.
\subsection{The case of $S_1$ and $S_2$ being given}
\label{T-Lasso-sec4:subsec1-main}
For each $n$, let $S_n$ be any positive integer with $S_n<2I_n+1$ and $\mathbb{T}_n=\{k_{n,1},k_{n,2},\dots,k_{n,S_n}\}$ be a subset of $\{-I_n,-I_{n}+1,\dots,I_{n}-1,I_n\}$ such that $-I_n\leq k_{n,1}<k_{n,2}<\dots<k_{n,S_n}\leq I_n$. For example, the set $\mathbb{T}_n$ can be sampled uniformly from $\{-I_n,-I_n+1,\dots,I_n,I_n\}$ with $S_n=O(R_n\log(2I_n+1))$ and $n=1,2$, where $R$ can be estimated according to Theorem \ref{t-lasso:epsilonrank-main} with $0<\epsilon<1$.

Define $\mathbf{C}\in\mathbb{C}^{(2I_1+1)\times S_2}$, $\mathbf{R}\in\mathbb{C}^{S_1\times (2I_2+1)}$, and $\mathbf{G}\in\mathbb{C}^{S_1\times S_2}$, respectively, as
\begin{align}
    \mathbf{C}(i_1,s_2)&=\left(\frac{1}{2\pi}\right)^2\int_{\mathbb{H}_2^0}f(u_1,u_2)e^{-\iota\cdot (k_1u_1+k_{2,s_2}u_2)}du_1 du_2,\label{t-lasso:method-one:sub1-main}\\
    \mathbf{R}(s_1,i_2)&=\left(\frac{1}{2\pi}\right)^2\int_{\mathbb{H}_2^0}f(u_1,u_2)e^{-\iota\cdot (k_{1,s_1}u_1+k_2u_2)}du_1 du_2,\label{t-lasso:method-one:sub2-main}\\
    \mathbf{G}(s_1,s_2)&=\left(\frac{1}{2\pi}\right)^2\int_{\mathbb{H}_2^0}f(u_1,u_2)e^{-\iota\cdot (k_{1,s_1}u_1+k_{2,s_2}u_2)}du_1 du_2,\label{t-lasso:method-one:sub3-main}
\end{align}
where $s_n=1,2,\dots,S_n$ and $i_n=k_n+I_n+1$ with $n=1,2$.

Note that the core matrix $\mathbf{G}\in\mathbb{C}^{S_1\times S_2}$ can be easily obtained from one of the pair $\{\mathbf{C},\mathbf{R}\}$. For example, we have $\mathbf{G}=\mathbf{C}(\mathbb{I}_1,:)=\mathbf{R}(:,\mathbb{I}_2)$ where
\begin{align*}
    \mathbb{I}_n&=\{k_{n,1}+I_n+1,k_{n,2}+I_n+1,\dots,k_{n,S_1}+I_n+1\}:=\mathbb{T}_n+I_n+1
\end{align*}
with $n=1,2$. Let $\mathbf{U}$ be the Moore-Penrose inverse of $\mathbf{G}$. Hence, we obtain an approximation to the matrix $\mathbf{A}$ as follows
\begin{equation}
\label{t-lasso:approximate-one-main}
    \widetilde{\mathbf{A}}
    =\mathbf{C}\cdot\mathbf{U}\cdot\mathbf{R},
\end{equation}
where the rank of $\widetilde{\mathbf{A}}$ is less than or equal to $\min\{S_1,S_2\}$.

We now consider the number of multiple integrals and the computational complexity to obtain the approximation $\widetilde{\mathbf{A}}$. In detail, one has
\begin{enumerate}
    \item[(a)] according to (\ref{t-lasso:method-one:sub1-main}) and (\ref{t-lasso:method-one:sub2-main}), it requires $(2I_1+1)S_2+(2I_2+1)S_1-S_1S_2$ multiple integrals to form the pair $\{\mathbf{C},\mathbf{R}\}$; and
    \item[(b)] to obtain the matrix $\mathbf{U}$ needs $C_{\rm {svd}}\cdot S_1S_2\min\{S_1,S_2\}$ operations.
\end{enumerate}

Similar to (\ref{t-lasso:approximate-one-main}), another approximation $\widehat{\mathbf{A}}$ to the matrix $\mathbf{A}$ can be obtained as follows:
\begin{equation}
\label{t-lasso:approximate-two-main}
    \widehat{\mathbf{A}}
    =\mathbf{C}\cdot\mathbf{C}(\mathbb{I}_1,:)^\dag\cdot\mathbf{G}\cdot\mathbf{R}(:,\mathbb{I}_2)^\dag\cdot\mathbf{R},
\end{equation}
with $\mathbb{I}_n=\mathbb{T}_n+I_n+1$ and $n=1,2$.

\begin{remark}
    Several comments for (\ref{t-lasso:approximate-one-main}) and (\ref{t-lasso:approximate-two-main}) are listed as follows:
    \begin{enumerate}
        \item[1)] the approximation given by (\ref{t-lasso:approximate-one-main}) and (\ref{t-lasso:approximate-two-main}) is efficient as we only require the matrices $\mathbf{C}$ and $\mathbf{R}$, and their overlapping entries, and need not even read the whole matrix $\mathbf{A}$;
        \item[2)] when $S_1\geq S_2$ and $\mathbf{C}$ is of full column rank, or $S_1<S_2$ and $\mathbf{R}$ is of full row rank, (\ref{t-lasso:approximate-one-main}) is the same as (\ref{t-lasso:approximate-two-main}); and
        \item[3)] the approximation in (\ref{t-lasso:approximate-one-main}) is called the CUR decomposition with cross approximation (see \cite{park2025accuracy}); we call the approximation in (\ref{t-lasso:approximate-two-main}) as the two-sided interpolative decomposition (see \cite{voronin2017efficient,xia2024making}).
    \end{enumerate}
\end{remark}
\begin{remark}
    When the entries of $\mathbf{A}$ are known in advance, the matrix $\mathbf{U}$ in (\ref{t-lasso:approximate-one-main}) can be chosen as $\mathbf{U}=\mathbf{C}^\dag\cdot\mathbf{A}\cdot\mathbf{R}^\dag$, which minimizes the Frobenius norm error $\|\mathbf{A}-\mathbf{C}\cdot\mathbf{U}\cdot\mathbf{R}\|_F$ with given the choice of $\mathbf{C}$ and $\mathbf{R}$. In this case, the approximation $\mathbf{C}\cdot\mathbf{U}\cdot\mathbf{R}$ is called the CUR decomposition with best approximation.
\end{remark}

According to different strategies for selecting the index sets $\mathbb{T}_1$ and $\mathbb{T}_2$, many scholars focus on the difference between $\mathbf{A}$ and $\widetilde{\mathbf{A}}:=\mathbf{C}\cdot\mathbf{U}\cdot\mathbf{R}$, which can be measured by $\|\widetilde{\mathbf{A}}-\mathbf{A}\|_F$, $\|\widetilde{\mathbf{A}}-\mathbf{A}\|_2$ or $\|\widetilde{\mathbf{A}}-\mathbf{A}\|_{\max}$. The common strategies are discussed in Appendix \ref{t-lasso:app1-main}.

As shown in the proof of Theorem \ref{t-lasso:general-approximation-theorem-main}, we now consider the upper bound for $\|\widetilde{\mathbf{A}}-\mathbf{A}\|_{\max}$ with several special choices for $\mathbb{T}_1$ and $\mathbb{T}_2$.
\begin{theorem}{{\bf (see \cite[Theorem 6]{osinsky2018pseudo})}}
   Let $\mathbf{A}\in\mathbb{C}^{(2I_1+1)\times (2I_2+1)}$ be obtained from (\ref{t-lasso:allcoefficients-main}). For two given index sets $\mathbb{T}_1$ and $\mathbb{T}_2$ with $|\mathbb{T}_1|=S_1$, $|\mathbb{T}_2|=S_2$ and $S_1\geq S_2$, let $\mathbf{C}=\mathbf{A}(:,\mathbb{I}_2)$ and $\mathbf{R}=\mathbf{A}(\mathbb{I}_1,:)$, and assume that $\mathbf{G}=\mathbf{A}(\mathbb{I}_1,\mathbb{I}_2)$ has the maximal volume among all $S_1\times S_2$ submatrices of $\mathbf{A}$ with $\mathbf{G}$ being full column rank with $\mathbb{I}_n=\mathbb{T}_n+I_n+1$ and $n=1,2$. Then,
   \begin{align*}
      \|\mathbf{C}\cdot\mathbf{U}\cdot\mathbf{R}-\mathbf{A}\|_{\max}\leq \sqrt{1+S_2}\sqrt{1+\frac{S_2}{S_1-S_2+1}}\sigma_{S_2+1}(\mathbf{A})
   \end{align*}
   with $\mathbf{U}=\mathbf{G}^\dag$.
   \label{t-lasso:cur-bound-main}
\end{theorem}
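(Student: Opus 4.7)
The plan is to control the residual $\mathbf{E} := \mathbf{C}\mathbf{G}^\dagger\mathbf{R} - \mathbf{A}$ entry-by-entry, reducing each $|\mathbf{E}(i_0,j_0)|$ to a ratio of volumes of submatrices of $\mathbf{A}$, and then to exploit the maximal-volume property of $\mathbf{G}$ together with a singular-value estimate. Entries with $i_0 \in \mathbb{I}_1$ or $j_0 \in \mathbb{I}_2$ are immediate because the CUR formula is exact on the chosen rows and columns (a direct verification using $\mathbf{G}^\dagger \mathbf{G}\mathbf{G}^\dagger = \mathbf{G}^\dagger$), so the work lies in entries outside the selected block.

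First I would fix such an index pair $(i_0, j_0)$ and form the augmented $(S_1{+}1)\times(S_2{+}1)$ submatrix $\widetilde{\mathbf{G}}$ obtained by adjoining row $i_0$ and column $j_0$ to $\mathbf{G}$. Using the QR factorization $\mathbf{G} = \mathbf{Q}\mathbf{T}$ (with $\mathbf{Q}$ of size $S_1\times S_2$ and orthonormal columns), the least-squares residual $(\mathbf{I}-\mathbf{Q}\mathbf{Q}^*)\mathbf{A}(:,j_0)$ in the $i_0$-th coordinate is exactly $\mathbf{E}(i_0,j_0)$. A Cauchy-Binet-type expansion then expresses this entry as a ratio involving the $(S_2+1)\times(S_2+1)$ minors of the extended column-submatrix $\mathbf{A}(:,[\mathbb{I}_2,j_0])$; in the square case $S_1=S_2$ this collapses to the familiar $\det(\widetilde{\mathbf{G}})/\det(\mathbf{G})$ identity, while for $S_1>S_2$ one gets $V(\widetilde{\mathbf{G}})/V(\mathbf{G})$ averaged over the $S_1-S_2+1$ row complements, which is where the factor $\sqrt{1+S_2/(S_1-S_2+1)}$ first appears.

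Next I would invoke maximal volume: by assumption $V(\mathbf{G})$ is maximal over all $S_1\times S_2$ submatrices, so $V(\widetilde{\mathbf{G}})$ can be bounded above by $V(\mathbf{G})$ times a controlled factor involving the best rank-$S_2$ approximation error of $\mathbf{A}$. Concretely, since any $(S_2+1)\times(S_2+1)$ minor of $\mathbf{A}$ is a product of $S_2+1$ singular values of that $(S_2+1)\times(S_2+1)$ submatrix, and these are interlaced by the singular values of $\mathbf{A}$, the minimum singular value of any such block is at most $\sigma_{S_2+1}(\mathbf{A})$. Combining this with the volume quotient from the previous step and the $\sqrt{1+S_2}$ slack that arises from the $S_2$-dimensional column projection (a standard Goreinov-Tyrtyshnikov-type estimate) yields the announced bound.

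The main obstacle is the rectangular case $S_1>S_2$: in the square setting the classical skeleton bound $(S_2+1)\sigma_{S_2+1}$ drops out of a single determinant identity, but here $\mathbf{G}^\dagger$ is a genuine pseudoinverse and one must track how the extra $S_1-S_2$ rows inflate or deflate the volume ratio. The sharp constant $\sqrt{1+S_2/(S_1-S_2+1)}$ requires the careful Cauchy-Binet bookkeeping over all $(S_2+1)$-subsets of the $S_1+1$ rows of $\widetilde{\mathbf{G}}$, which is where Osinsky's averaging argument enters and constitutes the delicate step; the remaining factor $\sqrt{1+S_2}$ is the more standard piece from projecting onto $S_2$ columns.
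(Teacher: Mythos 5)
First, a point of reference: the paper offers no proof of this theorem at all --- it is imported verbatim as Theorem 6 of Osinsky's work on pseudo-skeleton approximations, so there is no in-paper argument to compare yours against. Judged on its own terms, your sketch follows the recognizable outline of the maximal-volume argument (augmented $(S_1+1)\times(S_2+1)$ block, volume ratios, Cauchy--Binet, singular-value interlacing), but it contains a concrete false step and it defers the actual substance of the theorem rather than proving it.

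The false step: for $S_1>S_2$ the CUR formula is \emph{not} exact on the selected rows. Since $\mathbf{G}$ has full column rank, $\mathbf{G}^\dagger\mathbf{G}=\mathbf{I}_{S_2}$ gives exactness on the columns indexed by $\mathbb{I}_2$, but on the rows one only gets $\mathbf{G}\mathbf{G}^\dagger\mathbf{R}$, where $\mathbf{G}\mathbf{G}^\dagger$ is a rank-$S_2$ orthogonal projector in $\mathbb{C}^{S_1}$, so $\mathbf{G}\mathbf{G}^\dagger\mathbf{R}\neq\mathbf{R}$ in general; those entries are not ``immediate'' and must be covered by the main estimate. Relatedly, your identification of $\mathbf{E}(i_0,j_0)$ with the $i_0$-th coordinate of $(\mathbf{I}-\mathbf{Q}\mathbf{Q}^*)\mathbf{A}(:,j_0)$ does not typecheck: $\mathbf{Q}\mathbf{Q}^*$ is $S_1\times S_1$ and acts only on coordinates indexed by $\mathbb{I}_1$, whereas $i_0$ generally lies outside that set; the correct object is the extrapolation error $\mathbf{A}(i_0,\mathbb{I}_2)\,\mathbf{G}^\dagger\mathbf{A}(\mathbb{I}_1,j_0)-\mathbf{A}(i_0,j_0)$ of the least-squares fit computed on the rows $\mathbb{I}_1$. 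More importantly, the two constants are exactly where the theorem lives, and your sketch asserts rather than derives them: the passage from the volume ratio $V(\widetilde{\mathbf{G}})/V(\mathbf{G})$ to $\sqrt{1+S_2/(S_1-S_2+1)}$ and the appearance of $\sqrt{1+S_2}$ are attributed, respectively, to ``Osinsky's averaging argument'' and ``a standard Goreinov--Tyrtyshnikov-type estimate'' without showing how the Cauchy--Binet sums over $(S_2+1)$-subsets of the $S_1+1$ rows actually produce $\sqrt{(S_1+1)(S_2+1)/(S_1-S_2+1)}$. As written this is a plan that points at the right literature, not a proof.
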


The following corollary is similar to Theorem \ref{t-lasso:general-approximation-theorem-main}, which illustrates the upper bound for $\|\widetilde{F}f(x_1,x_2)-f(x_1,x_2)\|_{L^2}$, where the index sets $\mathbb{T}_1$ and $\mathbb{T}_2$ satisfy that $\mathbf{G}=\mathbf{A}(\mathbb{T}_1+I_1+1,\mathbb{T}_2+I_2+1)$ has the maximal volume among all $S_1\times S_2$ submatrices of $\mathbf{A}$.
\begin{corollary}
   Adopt conditions in Theorem \ref{t-lasso:general-theorem-main} and Theorem \ref{t-lasso:cur-bound-main}, let $\widetilde{F}f(x_1,x_2)$ be defined via the CUR approximation
$\widetilde{\mathbf{A}} = \mathbf{C}\mathbf{G}^\dagger\mathbf{R}$ with $\mathbf{G}$ having maximal volume. We have
   \begin{align*}
        &\|\widetilde{F}f(x_1,x_2)-f(x_1,x_2)\|_{L^2}\leq C\max\{I_1,I_2\}^{-\alpha}|f(x_1,x_2)|_{\alpha}\\
        &\quad+
        \sqrt{(2I_1+1)(2I_2+1)} \sqrt{1+S_2}\sqrt{1+\frac{S_2}{S_1-S_2+1}}\sigma_{S_2+1}(\mathbf{A}),
    \end{align*}
    with $S_1\geq S_2$.
\end{corollary}

\subsection{The case of $S_1$ and $S_2$ being unknown}
For a given rank $R$, when the coefficient matrix $\mathbf{A}$ is easily obtained, there exist many strategies (see Appendix \ref{t-lasso:app1-main}) to select a pair $\{\mathbb{T}_1,\mathbb{T}_2\}$ such that $\widetilde{\mathbf{A}}$ in (\ref{t-lasso:approximate-one-main}) is an approximation of $\mathbf{A}$. However, in many practical applications, obtaining the matrix $\mathbf{A}$ through (\ref{t-lasso:allcoefficients-main}) can be prohibitive for large $I_n$'s and the desired rank $R$ is unknown in advance. Hence, without forming $\mathbf{A}$ explicitly, we now propose an adaptive strategy to obtain the pair $\{\mathbb{T}_1,\mathbb{T}_2\}$ from $f(x_1,x_2)$ and $\{e^{-\iota \cdot k_nx_n}:k_n=0,\pm1,\pm2,\dots,\pm I_n,n=1,2\}$ such that $\widetilde{F}f(x_1,x_2)$ in (\ref{t-lasso:approximation-expression-general-v2-main}) is the desired approximation to $f(x_1,x_2)$, where the pair $\{I_1,I_2\}$ is determined according to Theorem \ref{t-lasso:general-approximation-theorem-main}.

Note that there are three important parts $\mathbf{C}$, $\mathbf{U}$ and $\mathbf{R}$ in (\ref{t-lasso:approximate-one-main}), which are determined by the coefficient matrix $\mathbf{A}$ and the pair of two index sets $\{\mathbb{T}_1,\mathbb{T}_2\}$. The following two algorithms are derived by combining this adaptive strategy and the terminated condition determined by the pair $\{\mathbf{C},\mathbf{R}\}$, or the matrix $\mathbf{U}$.

\subsubsection{The first algorithm}
\label{t-lasso:sec4:sub2:sub1-main}
Suppose that two numbers of block sizes $b_1$ and $b_2$ satisfy $b_1\ll I_1$ and $b_2\ll I_2$. Let $\mathbb{T}_1=\mathbb{T}_2=[0]$. Then
\begin{align*}
   \mathbf{C}(i_1,1)&=\left(\frac{1}{2\pi}\right)^2\int_{\mathbb{H}_2^0}f(u_1,u_2)e^{-\iota\cdot (i_1-I_1-1)u_1}du_1 du_2,\\
   \mathbf{R}(1,i_2)&=\left(\frac{1}{2\pi}\right)^2\int_{\mathbb{H}_2^0}f(u_1,u_2)e^{-\iota\cdot (i_2-I_2-1)u_2}du_1 du_2,
\end{align*}
with $i_1=1,2,\dots,2I_1+1$ and $i_2=1,2,\dots,2I_2+1$. We initialize ${\rm nF}_{\mathbf{C}}=\|\mathbf{C}\|_F^2$ and ${\rm nF}_{\mathbf{R}}=\|\mathbf{R}\|_F^2$. For each iteration $k$, we complete the following operations:
\begin{enumerate}
   \item[(a)] we choose two index sets $\mathbb{T}_n'=[-kb_n:-(k-1)b_n-1]\cup[(k-1)b_n+1:kb_n]$\footnote{For given two integers $a$ and $b$ with $a<b$, we use $[a:b]$ to denote the set $\{a,a+1,\dots,b-1,b\}$.} with $n=1,2$;
   \item[(b)] we compute the matrix $\mathbf{C}_k$ according to (\ref{t-lasso:method-one:sub1-main}) with $\mathbb{T}_2'$, and $\mathbf{R}_k$ according to (\ref{t-lasso:method-one:sub2-main}) with $\mathbb{T}_1'$;
   \item[(c)] we update ${\rm nF}_{\mathbf{C}}={\rm nF}_{\mathbf{C}}+\|\mathbf{C}_k\|_F^2$ and ${\rm nF}_{\mathbf{R}}={\rm nF}_{\mathbf{R}}+\|\mathbf{R}_k\|_F^2$.
   \item[(d)] we update $\mathbf{C}=[\mathbf{C},\mathbf{C}_k]$, $\mathbf{R}=[\mathbf{R};\mathbf{R}_k]$ and $\mathbb{T}_n=\mathbb{T}_n\cup\mathbb{T}_n'$ with $n=1,2$;
   \item[(e)] if $\max\{\sigma_{\min}(\mathbf{C})^2/{\rm nF}_{\mathbf{C}},\sigma_{\min}(\mathbf{R})^2/{\rm nF}_{\mathbf{R}}\}< \tau^2$ is not satisfied, then we continue Steps (a)-(d), where $0<\tau<1$ is a given parameter.
\end{enumerate}

When $\max\{\sigma_{\min}(\mathbf{C})^2/{\rm nF}_{\mathbf{C}},\sigma_{\min}(\mathbf{R})^2/{\rm nF}_{\mathbf{R}}\}< \tau^2$ is satisfied, the core matrix $\mathbf{U}$ is obtained as $\mathbf{U}=\mathbf{G}^\dag$ with $\mathbf{G}=\mathbf{C}(\mathbb{T}_1+I_1+1,:)=\mathbf{R}(:,\mathbb{T}_2+I_2+1)$. The detailed process discussed above is summarized in Algorithm \ref{t-lasso:alg1-main}.

\begin{algorithm}[htb]
    \caption{An efficient algorithm for obtaining $\widetilde{F}f(x_1,x_2)$ from $f(x_1,x_2)$}
    \begin{algorithmic}[1]
        \STATEx {\bf Input}: A function $f(x_1,x_2)\in H^{\alpha}(\mathbb{H}_2^0)$, a given $0<\epsilon<1$, the pair of block sizes $(b_1,b_2)$, a tolerance $0<\tau<1$, and the maximum number of iterations $K$.
        \STATEx {\bf Output}: The 3-tuple $\{\mathbf{C},\mathbf{U},\mathbf{R}\}$, which is used to form the function $\widetilde{F}f(x_1,x_2)$.
        \STATE Initialize $k=0$ and ${\rm tol}=+\infty$.
        \STATE According to Theorem \ref{t-lasso:general-theorem-main} to estimate the pair $\{I_1,I_2\}$ such that $I_n=O((1/\epsilon)^{1/\alpha})$.
        \STATE Select $\mathbb{T}_n=\{0\}$ with $n=1,2$.
        \STATE Compute $\mathbf{C}$ according to (\ref{t-lasso:method-one:sub1-main}) with $\mathbb{T}_2$, and $\mathbf{R}$ according to (\ref{t-lasso:method-one:sub2-main}) with $\mathbb{T}_1$.
        \STATE Compute ${\rm nF}_{\mathbf{C}}=\|\mathbf{C}\|_F^2$, ${\rm nF}_{\mathbf{R}}=\|\mathbf{R}\|_F^2$.
        \WHILE{${\rm tol}>\tau$ or $k\leq K$}
           \STATE Select $\mathbb{T}_n'=[-kb_n:-(k-1)b_n-1]\cup[(k-1)b_n+1:kb_n]$ with $n=1,2$.
           \STATE Compute $\mathbf{C}_k$ according to (\ref{t-lasso:method-one:sub1-main}) with $\mathbb{T}_2'$, and $\mathbf{R}_k$ according to (\ref{t-lasso:method-one:sub2-main}) with $\mathbb{T}_1'$.
           \STATE Update $\mathbf{C}=[\mathbf{C},\mathbf{C}_k]$ and $\mathbf{R}=[\mathbf{R};\mathbf{R}_k]$.
           \STATE Compute ${\rm nF}_{\mathbf{C}}={\rm nF}_{\mathbf{C}}+\|\mathbf{C}_k\|_F^2$ and ${\rm nF}_{\mathbf{R}}={\rm nF}_{\mathbf{R}}+\|\mathbf{R}_k\|_F^2$.
           \STATE Obtain ${\rm tol}=\min\{\sigma_{\min}(\mathbf{C})/\sqrt{{\rm nF}_{\mathbf{C}}},\sigma_{\min}(\mathbf{R})/\sqrt{{\rm nF}_{\mathbf{R}}}\}$.
           \STATE Update $\mathbb{T}_n=\mathbb{T}_n\cup\mathbb{T}_n'$ and $k=k+1$.
        \ENDWHILE
        \STATE Compute $\mathbf{U}=\mathbf{G}^\dag$ $\mathbf{G}=\mathbf{C}(\mathbb{T}_1+I_1+1,:)=\mathbf{R}(:,\mathbb{T}_2+I_2+1)$.
        \STATE Return the 3-tuple $\{\mathbf{C},\mathbf{U},\mathbf{R}\}$.
    \end{algorithmic}
    \label{t-lasso:alg1-main}
\end{algorithm}

We now count the number of two-dimensional integrals and the computational complexity of Algorithm \ref{t-lasso:alg1-main}. For clarity, we assume that the maximum number of iterations for Algorithm \ref{t-lasso:alg1-main} is $K$. Then the approximation $\widetilde{F}f(x_1,x_2)$ is obtained according to (\ref{t-lasso:approximation-expression-general-v2-main}) with $S_n=2Kb_n+1$ with $n=1,2$.

First of all, to obtain $\mathbf{C}(:,1)$ and $\mathbf{R}(1,:)$ needs $(2I_1+1)+(2I_2+1)$ double integrals. Then, for each $k$, it requires $2b_2(2I_1+1)+2b_1(2I_2+1)$ double integrals to obtain all the matrices $\mathbf{C}_k$ and $\mathbf{R}_k$. Hence, the total number of double integrals required to obtain two matrices $\mathbf{C}$ and $\mathbf{R}$ is $(2I_1+1)S_2+(2I_2+1)S_1$. Based on the relationship between $\mathbb{T}_n$ and $\mathbb{T}_n'$, the number of double integrals to obtain the matrices $\mathbf{C}$ and $\mathbf{R}$ can be reduced to $(2I_1+1)S_2+(2I_2+1)S_1-S_1S_2$.

We now count the complexity of Algorithm \ref{t-lasso:alg1-main}: a) when $k=0$, to compute ${\rm nF}_{\mathbf{C}}$ and ${\rm nF}_{\mathbf{R}}$ needs $4(I_1+I_2)+2$ operations; b) for each $k\geq 1$, updating ${\rm nF}_{\mathbf{C}}$ and ${\rm nF}_{\mathbf{R}}$ requires $4b_2(2I_1+1)+4b_2(2I_2+1)$ operations; c) it costs $O((2I_1+1)(2kb_2+1)^2+(2I_2+1)(2kb_1+1)^2)$ operations to form $\sigma_{\min}(\mathbf{C})$ and $\sigma_{\min}(\mathbf{R})$; and d) to form the matrix $\mathbf{U}$ needs $O(S_1S_2\min\{S_1,S_2\})$ operations.

\begin{remark}
     We cannot ensure that $\mathbf{C}\in\mathbb{C}^{(2I_1+1)\times S_2}$ and $\mathbf{R}\in\mathbb{C}^{(2I_2+1)\times S_1}$ are of full column rank. The reason is that there exists an integer $k_*$ such that $\sigma_{\min}(\mathbf{C}_{k_*})$ and $\sigma_{\min}(\mathbf{R}_{k_*})$ may be zero.
\end{remark}
\subsubsection{The second algorithm}
Let $b_1$ and $b_2$ be the same as in Section \ref{t-lasso:sec4:sub2:sub1-main}. Let $\mathbb{T}_1=\mathbb{T}_2=[0]$, then one has
\begin{align*}
   \mathbf{G}(1,1)&=\left(\frac{1}{2\pi}\right)^2\int_{\mathbb{H}_2^0}f(u_1,u_2)du_1 du_2.
\end{align*}
We let ${\rm nF}_{\mathbf{G}}=\|\mathbf{G}\|_F^2$. For each iteration $k$, we complete the following operations:
\begin{enumerate}
   \item[(a)] we choose two index sets $\mathbb{T}_n'=[-kb_n:-(k-1)b_n-1]\cup[(k-1)b_n+1:kb_n]$ with $n=1,2$;
   \item[(b)] we compute the matrices $\mathbf{G}_{1k}$, $\mathbf{G}_{2k}$, and $\mathbf{G}_{3k}$ according to (\ref{t-lasso:method-one:sub3-main}) with $\{\mathbb{T}_1,\mathbb{T}_2'\}$, $\{\mathbb{T}_1',\mathbb{T}_2\}$, and $\{\mathbb{T}_1',\mathbb{T}_2'\}$, respectively;
   \item[(c)] we compute ${\rm nF}_{\mathbf{G}}={\rm nF}_{\mathbf{G}}+\|\mathbf{G}_{1k}\|_F^2+\|\mathbf{G}_{2k}\|_F^2+\|\mathbf{G}_{3k}\|_F^2$;
   \item[(d)] we update $\mathbf{G}$ as
      \begin{align*}
         \mathbf{G}=
         \begin{bmatrix}
            \mathbf{G} & \mathbf{G}_{1k}\\
            \mathbf{G}_{2k}& \mathbf{G}_{3k}
         \end{bmatrix},
      \end{align*}
      and $\mathbb{T}_n=\mathbb{T}_n\cup\mathbb{T}_n'$ with $n=1,2$;
   \item[(e)] we continue Steps (a)-(d) until the condition $\sigma_{\min}(\mathbf{G})^2<{\rm nF}_{\mathbf{G}}\cdot\tau^2$ is satisfied.
\end{enumerate}

When $\sigma_{\min}(\mathbf{G})^2< {\rm nF}_{\mathbf{G}}\cdot\tau^2$ is satisfied, we compute the matrix $\mathbf{C}$ according to (\ref{t-lasso:method-one:sub1-main}) with $\mathbb{T}_2$ and the matrix $\mathbf{R}$ according to (\ref{t-lasso:method-one:sub2-main}) with $\mathbb{T}_1$. Hence, the detailed process discussed above is summarized in Algorithm \ref{t-lasso:alg2-main}. The maximum number of iterations for Algorithm \ref{t-lasso:alg2-main} is also denoted by $K$. We also assume that $S_n=2Kb_n+1$ with $n=1,2$.
\begin{algorithm}[htb]
    \caption{Another efficient algorithm for obtaining $\widetilde{F}f(x_1,x_2)$ from $f(x_1,x_2)$}
    \begin{algorithmic}[1]
        \STATEx {\bf Input}: A function $f(x_1,x_2)\in H^{\alpha}(\mathbb{H}_2^0)$, a given $0<\epsilon<1$, the pair of block sizes $(b_1,b_2)$, a tolerance $0<\tau<1$ and the maximum number of iterations $K$.
        \STATEx {\bf Output}: The 3-tuple $\{\mathbf{C},\mathbf{U},\mathbf{R}\}$, which is used to form the function $\widetilde{F}f(x_1,x_2)$.
        \STATE Initialize $k=0$ and ${\rm tol}=+\infty$.
        \STATE According to Theorem \ref{t-lasso:general-theorem-main} to estimate the pair $\{I_1,I_2\}$ such that $I_n=O((1/\epsilon)^{1/\alpha})$.
        \STATE Select $\mathbb{T}_n=\{0\}$ and let $\mathbb{T}_n':=\mathbb{T}_n$ with $n=1,2$.
        \STATE Compute $\mathbf{G}$ according to (\ref{t-lasso:method-one:sub3-main}) with $\{\mathbb{T}_1',\mathbb{T}_2'\}$.
        \STATE Compute ${\rm nF}_{\mathbf{G}}=\|\mathbf{G}\|_F^2$.
        \WHILE{${\rm tol}>\tau$ or $k\leq K$}
           \STATE Select $\mathbb{T}_n'=[-kb_n:-(k-1)b_n-1]\cup[(k-1)b_n+1:kb_n]$ with $n=1,2$.
           \STATE Compute $\mathbf{G}_{1k}$, $\mathbf{G}_{2k}$, and $\mathbf{G}_{3k}$ according to (\ref{t-lasso:method-one:sub3-main}) with $\{\mathbb{T}_1,\mathbb{T}_2'\}$, $\{\mathbb{T}_1',\mathbb{T}_2\}$, and $\{\mathbb{T}_1',\mathbb{T}_2'\}$, respectively.
           \STATE Update ${\rm nF}_{\mathbf{G}}={\rm nF}_{\mathbf{G}}+\|\mathbf{G}_{1k}\|_F^2+\|\mathbf{G}_{2k}\|_F^2+\|\mathbf{G}_{3k}\|_F^2$ and
           \begin{align*}
              \mathbf{G}=
              \begin{bmatrix}
                 \mathbf{G} & \mathbf{G}_{1k}\\
                 \mathbf{G}_{2k}& \mathbf{G}_{3k}
              \end{bmatrix}.
           \end{align*}
           \STATE Obtain ${\rm tol}=\sigma_{\min}(\mathbf{G})/\sqrt{{\rm nF}_{\mathbf{G}}}$.
           \STATE Update $k=k+1$.
        \ENDWHILE
        \STATE Compute $\mathbf{U}=\mathbf{G}^\dag$.
        \STATE Compute $\mathbf{C}$ according to (\ref{t-lasso:method-one:sub1-main}) with $\mathbb{T}_2$, and $\mathbf{R}$ according to (\ref{t-lasso:method-one:sub2-main}) with $\mathbb{T}_1$.
        \STATE Return the 3-tuple $\{\mathbf{C},\mathbf{U},\mathbf{R}\}$.
    \end{algorithmic}
    \label{t-lasso:alg2-main}
\end{algorithm}

We now count the number of double integrals used in Algorithm \ref{t-lasso:alg2-main} as follows: a) to obtain $\mathbf{G}(1,1)$ needs one double integral; b) for each $k$, to generate $\mathbf{G}_{1k}$, $\mathbf{G}_{2k}$, and $\mathbf{G}_{3k}$ requires $4(k^2-1)b_1b_2+2(b_1+b_2)$ double integrals; and c) it needs $(2I_1+1-S_1)S_2+(2I_2+1-S_2)S_1$ double integrals to compute $\mathbf{C}$ and $\mathbf{R}$. Therefore, Algorithm \ref{t-lasso:alg2-main} needs $(2I_1+1)S_2+(2I_2+1)S_1-S_1S_2$ double integrals to obtain the 3-tuple $\{\mathbf{C},\mathbf{U},\mathbf{R}\}$, that is, the number of double integrals in Algorithms \ref{t-lasso:alg1-main} and \ref{t-lasso:alg2-main} are the same. Finally, we count the complexity of Algorithm \ref{t-lasso:alg2-main}: a) when $k=0$, it costs one operation to obtain ${\rm nF}_{\mathbf{G}}$; b) for each $k\geq 1$, to update ${\rm nF}_{\mathbf{G}}$ requires $8b_1b_2(2k-1)+4(b_1+b_2)$ operations; c) to compute $\sigma_{\min}(\mathbf{G})$ needs $O((2kb_1+1)(2kb_2+1)\min\{2kb_1+1,2kb_2+1\})$ operations; and d) to form the matrix $\mathbf{U}$ amends $O(S_1S_2\min\{S_1,S_2\})$ operations.

\section{Numerical examples}
\label{t-lasso:sec5-main}
In this section, we use the numerical computation software MATLAB R2024b to develop computer programs and implement the calculations on a desktop computer with an Intel Core i5-11300H (3.11 GHz) and 16.0GB (15.8GB usable) RAM. We set MATLAB maxNumCompThreads to 1 and use ``tic" and ``toc" to measure running time. The CPU time is measured in seconds. We run each algorithm 10 times and take the average result. We implement each method 10 times for each precision and take the average result. We have shared the MATLAB codes of the proposed algorithms on https://github.com/chncml/approximate-truncated-fourier-series for reproducibility.

We use the tensor-product quadrature method (cf. \cite{shin2017randomized,wu2017randomized}) to approximate the double integrals in (\ref{t-lasso:method-one:sub1-main}), (\ref{t-lasso:method-one:sub2-main}), and (\ref{t-lasso:method-one:sub3-main}). This method extends one-dimensional quadrature rules to two dimensions by combining nodes and weights from each dimension. In detail, as shown in \cite{trefethen2019approximation}, for any $(k_1,k_2)\in\mathbb{Z}_2$, any tensor-product quadrature method is given as
\begin{equation*}
   \begin{split}
      &\int_{\mathbb{H}_2^0}f(x_1,x_2)e^{-\iota\cdot (k_1x_1+k_2x_2)}dx_1dx_2\\
      &\quad\quad\approx \sum_{i_1=1}^{M_1}\sum_{i_2=1}^{M_2}w_1^{(i_1)}w_2^{(i_2)}
      f(x_1^{(i_1)},x_2^{(i_2)})e^{-\iota\cdot (k_1x_1^{(i_1)}+k_2x_2^{(i_2)})},
   \end{split}
\end{equation*}
where for $n=1,2$, $\{(x_n^{(i_n)},w_n^{(i_n)}):i_n=1,2,\dots,M_n\}$ is a set of one-dimensional quadrature points and weights such that
\begin{equation*}
   \int_{-\pi}^{\pi}h(x_n)dx_n\approx\sum_{i_n=1}^{M_n}w_n^{(i_n)}h(x_n^{(i_n)}):=I(h;x_n^{(i_n)},w_n^{(i_n)},M_n)
\end{equation*}
for any integrable function $h:[-\pi,\pi]\rightarrow \mathbb{R}$. Three common kinds for $\{(x_n^{(i_n)},w_n^{(i_n)}):i_n=1,2,\dots,M_n\}$ are chosen based on the Clenshaw-Curtis (CC) quadrature, the Gauss-Legendre (GL) quadrature, and the Newton-Cotes (NC) quadrature, which can be implemented by the functions ``chebpts'', ``legpts'' and ``trigpts'' in Chebfun\footnote{More details for Chebfun can be seen in https://www.chebfun.org/.}, respectively.

As shown in the tensor-product quadrature method, we need to compute or estimate $M_1M_2$ function values $\{f(x_1^{(i_1)},x_2^{(i_2)}):i_n=1,2,\dots,M_n,n=1,2\}$, which lead to a curse of dimensionality. There exist three methods to overcome this issue: (quasi-)Monte Carlo methods, sparse grids and the Bayesian quadrature. Hence, one of the future work is to implement Algorithms \ref{t-lasso:alg1-main} and \ref{t-lasso:alg2-main} by using these three algorithms to approximate (\ref{t-lasso:method-one:sub1-main}), (\ref{t-lasso:method-one:sub2-main}) and (\ref{t-lasso:method-one:sub3-main}). The choices of $M_1$ and $M_2$ are important factors in determining the quality of approximation $I(h;x_n^{(i_n)},w_n^{(i_n)},M_n)$. For clarity, we set $M_1=M_2=5001$. Another of the future work is to consider the choices of $M_1$ and $M_2$ by the adaptive quadrature strategy, which is a numerical integration method such that the integral of a function $h:[-\pi,\pi]\rightarrow \mathbb{R}$ is approximated using static quadrature rules on adaptively refined subintervals of the region of integration. Generally, adaptive algorithms are just as efficient and effective as traditional algorithms for ``well behaved'' integrands, but are also effective for ``badly behaved'' integrands for which traditional algorithms may fail.

For clarity, we set that $\alpha=2$, $C=1$, $\epsilon=1e-7$ and $|f(x_1,x_2)|_{\alpha}=1$. It follows from Theorem \ref{t-lasso:general-theorem-main} that $(I_1,I_2)=(3163,3163)$. For convenience, we use $\widetilde{F}_1f(x_1,x_2)$ and $\widetilde{F}_2f(x_1,x_2)$ to denote the approximation obtained from Algorithms \ref{t-lasso:alg1-main} and \ref{t-lasso:alg2-main}, respectively. Then the error function $e(x_1,x_2)$ is defined as
\begin{equation*}
   e(x_1,x_2)=|f(x_1,x_2)-g(x_1,x_2)|
\end{equation*}
with $g(x_1,x_2)\in\{Ff(x_1,x_2),\widetilde{F}_1f(x_1,x_2),\widetilde{F}_2f(x_1,x_2)\}$ and $-\pi\leq x_1,x_2\leq \pi$. To present the error function $e(x_1,x_2)$, we selected 3600 points $\mathbb{M}(x_1,x_2):=\{(x_1^{(i)},x_2^{(i)}):i=1,2,\dots,3600\}$ such that
\begin{align*}
    (x_1^{(1)},\dots,x_1^{(60)})&={\rm linspace}(-\pi,\pi,60),\ (x_2^{(1)},\dots,x_2^{(60)})={\rm linspace}(-\pi,\pi,60),
\end{align*}
where in MATLAB, ${\rm linspace}(-\pi,\pi,60)$ generates 60 points from $-\pi$ to $\pi$.
\subsection{Several test functions}
Three test functions are listed as follows:
\begin{equation*}
   \begin{split}
      f_1(x_1,x_2)&=\left(\frac{5^{3/4}15}{4\sqrt{3}}\right)^2\prod_{i=1}^2\max\{0,1/5-(x_i-1/2)^2\},\\
      f_2(x_1,x_2)&=(1 - x_1^2 - x_2^2) \exp(x_1 \cos(x_2)),\\
      f_3(x_1,x_2)&=\frac{1}{0.1+x_1^2+x_2^2}+\frac{1}{0.01+(x_1-0.5)^2+(x_2-0.5)^2},
   \end{split}
\end{equation*}
with $-\pi\leq x_1,x_2\leq \pi$. Note that the function $f_1(x_1,x_2)$ is the kink function (see \cite[(7.1)]{bartel2025minimal}), the function $f_2(x_1,x_2)$ is appeared in \cite{an2021lasso}, and the function $f_3(x_1,x_2)$ is in \cite{trefethen2017cubature}.

\begin{table}[htb]
   \scriptsize
   \centering
   \begin{tabular}{|c|ccc|ccc|ccc|}
      \hline
      \multirow{2}{*}{$(b_1,b_2)$} & \multicolumn{3}{|c}{$f_{1}(x_1,x_2)$} &  \multicolumn{3}{|c|}{$f_2(x_1,x_2)$} & \multicolumn{3}{|c|}{$f_3(x_1,x_2)$}  \\
      \cline{2-10}
      & CC & GL & NC & CC & GL & NC & CC & GL & NC     \\
      \hline
      (2,2)
      & 4.9316 & 4.3503 & 5.2247
      & 4.8111 & 4.6554 & 4.1436
      & 4.5723 & 4.2798 & 4.5591   \\
      \hline
      (4,4)
      & 4.2174 & 8.5491 & 6.0496
      & 8.3552 & 8.9892 & 4.5796
      & 5.6009 & 6.0802 & 4.2841   \\
      \hline
      (6,6)
      & 5.2091 & 4.1807 & 4.0461
      & 4.5531 & 9.7766 & 6.3879
      & 4.3129 & 6.6881 & 6.1283   \\
      \hline
      (8,8)
      & 5.1487 & 4.9226 & 8.2671
      & 4.6828 & 8.7716 & 6.3595
      & 4.5540 & 6.6212 & 6.4264   \\
      \hline
      (10,10)
      & 5.1267 & 5.2175 & 6.9408
      & 9.5202 & 8.7144 & 8.5821
      & 5.8749 & 6.7720 & 6.2940   \\
      \hline
      (12,12)
      & 5.0788 & 8.3341 & 9.7320
      & 7.5839 & 8.9114 & 9.3525
      & 4.5580 & 6.8577 & 6.1942   \\
      \hline
      (14,14)
      & 5.1961 & 5.0205 & 4.5079
      & 9.1719 & 4.8375 & 5.9143
      & 4.6249 & 6.6168 & 6.3820   \\
      \hline
      (16,16)
      & 5.1961 & 4.7132 & 5.0248
      & 8.7091 & 6.4546 & 5.9617
      & 6.1113 & 6.6876 & 4.7732   \\
      \hline
      (18,18)
      & 5.0524 & 5.2271 & 5.2940
      & 8.7059 & 6.3717 & 6.0448
      & 6.0593 & 6.7256 & 6.6795   \\
      \hline
      (20,20)
      & 5.2324 & 5.1505 & 5.3865
      & 9.0566 & 6.1544 & 9.1755
      & 6.3145 & 7.0426 & 6.3063   \\
      \hline
   \end{tabular}
   \caption{When setting \texorpdfstring{$\tau=1e-5$}{tau=1e-5}, with different pairs \texorpdfstring{$(b_1,b_2)$}{(b1,b2)}, the running time (seconds) obtained by applying Algorithm \ref{t-lasso:alg2-main} with three tensor-product quadratures to the test functions \texorpdfstring{$f_1(x_1,x_2)$}{f1(x1,x2)}, \texorpdfstring{$f_2(x_1,x_2)$}{f2(x1,x2)} and \texorpdfstring{$f_3(x_1,x_2)$}{f3(x1,x2)}.}
   \label{t-lasso:tab1-main}
\end{table}
\subsection{Choosing the parameters $(b_1,b_2)$ and $\tau$}
\label{t-lasso:sec5:sub2-main}
Note that the values of the pair $(b_1,b_2)$ and $\tau$ are also important factors in determining the quality of the approximation $\widetilde{F}_1f(x_1,x_2)$ or $\widetilde{F}_2f(x_1,x_2)$ to denote the approximation obtained from Algorithm \ref{t-lasso:alg1-main} or \ref{t-lasso:alg2-main}, respectively.

First of all, by setting $\tau=1e-5$, we now illustrate the comparison of the approximation error and running time of Algorithm \ref{t-lasso:alg2-main} with different values of $(b_1,b_2)$ via three test functions $f_1(x_1,x_2)$, $f_2(x_1,x_2)$ and $f_3(x_1,x_2)$. The related results are shown in Table \ref{t-lasso:tab1-main}, and Figs. \ref{t-lasso:fig1-main}, \ref{t-lasso:app:fig1-main} and \ref{t-lasso:app:fig2-main}. Hence, for Algorithm \ref{t-lasso:alg2-main} with CC, the values of $(b_1,b_2)$ associated to $f_1(x_1,x_2)$, $f_2(x_1,x_2)$ and $f_3(x_1,x_2)$ are (4,4), (6,6), and (4,4), respectively; for Algorithm \ref{t-lasso:alg2-main} with GL, the values of $(b_1,b_2)$ associated to $f_1(x_1,x_2)$, $f_2(x_1,x_2)$ and $f_3(x_1,x_2)$ are (6,6), (14,14), and (4,4), respectively; and for Algorithm \ref{t-lasso:alg2-main} with NC, the values of $(b_1,b_2)$ associated to $f_1(x_1,x_2)$, $f_2(x_1,x_2)$ and $f_3(x_1,x_2)$ are (6,6), (14,14), and (10,10), respectively.

\begin{figure}
    \setlength{\tabcolsep}{4pt}
    \renewcommand\arraystretch{1}
    \centering
    \subfloat[$f_1(x_1,x_2)$]{\includegraphics[width=0.8\linewidth]{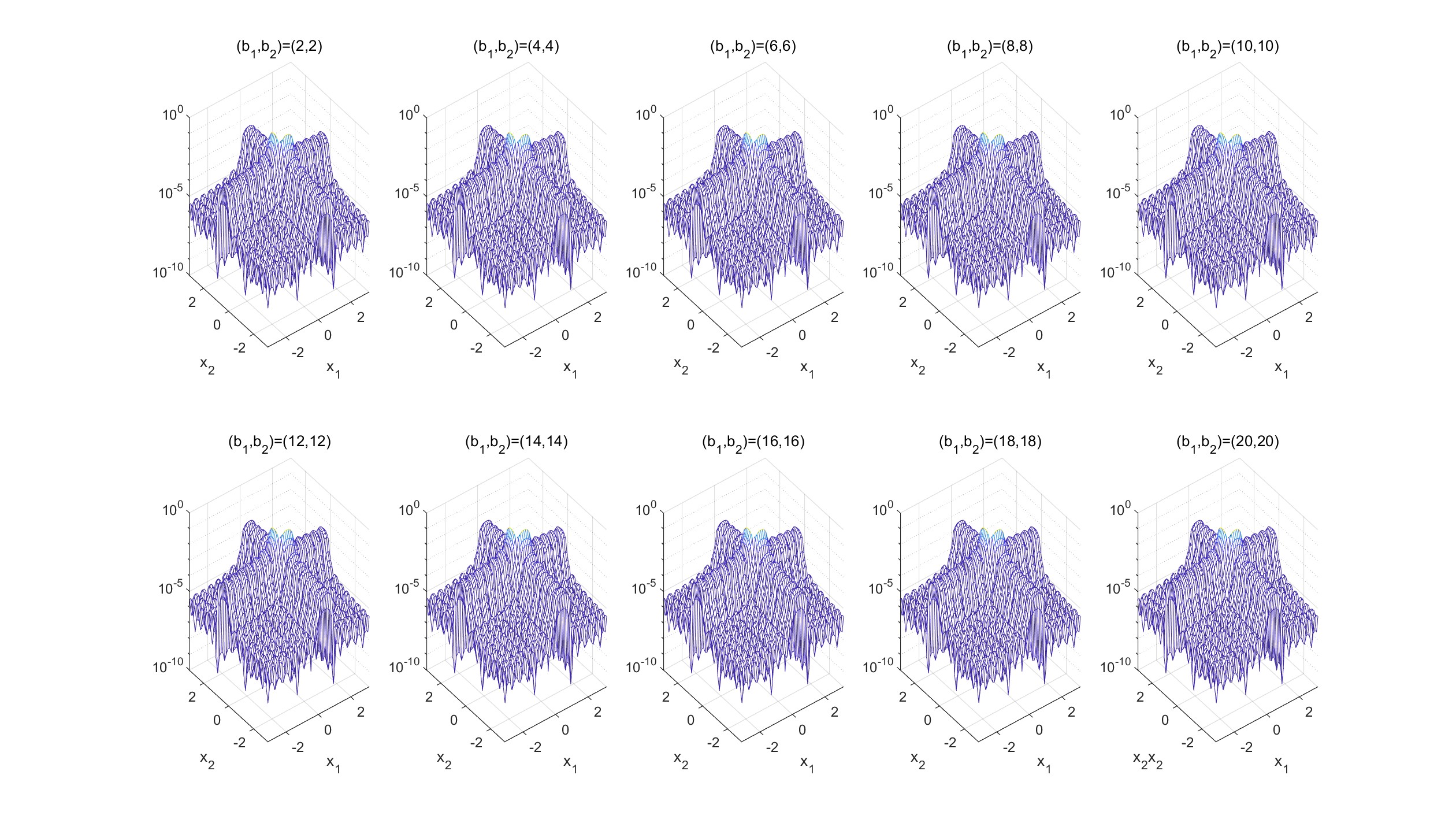}}\\
    \subfloat[$f_2(x_1,x_2)$]{\includegraphics[width=0.8\linewidth]{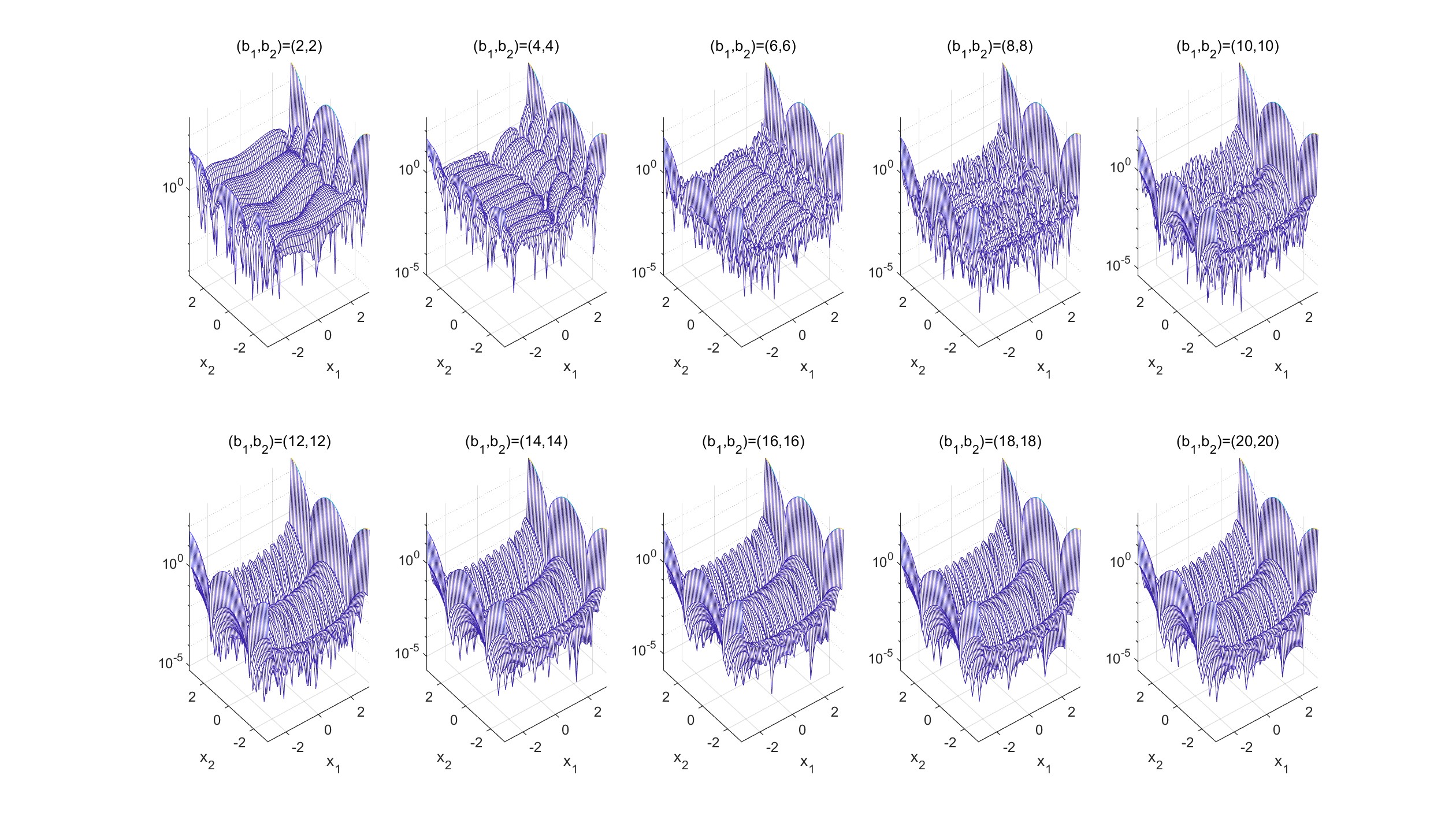}}\\
    \subfloat[$f_3(x_1,x_2)$]{\includegraphics[width=0.8\linewidth]{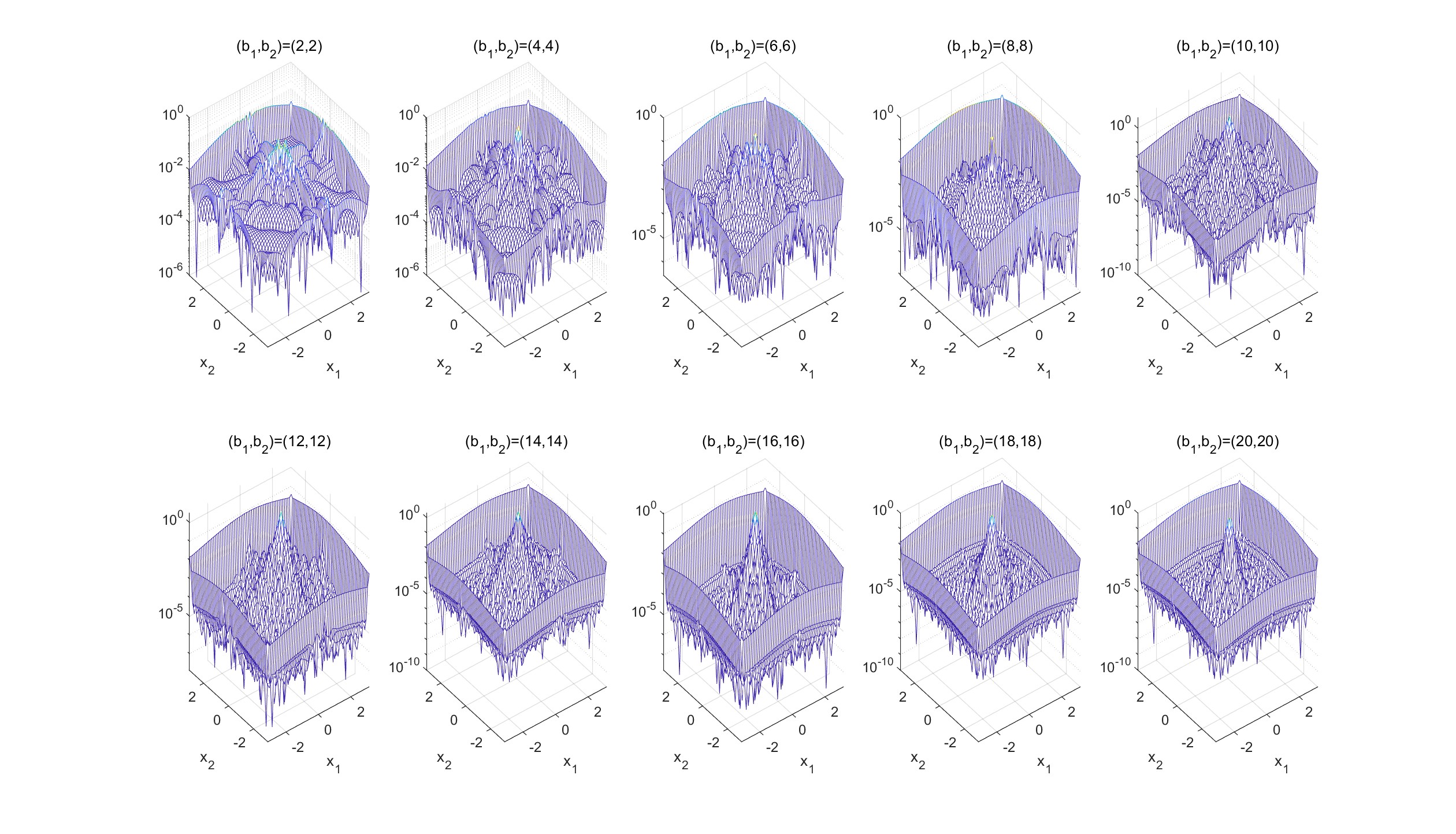}}\\
    \caption{When setting $\tau=1e-5$, with different pairs $(b_1,b_2)$, the values of $e(x_1,x_2)$ obtained by applying Algorithm \ref{t-lasso:alg2-main} with the tensor-product Newton-Cotes quadrature to the test functions $f_1(x_1,x_2)$, $f_2(x_1,x_2)$ and $f_3(x_1,x_2)$.}\label{t-lasso:fig1-main}
\end{figure}
\begin{figure}
    \setlength{\tabcolsep}{4pt}
    \renewcommand\arraystretch{1}
    \centering
    \subfloat[$f_1(x_1,x_2)$]{\includegraphics[width=0.8\linewidth]{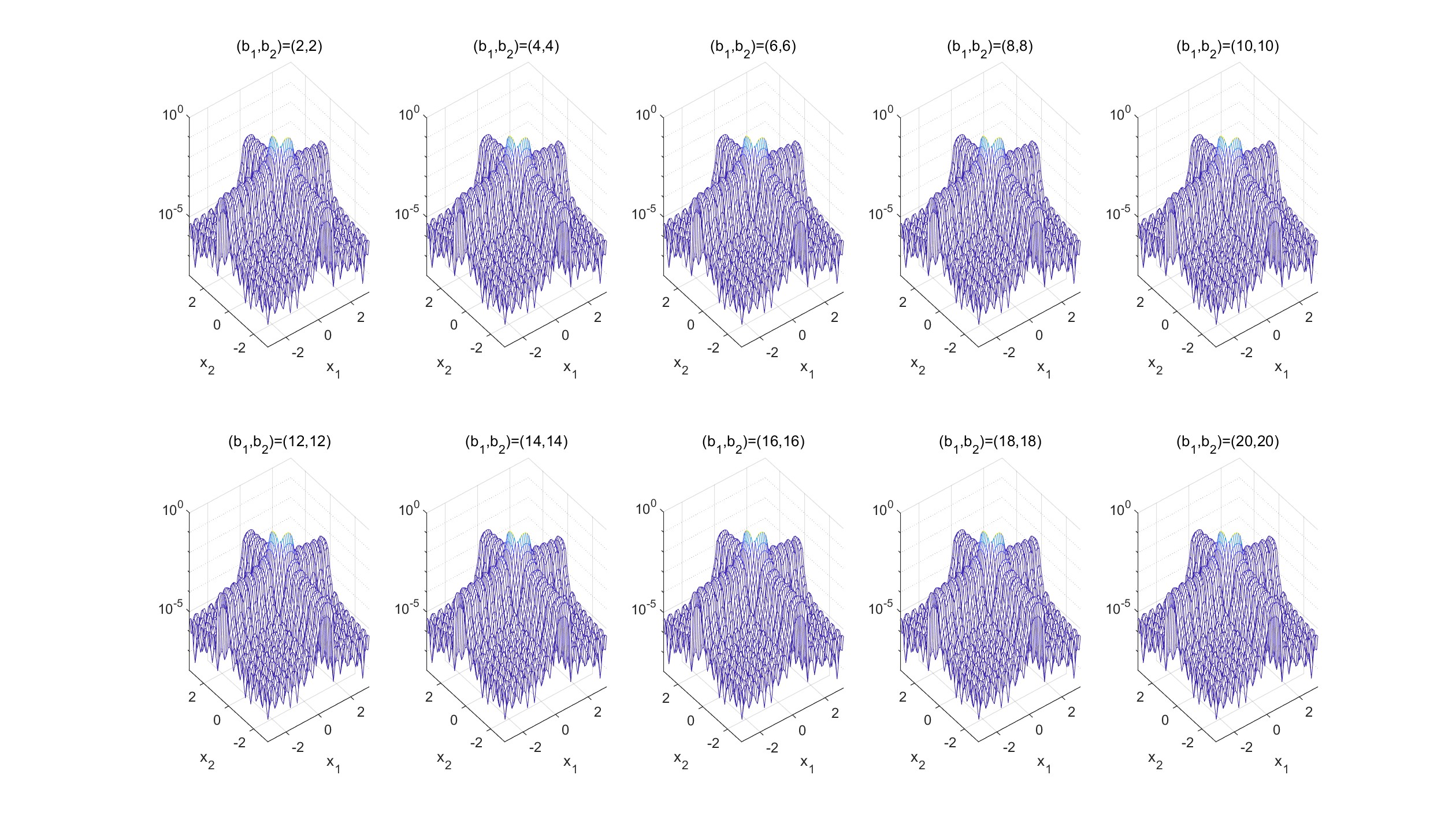}}\\
    \subfloat[$f_2(x_1,x_2)$]{\includegraphics[width=0.8\linewidth]{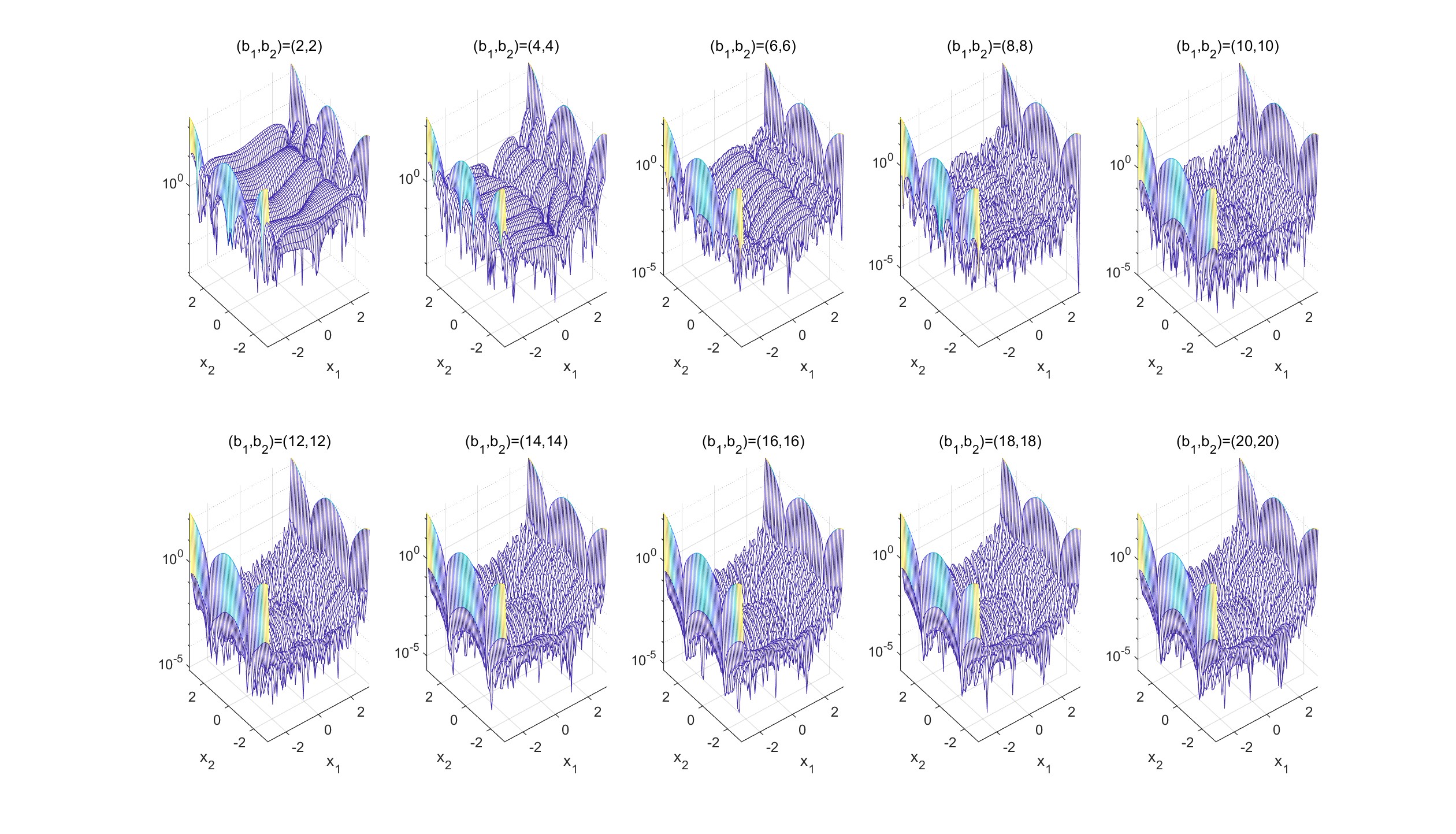}}\\
    \subfloat[$f_3(x_1,x_2)$]{\includegraphics[width=0.8\linewidth]{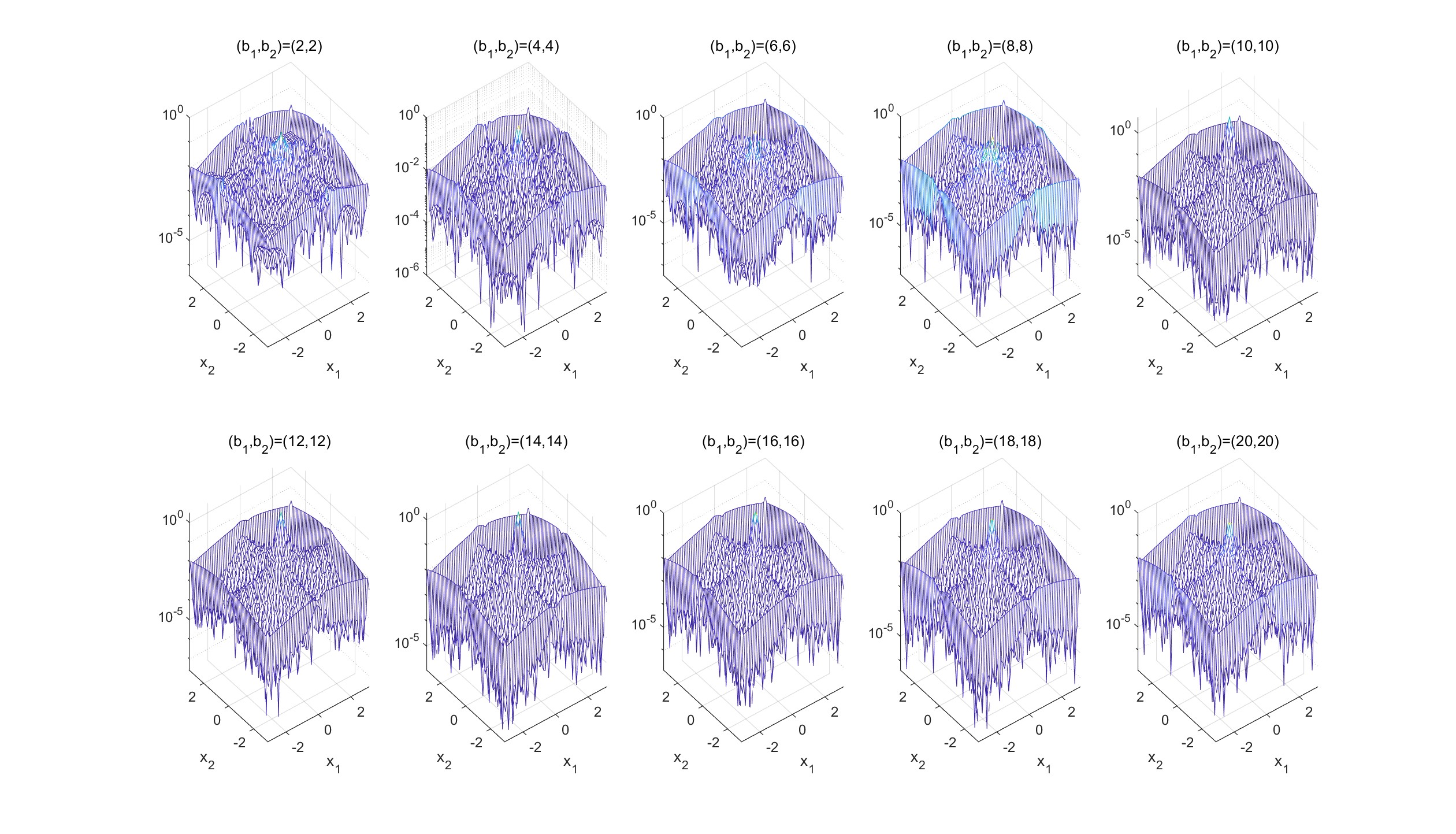}}\\
    \caption{When setting $\tau=1e-5$, with different pairs $(b_1,b_2)$, the values of $e(x_1,x_2)$ obtained by applying Algorithm \ref{t-lasso:alg2-main} with the tensor-product Clenshaw-Curtis quadrature to the test functions $f_1(x_1,x_2)$, $f_2(x_1,x_2)$ and $f_3(x_1,x_2)$.}\label{t-lasso:app:fig1-main}
\end{figure}
\begin{figure}
    \setlength{\tabcolsep}{4pt}
    \renewcommand\arraystretch{1}
    \centering
    \subfloat[$f_1(x_1,x_2)$]{\includegraphics[width=0.8\linewidth]{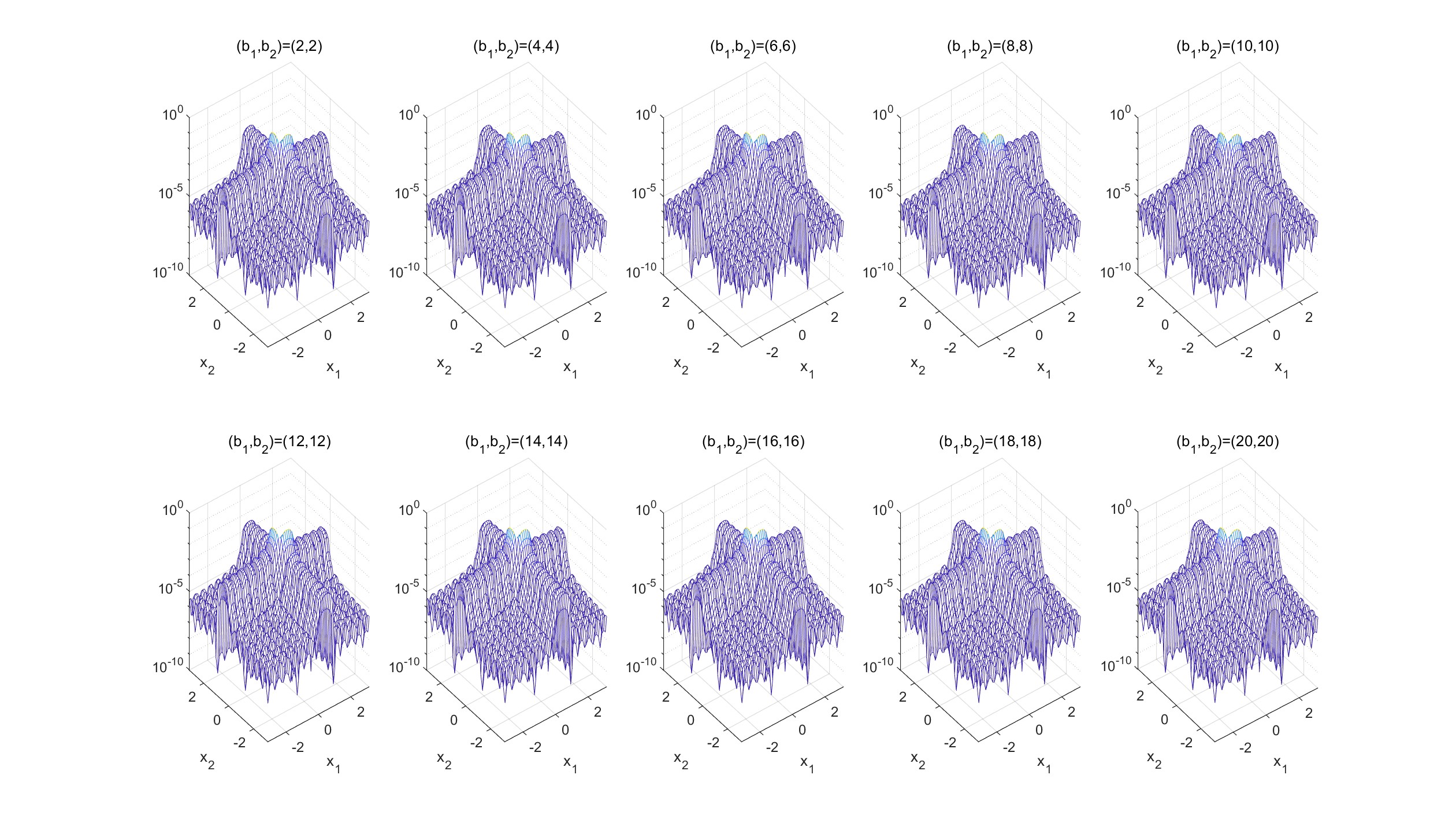}}\\
    \subfloat[$f_2(x_1,x_2)$]{\includegraphics[width=0.8\linewidth]{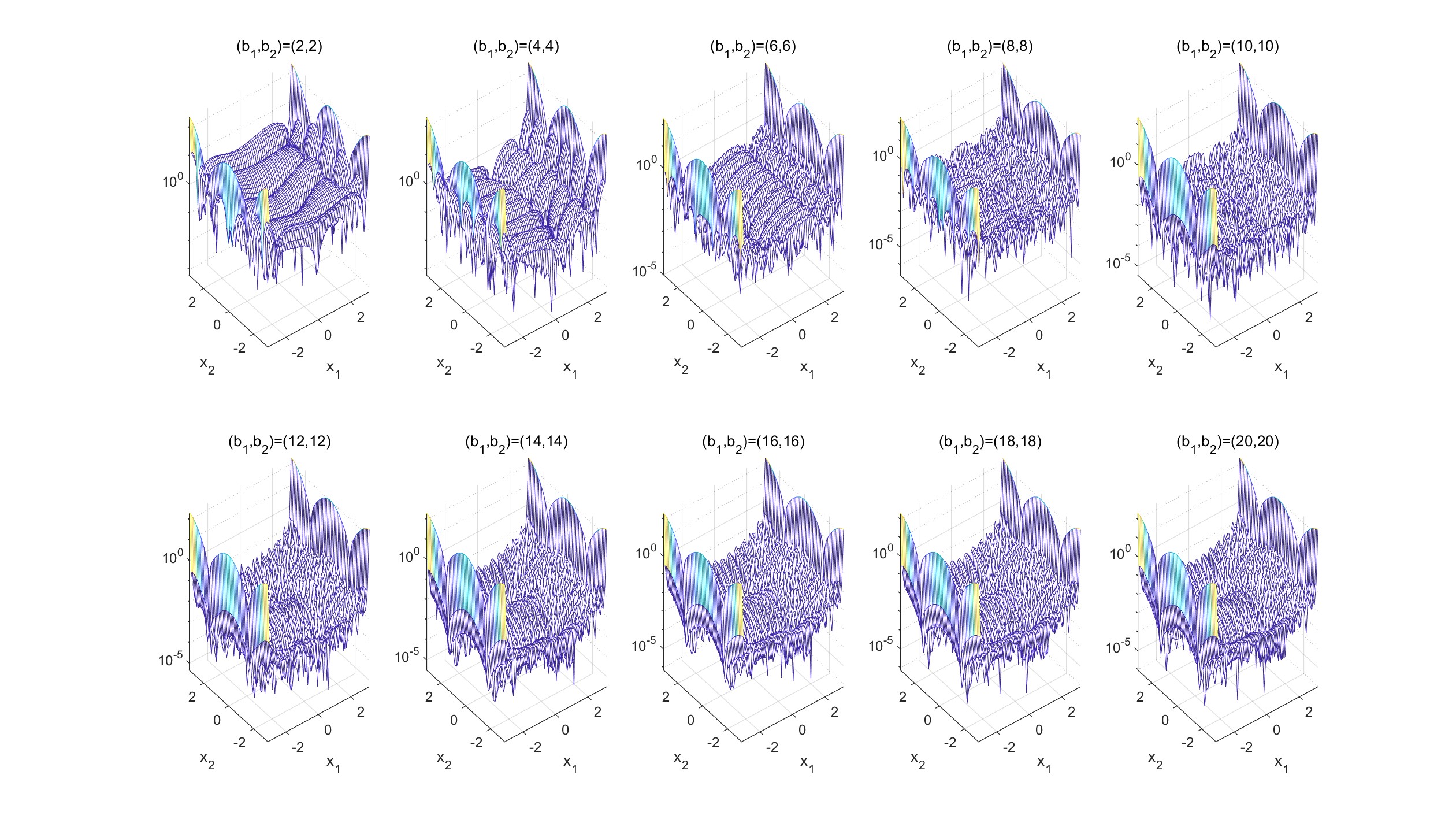}}\\
    \subfloat[$f_3(x_1,x_2)$]{\includegraphics[width=0.8\linewidth]{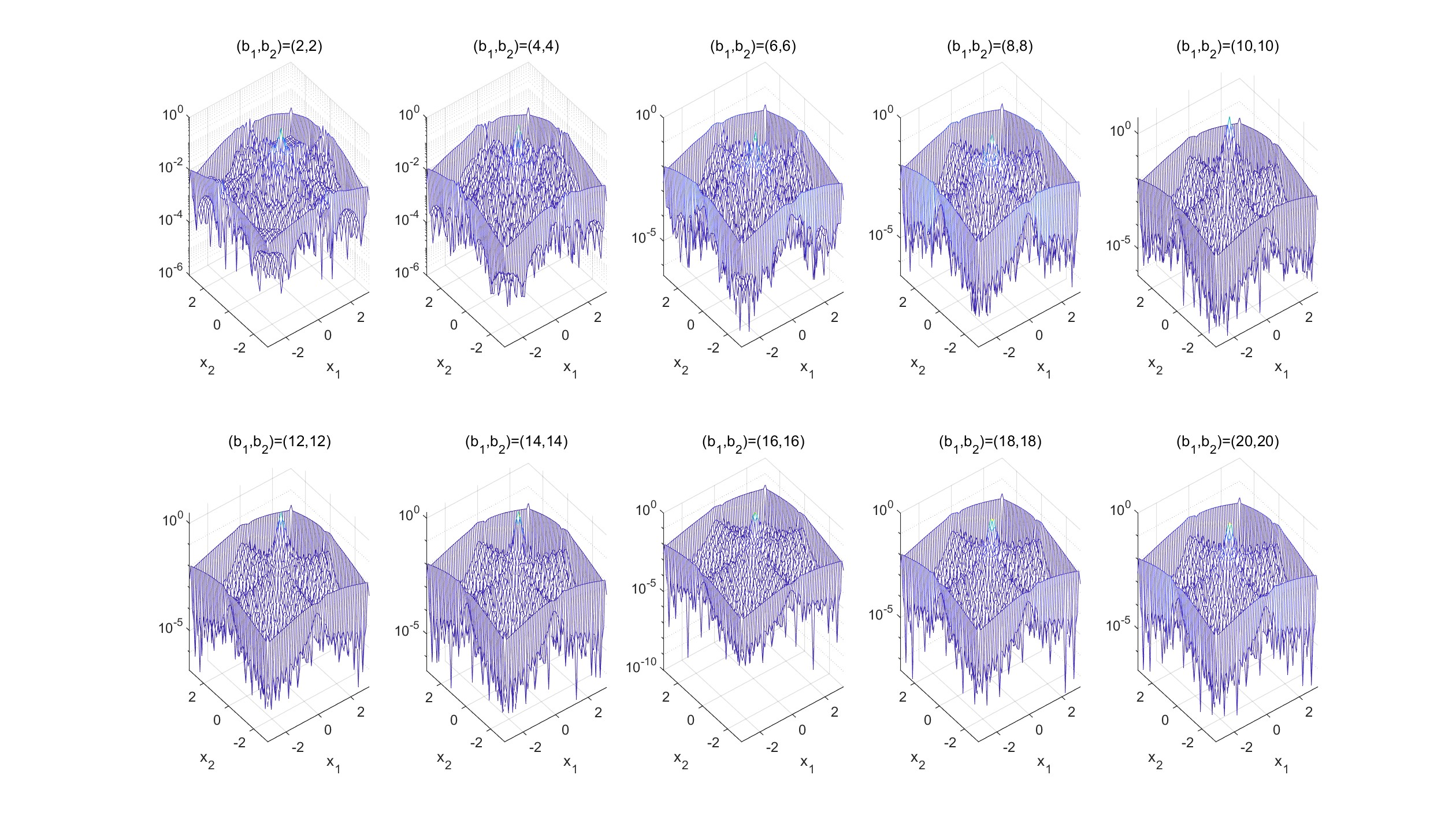}}\\
    \caption{When setting $\tau=1e-5$, with different pairs $(b_1,b_2)$, the values of $e(x_1,x_2)$ obtained by applying Algorithm \ref{t-lasso:alg2-main} with the tensor-product Gauss-Legendre quadrature to the test functions $f_1(x_1,x_2)$, $f_2(x_1,x_2)$ and $f_3(x_1,x_2)$.}\label{t-lasso:app:fig2-main}
\end{figure}

Finally, with the special choice of $(b_1,b_2)$ associated to each test function, we now consider the comparison of the approximation error and running time of Algorithm \ref{t-lasso:alg2-main} with different values of $\tau$ via these test functions. The related results are shown in Table \ref{t-lasso:tab2-main}, and Figs. \ref{t-lasso:fig2-main}, \ref{t-lasso:app:fig3-main} and \ref{t-lasso:app:fig4-main}, which illustrate that without loss of generality, we set $\tau=1e-5$ for each test function.

\begin{table}
   \scriptsize
   \centering
   \begin{tabular}{|c|ccc|ccc|ccc|}
      \hline
      \multirow{2}{*}{$\tau$} & \multicolumn{3}{|c}{$f_{1}(x_1,x_2)$} &  \multicolumn{3}{|c|}{$f_2(x_1,x_2)$} & \multicolumn{3}{|c|}{$f_3(x_1,x_2)$}  \\
      \cline{2-10}
      & CC & GL & NC & CC & GL & NC & CC & GL & NC     \\
      \hline
      $1e-1$
      & 4.6844 & 4.6604 & 5.1196
      & 6.8472 & 5.3382 & 4.3024
      & 4.6370 & 4.1311 & 4.2752   \\
      \hline
      $1e-2$
      & 4.0700 & 8.0897 & 6.5951
      & 4.5164 & 8.6188 & 4.3299
      & 4.3799 & 8.7755 & 4.6445   \\
      \hline
      $1e-3$
      & 6.4375 & 8.2904 & 5.7892
      & 6.2525 & 8.6726 & 6.1518
      & 6.0871 & 4.4096 & 7.1306   \\
      \hline
      $1e-4$
      & 6.0024 & 8.0956 & 6.1412
      & 8.2492 & 8.7472 & 6.4480
      & 6.0259 & 6.0995 & 7.2213   \\
      \hline
      $1e-5$
      & 6.2740 & 6.8364 & 6.2958
      & 9.1357 & 8.7507 & 6.4035
      & 6.5655 & 6.4456 & 7.3628   \\
      \hline
      $1e-6$
      & 6.2224 & 8.3803 & 6.2352
      & 4.6091 & 9.5111 & 6.3780
      & 6.5687 & 4.4670 & 7.0145   \\
      \hline
      $1e-7$
      & 6.1706 & 8.0074 & 6.3261
      & 4.6412 & 9.1666 & 6.5384
      & 6.5796 & 6.6366 & 6.7920   \\
      \hline
      $1e-8$
      & 6.1438 & 6.6473 & 6.3582
      & 6.1697 & 7.2456 & 5.9240
      & 6.1694 & 6.4278 & 7.4061   \\
      \hline
      $1e-9$
      & 6.2679 & 6.6865 & 6.1950
      & 8.1273 & 5.7725 & 6.4334
      & 6.4268 & 4.5961 & 7.0779   \\
      \hline
      $1e-10$
      & 6.4897 & 6.3935 & 6.2120
      & 8.1203 & 7.5289 & 6.4491
      & 7.5694 & 6.5324 & 7.6118   \\
      \hline
   \end{tabular}
   \caption{For the special choice of $(b_1,b_2)$, with different $\tau$, the running time (seconds) obtained by applying Algorithm \ref{t-lasso:alg2-main} with three tensor-product quadratures to the test functions $f_1(x_1,x_2)$, $f_2(x_1,x_2)$ and $f_3(x_1,x_2)$.}
\label{t-lasso:tab2-main}
\end{table}

\begin{figure}
    \setlength{\tabcolsep}{4pt}
    \renewcommand\arraystretch{1}
    \centering
    \subfloat[$f_1(x_1,x_2)$]{\includegraphics[width=0.8\linewidth]{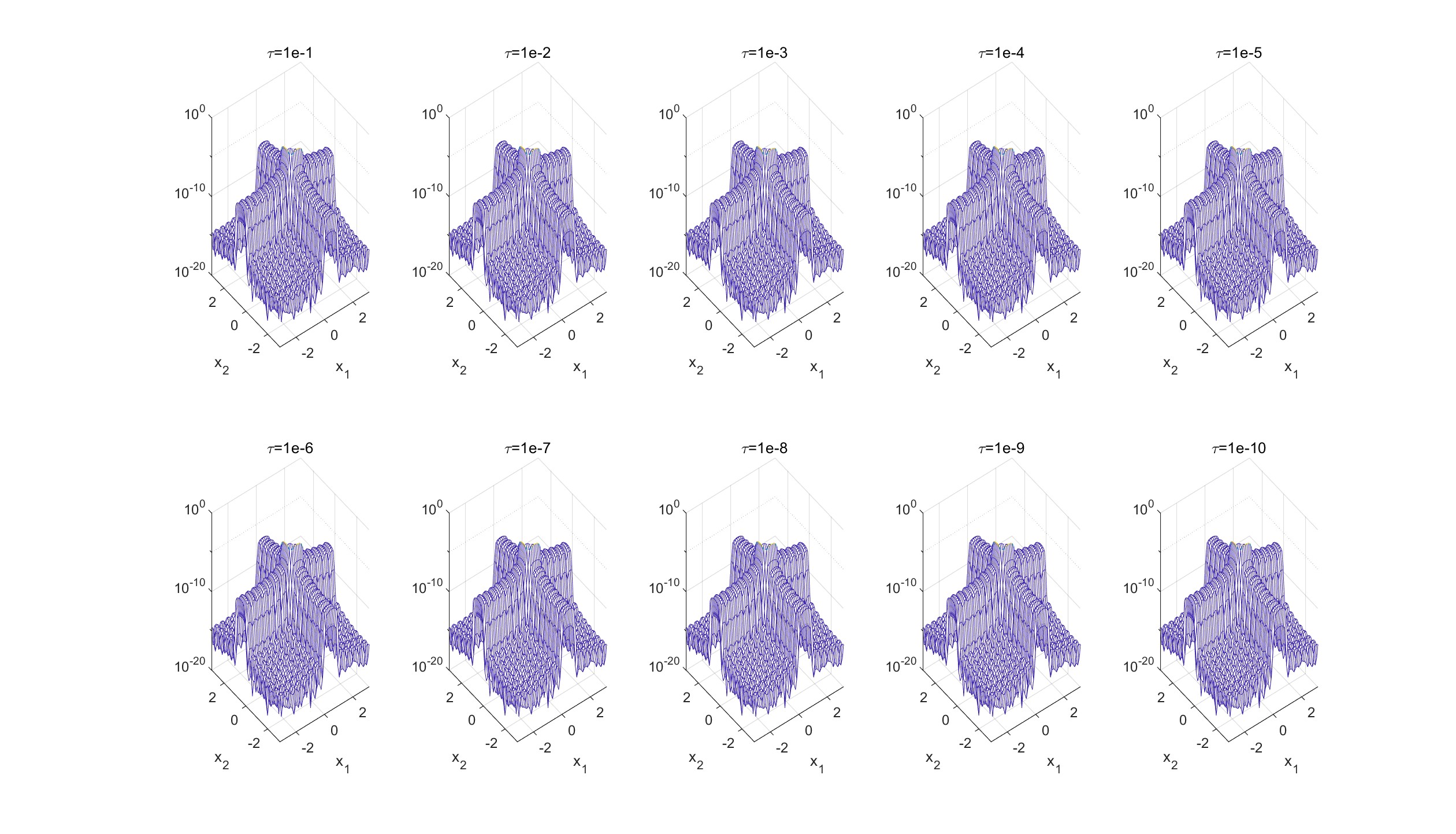}}\\
    \subfloat[$f_2(x_1,x_2)$]{\includegraphics[width=0.8\linewidth]{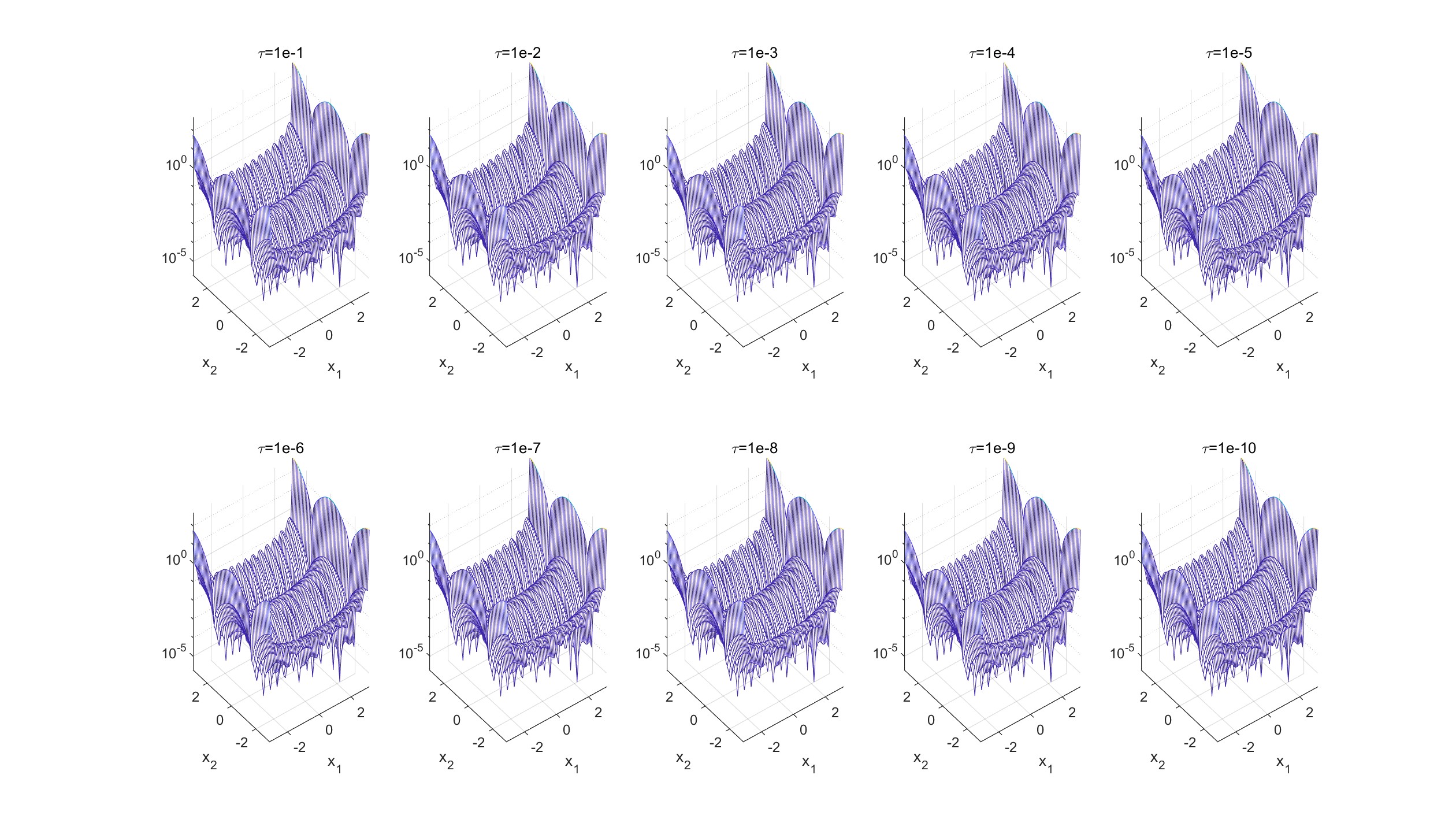}}\\
    \subfloat[$f_3(x_1,x_2)$]{\includegraphics[width=0.8\linewidth]{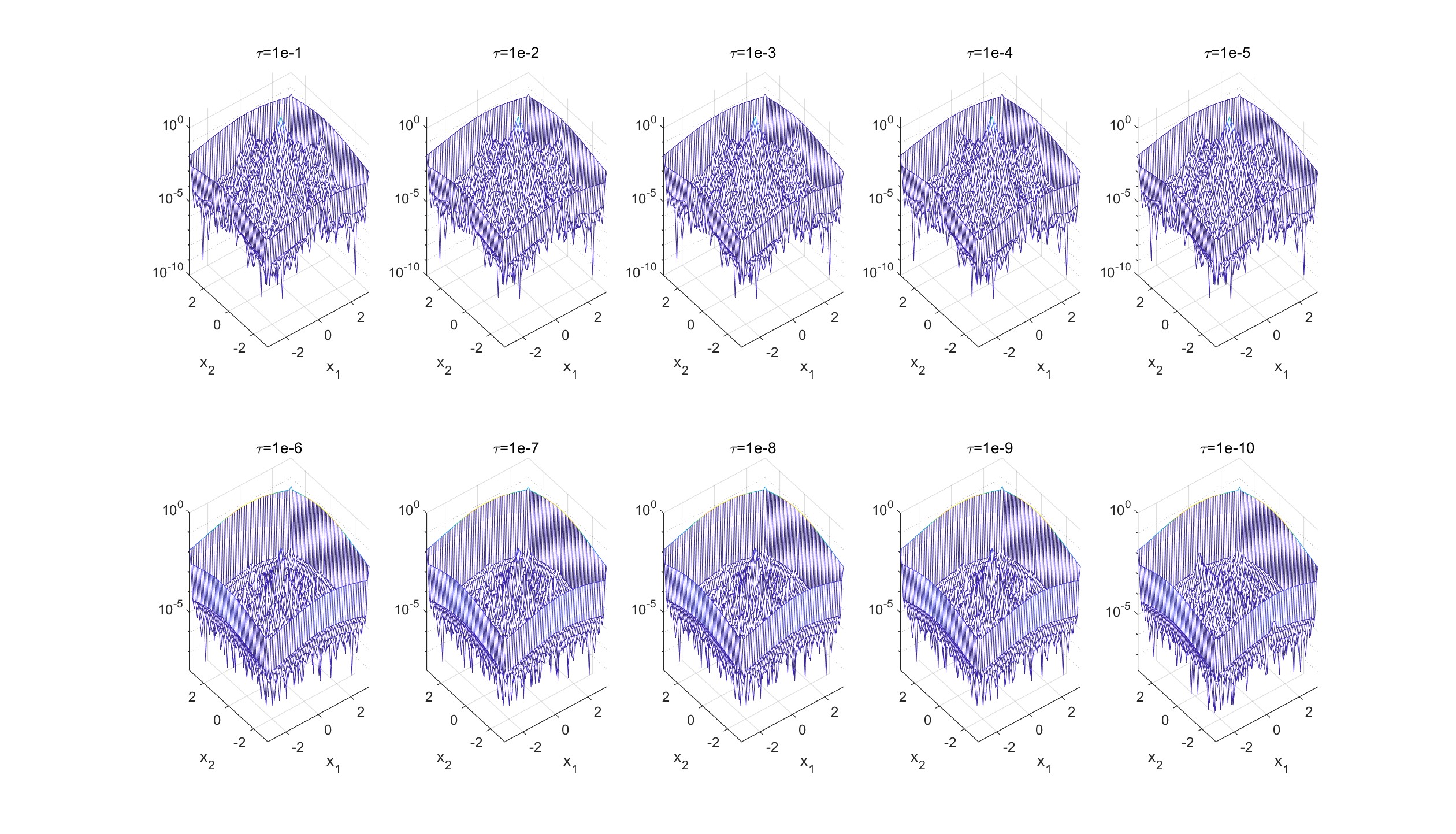}}\\
    \caption{For the special choice of $(b_1,b_2)$, with different $\tau$, the values of $e(x_1,x_2)$ obtained by applying Algorithm \ref{t-lasso:alg2-main} with the tensor-product Newton-Cotes quadrature to the test functions $f_1(x_1,x_2)$, $f_2(x_1,x_2)$ and $f_3(x_1,x_2)$.}\label{t-lasso:fig2-main}
\end{figure}

\begin{figure}
    \setlength{\tabcolsep}{4pt}
    \renewcommand\arraystretch{1}
    \centering
    \subfloat[$f_1(x_1,x_2)$]{\includegraphics[width=0.8\linewidth]{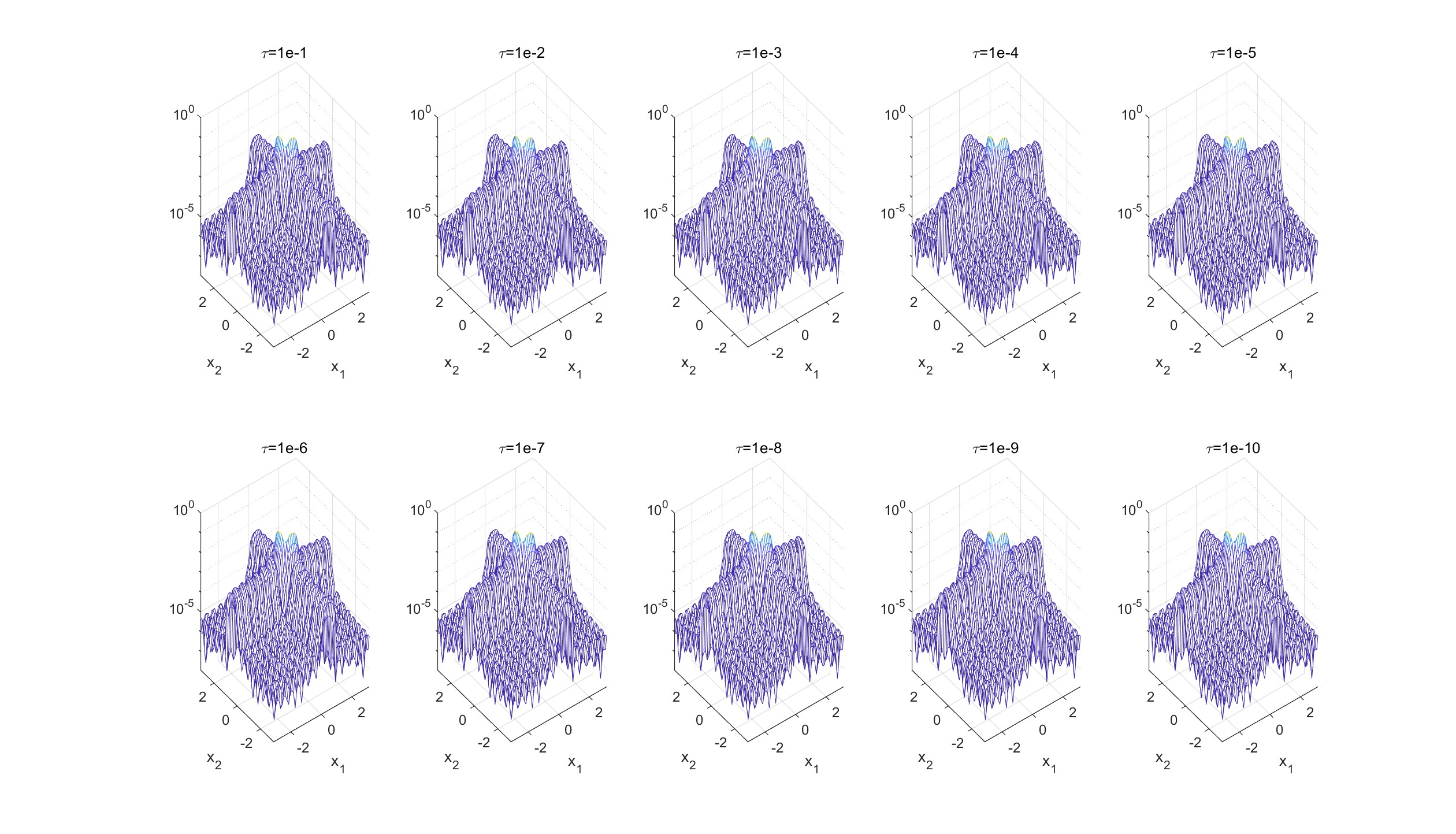}}\\
    \subfloat[$f_2(x_1,x_2)$]{\includegraphics[width=0.8\linewidth]{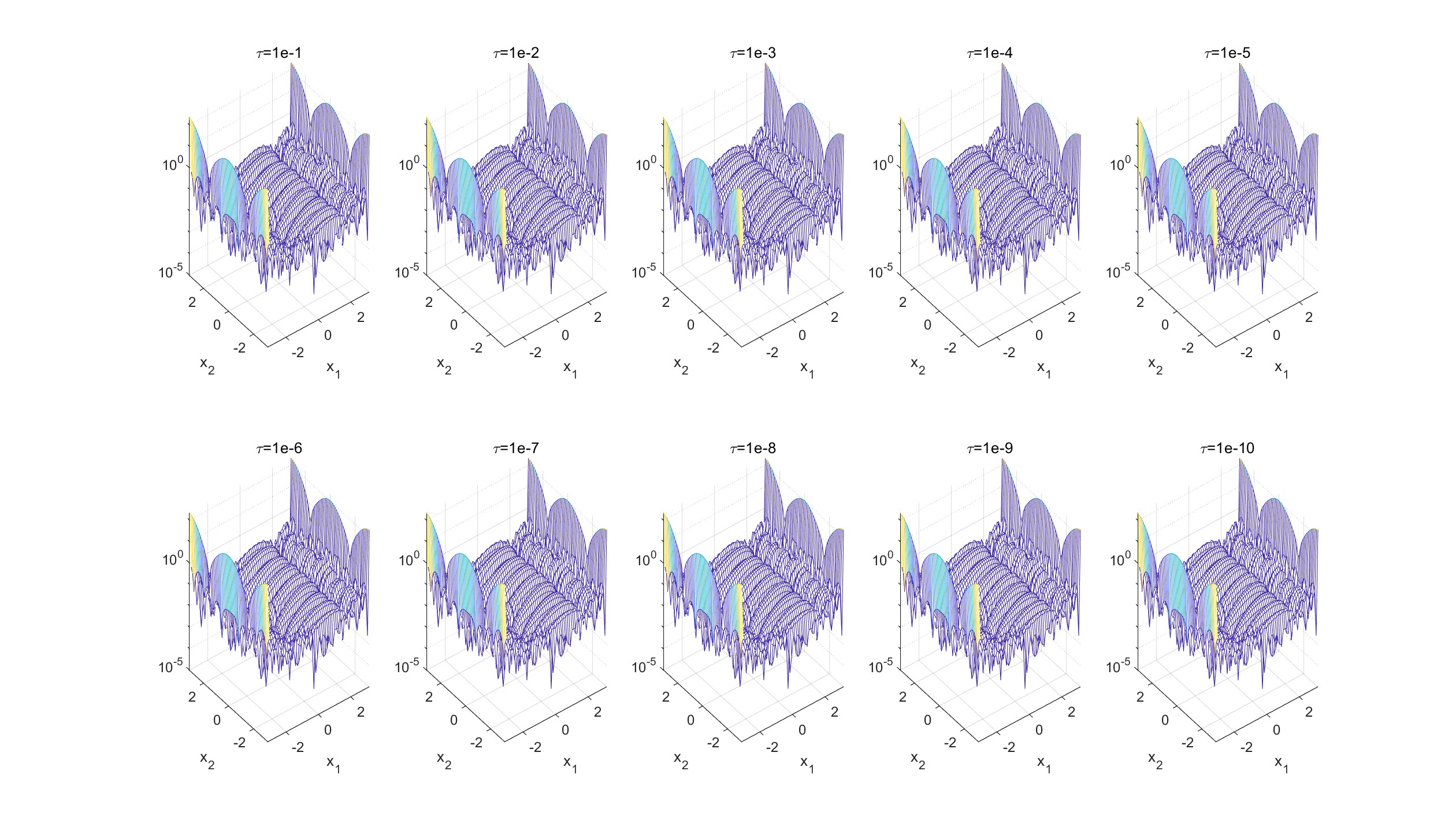}}\\
    \subfloat[$f_3(x_1,x_2)$ ]{\includegraphics[width=0.8\linewidth]{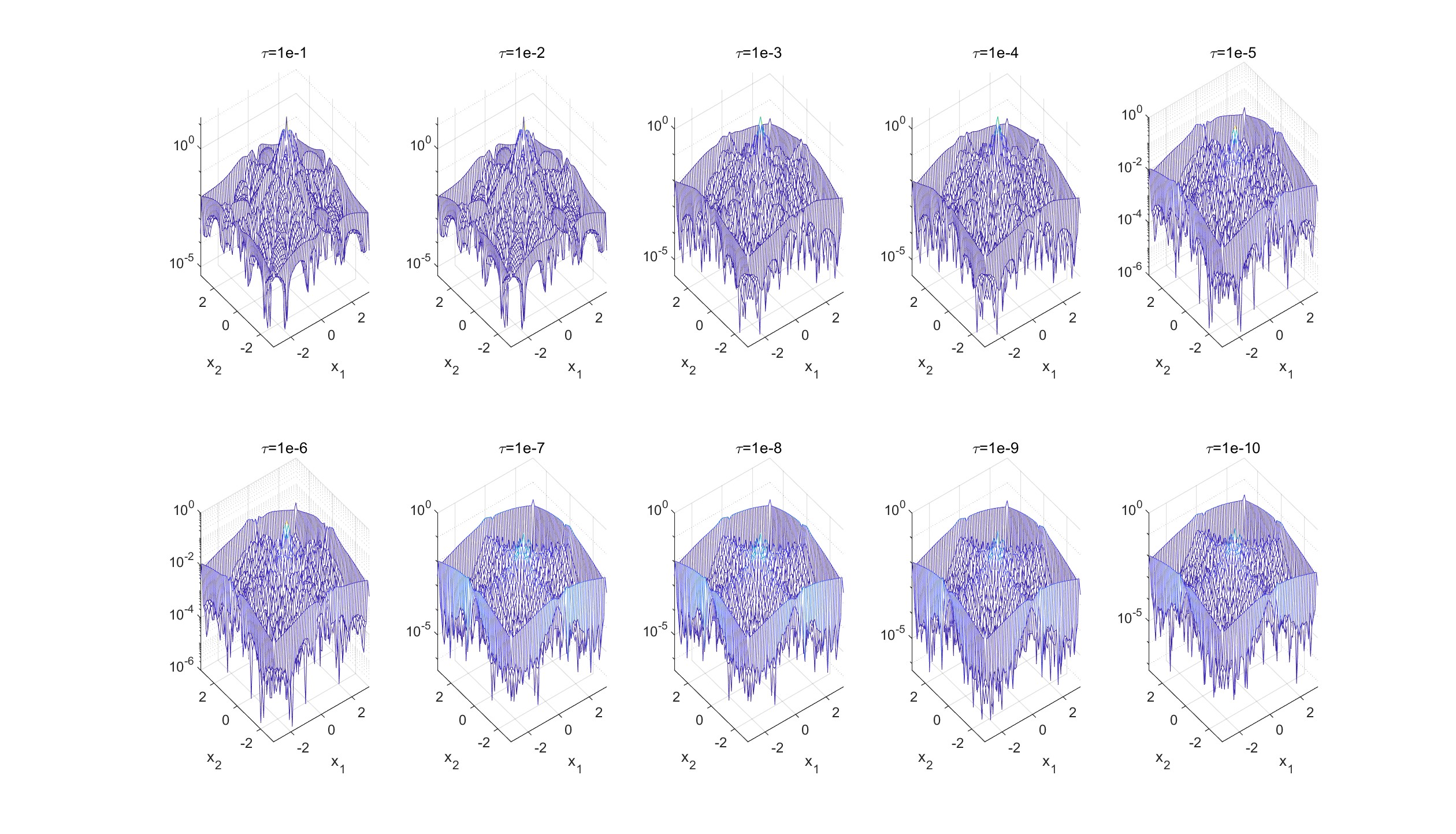}}\\
    \caption{For the special choice of $(b_1,b_2)$, with different $\tau$, the values of $e(x_1,x_2)$ obtained by applying Algorithm \ref{t-lasso:alg2-main} with the tensor-product Clenshaw-Curtis quadrature to the test functions $f_1(x_1,x_2)$, $f_2(x_1,x_2)$ and $f_3(x_1,x_2)$.}\label{t-lasso:app:fig3-main}
\end{figure}

\begin{figure}
    \setlength{\tabcolsep}{4pt}
    \renewcommand\arraystretch{1}
    \centering
    \subfloat[$f_1(x_1,x_2)$]{\includegraphics[width=0.8\linewidth]{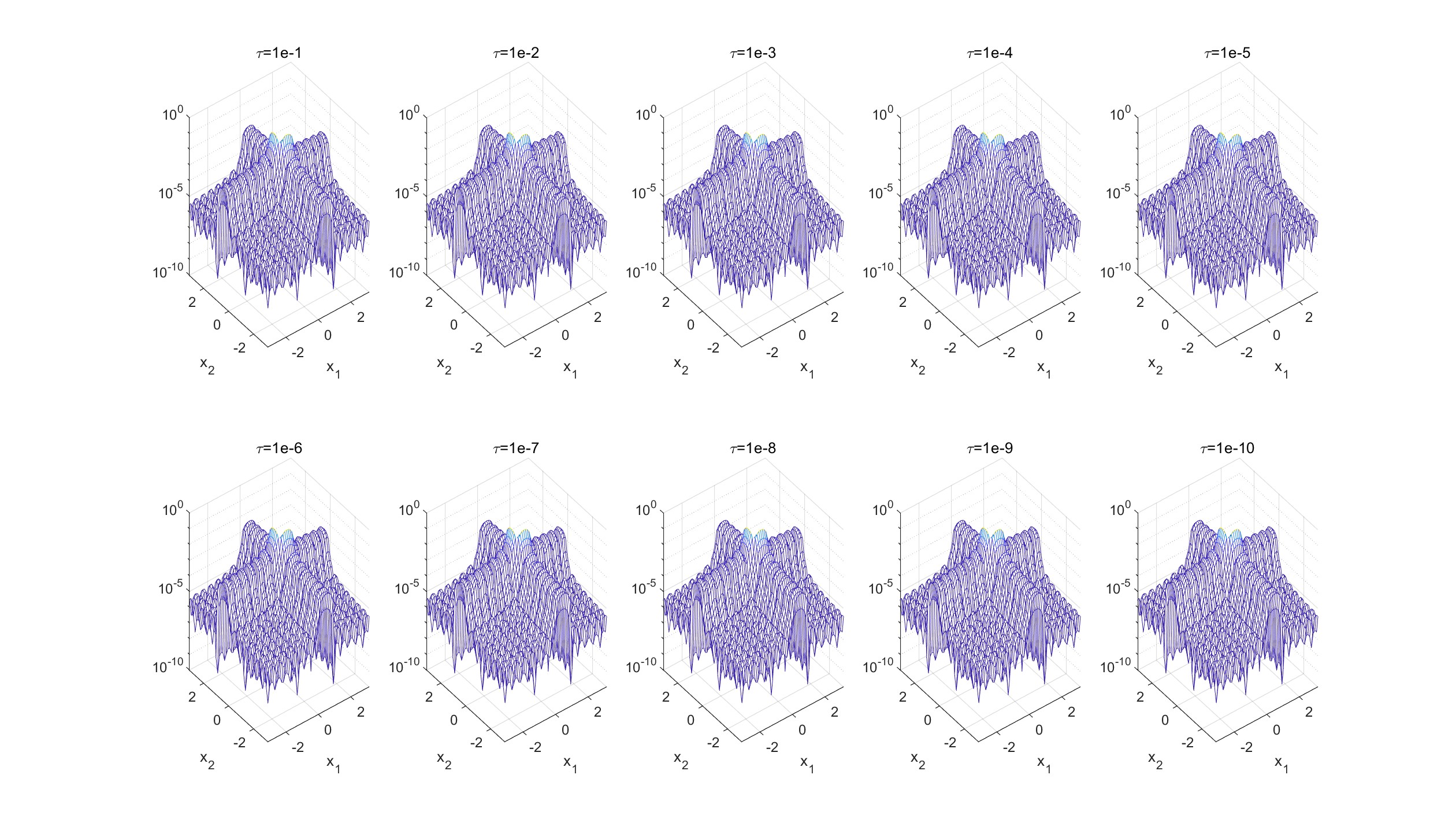}}\\
    \subfloat[$f_2(x_1,x_2)$]{\includegraphics[width=0.8\linewidth]{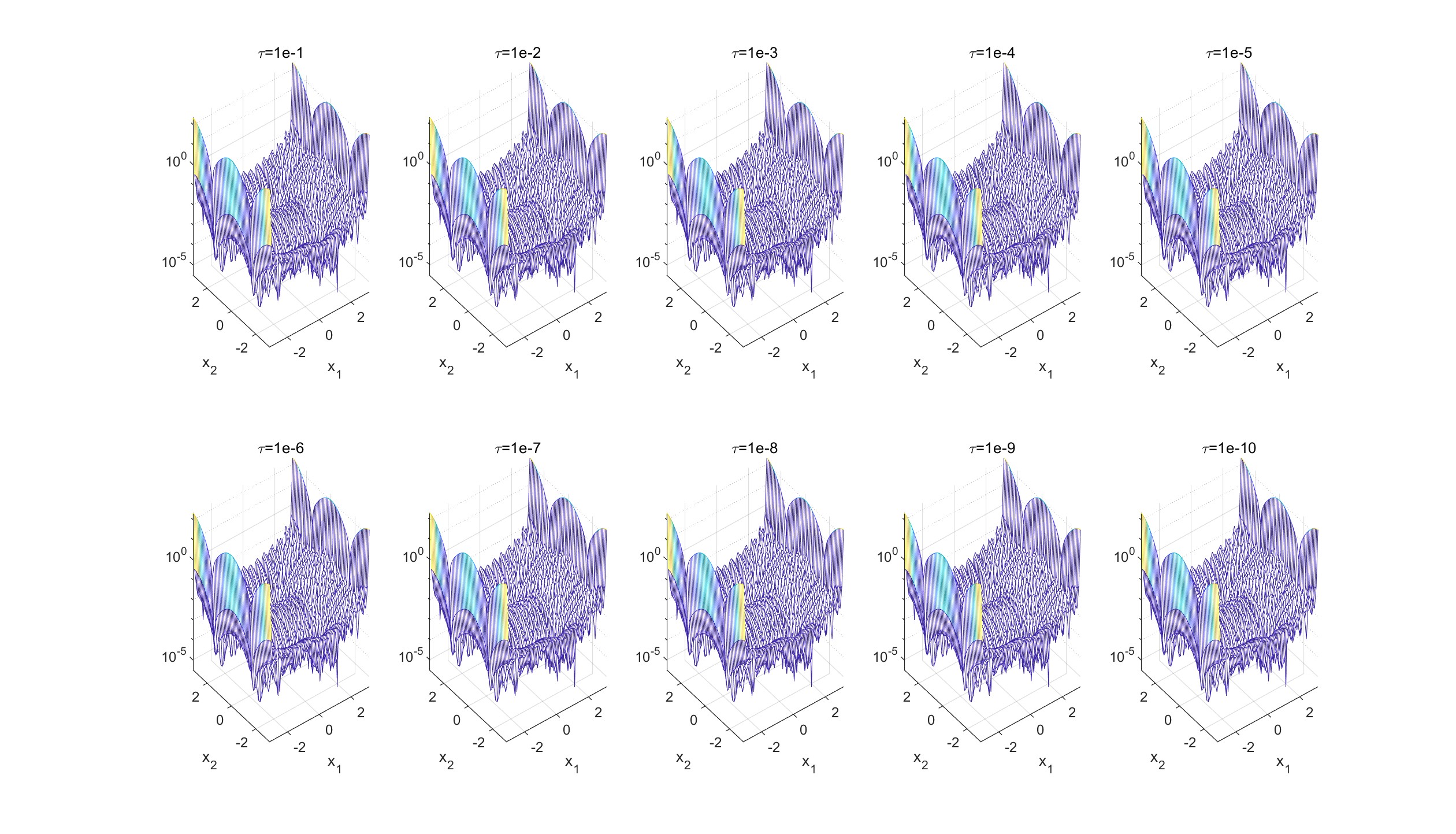}}\\
    \subfloat[$f_3(x_1,x_2)$]{\includegraphics[width=0.8\linewidth]{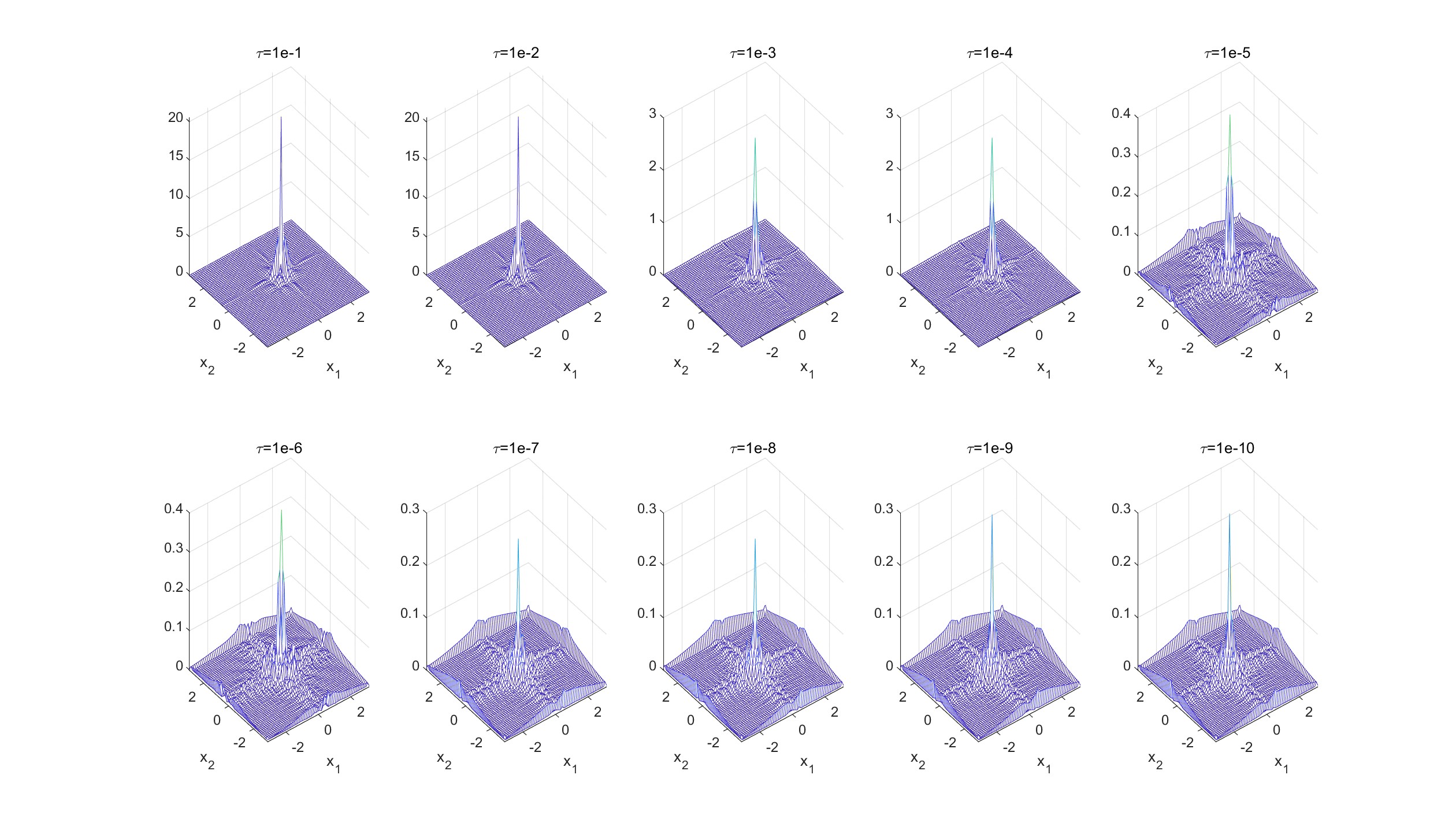}}\\
    \caption{For the special choice of $(b_1,b_2)$, with different $\tau$, the values of $e(x_1,x_2)$ obtained by applying Algorithm \ref{t-lasso:alg2-main} with the tensor-product Gauss-Legendre quadrature to the test functions $f_1(x_1,x_2)$, $f_2(x_1,x_2)$ and $f_3(x_1,x_2)$.}\label{t-lasso:app:fig4-main}
\end{figure}

\begin{remark}
    In this section, we only consider the choices of $(b_1,b_2)$ and $\tau$ in Algorithm \ref{t-lasso:alg2-main}. Similarly, we can also compare the efficiency of Algorithm \ref{t-lasso:alg1-main} with different values of $(b_1,b_2)$ and $\tau$. For clarity, we assume that the values of $(b_1,b_2)$ and $\tau$ in Algorithm \ref{t-lasso:alg1-main} are the same as that in Algorithm \ref{t-lasso:alg2-main}, respectively.
\end{remark}
\subsection{Comparison results}

As discussion in Section \ref{t-lasso:sec5:sub2-main}, we discussed the reasonable choices for $(b_1,b_2)$ and $\tau$ in Algorithms \ref{t-lasso:alg1-main} and \ref{t-lasso:alg2-main} when we apply these two algorithms to the test functions $f_1(x_1,x_2)$, $f_2(x_1,x_2)$ and $f_3(x_1,x_2)$. For a given pair $(I_1,I_2)$ with positive integers, $Ff(x_1,x_2)$ in (\ref{t-lasso:approximation-expression-general-v2-main}) is the truncated Fourier series expression associated with $f(x_1,x_2)$, which is denoted by Truncated Fourier.

We now compare the efficiency of Truncated Fourier, and Algorithms \ref{t-lasso:alg1-main} and \ref{t-lasso:alg2-main} with three tensor-product quadratures via three functions $f_1(x_1,x_2)$, $f_2(x_1,x_2)$ and $f_3(x_1,x_2)$. The values of each original function $f(x_1,x_2)$, and three approximations $Ff(x_1,x_2)$,  $\widetilde{F}_1f(x_1,x_2)$ and $\widetilde{F}_2f(x_1,x_2)$ are shown in Figs. \ref{t-lasso:fig3-main} (for NC), \ref{t-lasso:app:fig5-main} (for CC) and \ref{t-lasso:app:fig6-main} (for GL). Meanwhile, the corresponding running times are listed in Table \ref{t-lasso:tab3-main}. Hence, we conclude that (a) for each tensor-product quadrature, the efficiency of Algorithms \ref{t-lasso:alg1-main} and \ref{t-lasso:alg2-main} is comparable to the truncated Fourier series expression; (b) for $f_1(x_1,x_2)$ and $f_2(x_1,x_2)$, Algorithms \ref{t-lasso:alg1-main} and \ref{t-lasso:alg2-main} are faster than Truncated Fourier with the same tensor-product quadrature; and (c) for $f_3(x_1,x_2)$, Algorithm \ref{t-lasso:alg2-main} is faster than Truncated Fourier and Algorithm \ref{t-lasso:alg1-main} is slower than Truncated Fourier with the same tensor-product quadrature. Note that as shown in Table \ref{t-lasso:fig3-main}, Algorithm \ref{t-lasso:alg2-main} with NC is not suitable for the function $f_3(x_1,x_2)$.

\begin{remark}
    For each part in Figs. \ref{t-lasso:fig3-main}, \ref{t-lasso:app:fig5-main} and \ref{t-lasso:app:fig6-main}, the values of $f(x_1,x_2)$, $F(x_1,x_2)$, $\widetilde{F}f(x_1,x_2)$ and $\widetilde{F}f(x_1,x_2)$ evaluated at the points in $\mathbb{M}(x_1,x_2)$ are shown in the first row, and the values of $e(x_1,x_2)$ associated with $F(x_1,x_2)$, $\widetilde{F}f(x_1,x_2)$ and $\widetilde{F}f(x_1,x_2)$ are shown in the second row.
\end{remark}

\begin{table}[htb]
   \scriptsize
   \centering
   \begin{tabular}{|c|c|c|c|c|}
      \hline
      Types  & Algorithms & $f_1(x_1,x_2)$ & $f_2(x_1,x_2)$ & $f_3(x_1,x_2)$ \\
      \hline
      \multirow{3}{*}{CC} & Truncated Fourier          & 21.8316 &   12.7248  &  12.1434  \\
      \cline{2-5}
      \multirow{3}{*}{} & Algorithm \ref{t-lasso:alg1-main} & 6.6532 & 12.0676 & 14.7140  \\
      \cline{2-5}
      \multirow{3}{*}{} & Algorithm \ref{t-lasso:alg2-main} & 4.0010 & 6.2976 & 6.4122  \\
      \hline
      \multirow{3}{*}{GL} & Truncated Fourier          & 11.4861 & 18.1679 & 12.4595  \\
      \cline{2-5}
      \multirow{3}{*}{} & Algorithm \ref{t-lasso:alg1-main} & 7.8366 & 9.9590 & 18.0444  \\
      \cline{2-5}
      \multirow{3}{*}{} & Algorithm \ref{t-lasso:alg2-main} & 7.9238 & 7.5091 & 6.2866  \\
      \hline
      \multirow{3}{*}{NC} & Truncated Fourier          & 20.7400 & 16.4348 & 13.0728  \\
      \cline{2-5}
      \multirow{3}{*}{} & Algorithm \ref{t-lasso:alg1-main} & 5.6615 & 8.4436 & 16.5136  \\
      \cline{2-5}
      \multirow{3}{*}{} & Algorithm \ref{t-lasso:alg2-main} & 3.9860 & 8.5972 & 5.4351  \\
      \hline
   \end{tabular}
   \caption{For the special choice of $(b_1,b_2)$ and $\tau=1e-5$, the running time (seconds) obtained by applying truncated Fourier, and Algorithms \ref{t-lasso:alg1-main} and \ref{t-lasso:alg2-main} with three tensor-product quadratures to the test functions $f_1(x_1,x_2)$, $f_2(x_1,x_2)$ and $f_3(x_1,x_2)$.}
   \label{t-lasso:tab3-main}
\end{table}

\begin{figure}
    \setlength{\tabcolsep}{4pt}
    \renewcommand\arraystretch{1}
    \centering
    \subfloat[$f_1(x_1,x_2)$]{\includegraphics[width=0.8\linewidth]{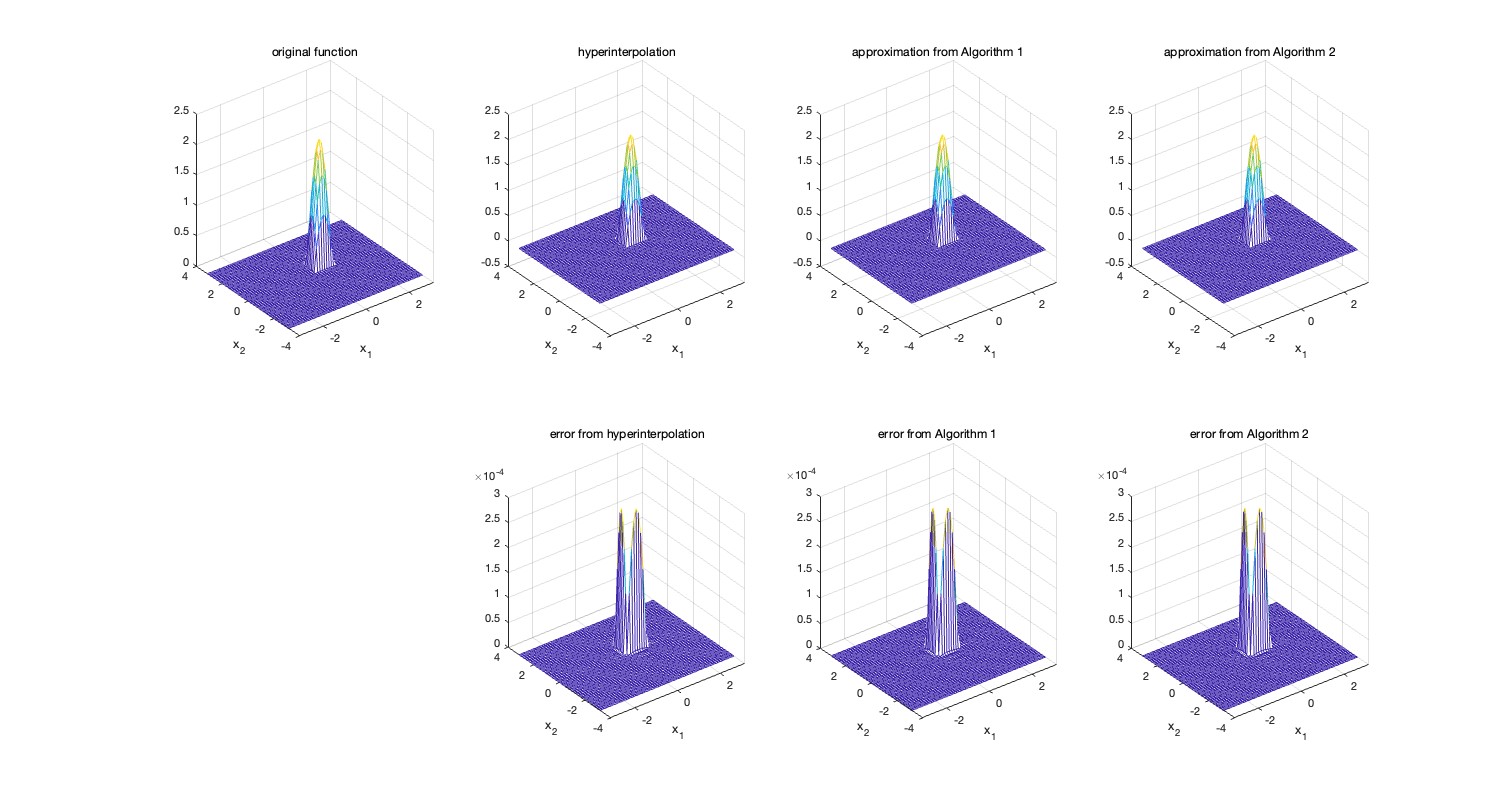}}\\
    \subfloat[$f_2(x_1,x_2)$]{\includegraphics[width=0.8\linewidth]{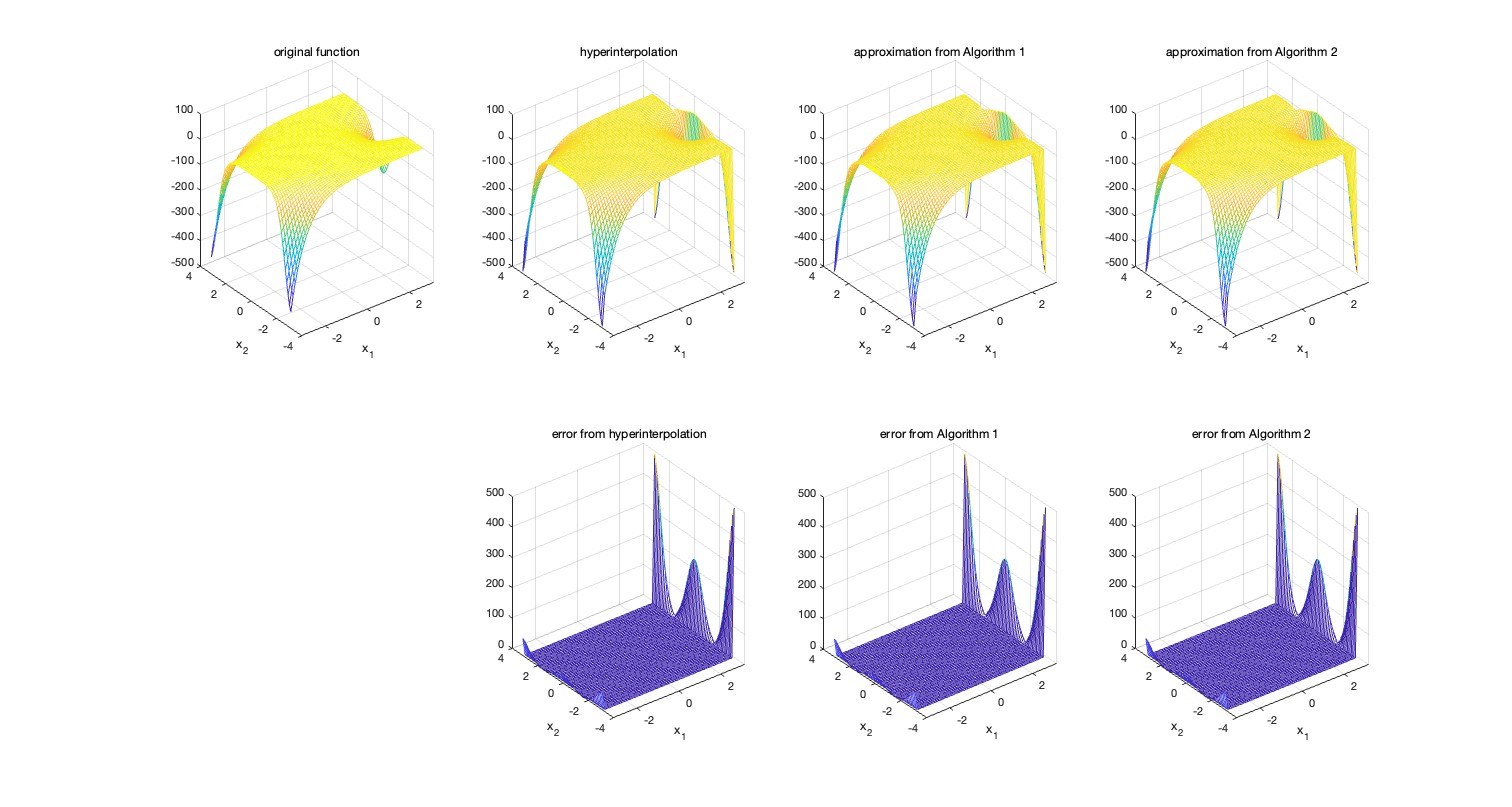}}\\
    \subfloat[$f_3(x_1,x_2)$]{\includegraphics[width=0.8\linewidth]{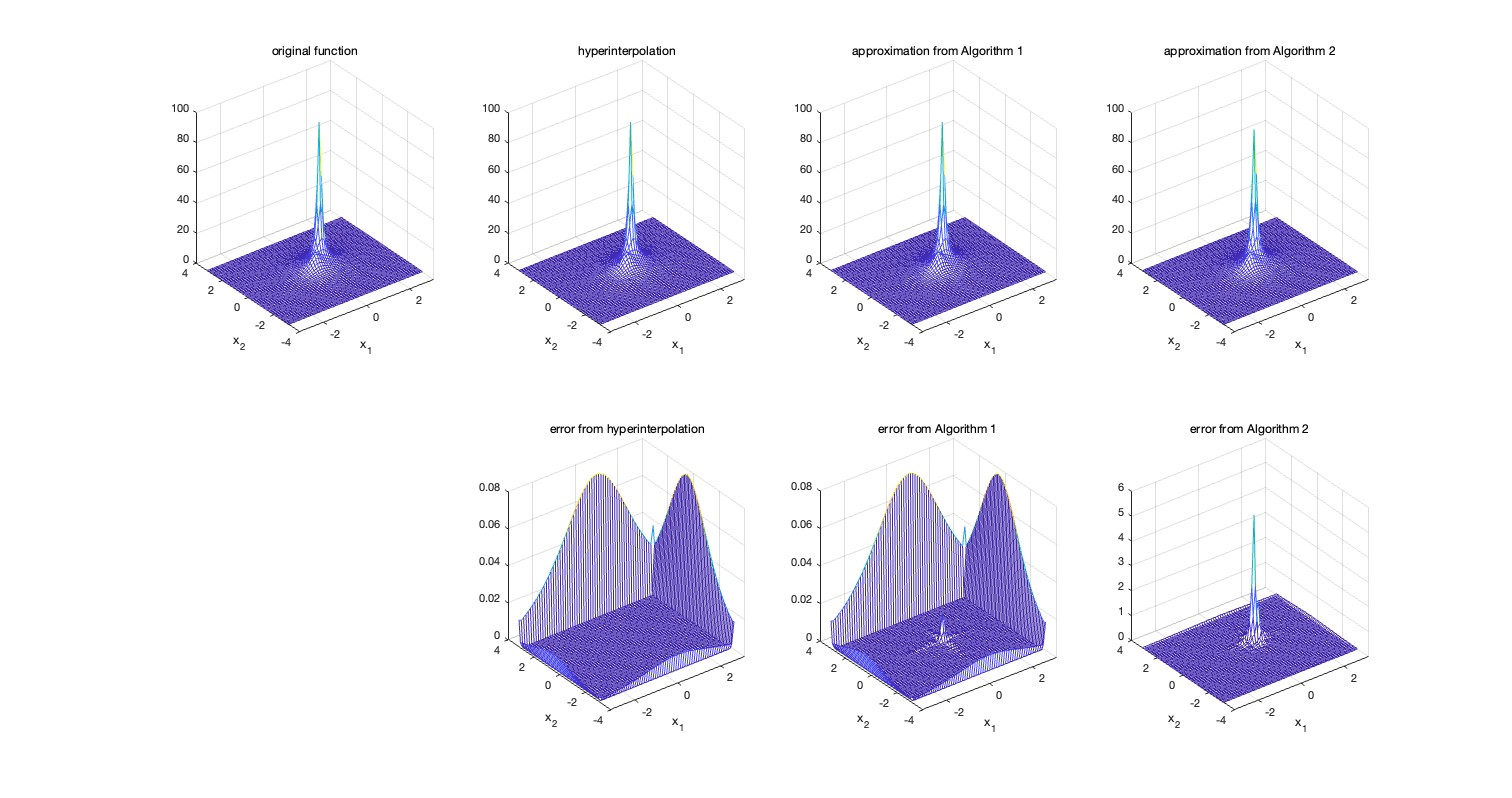}}\\
    \caption{For the special choice of $(b_1,b_2)$ and $\tau=1e-5$, the values of the truncated Fourier series expression and its approximation, and the associated error function obtained by applying Algorithms \ref{t-lasso:alg1-main} and \ref{t-lasso:alg2-main} with the tensor-product Newton-Cotes quadrature to the test functions $f_1(x_1,x_2)$, $f_2(x_1,x_2)$ and $f_3(x_1,x_2)$.}\label{t-lasso:fig3-main}
\end{figure}

\begin{figure}
    \setlength{\tabcolsep}{4pt}
    \renewcommand\arraystretch{1}
    \centering
    \subfloat[$f_1(x_1,x_2)$]{\includegraphics[width=0.8\linewidth]{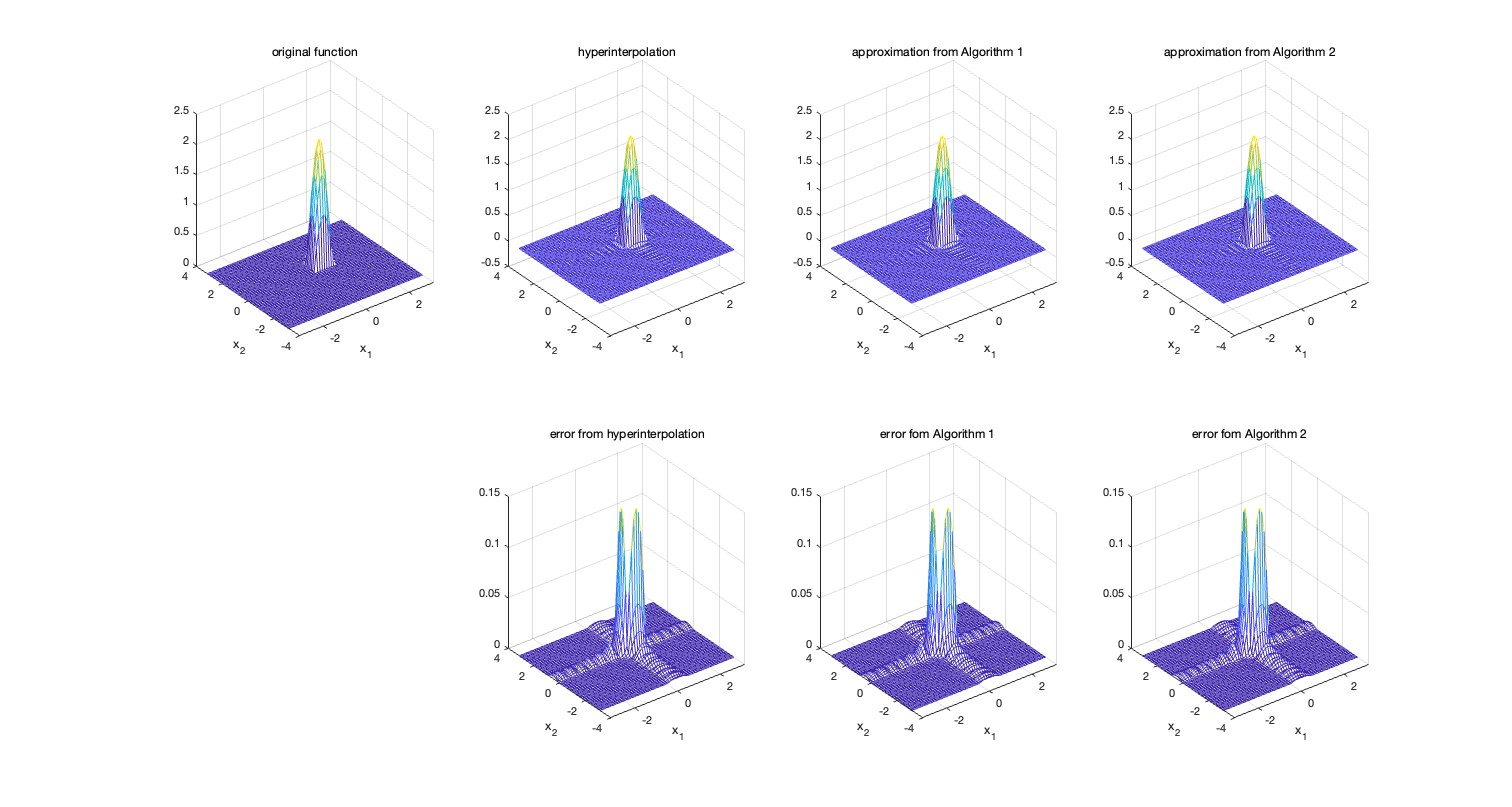}}\\
    \subfloat[$f_2(x_1,x_2)$]{\includegraphics[width=0.8\linewidth]{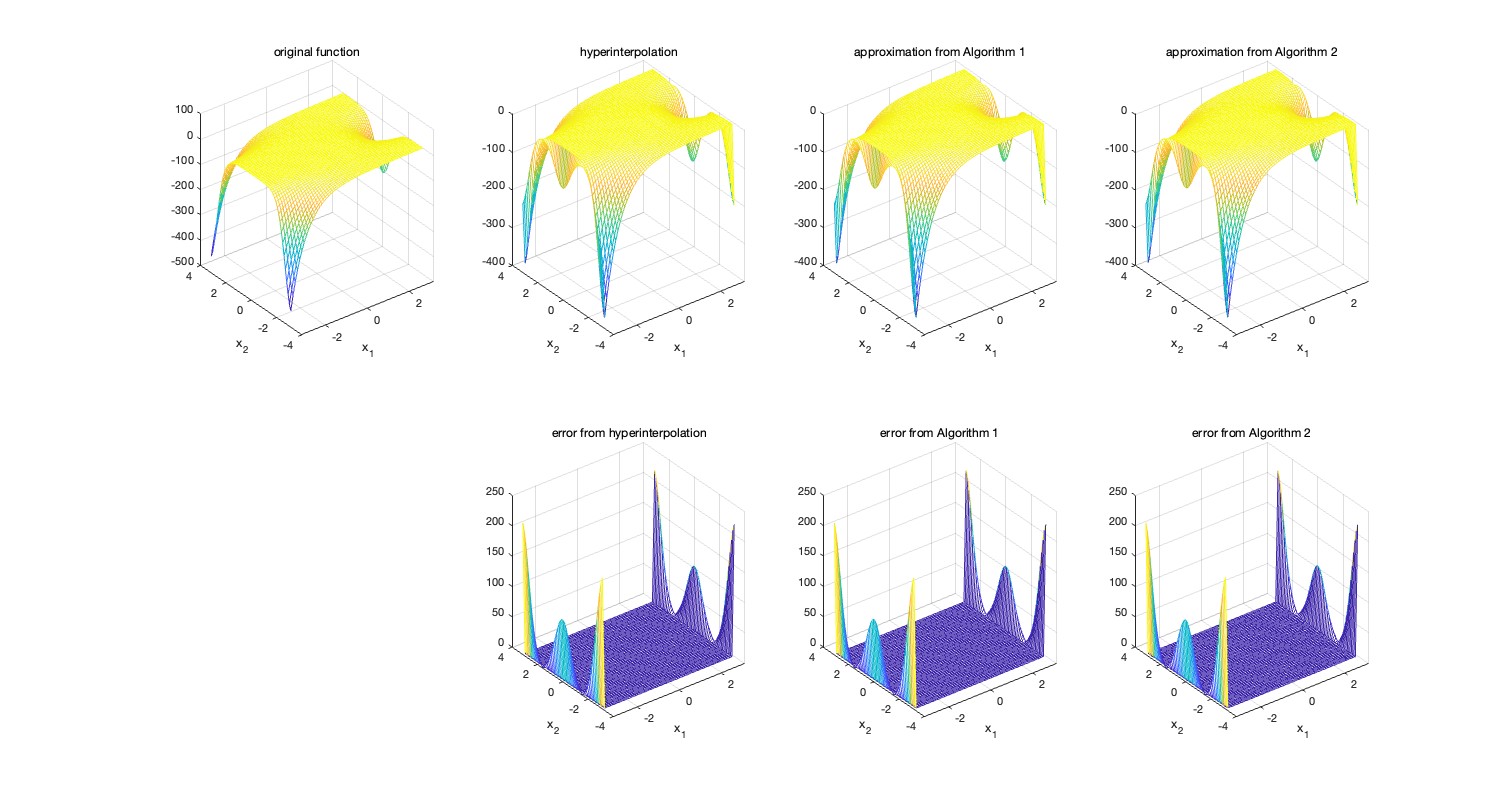}}\\
    \subfloat[$f_3(x_1,x_2)$]{\includegraphics[width=0.8\linewidth]{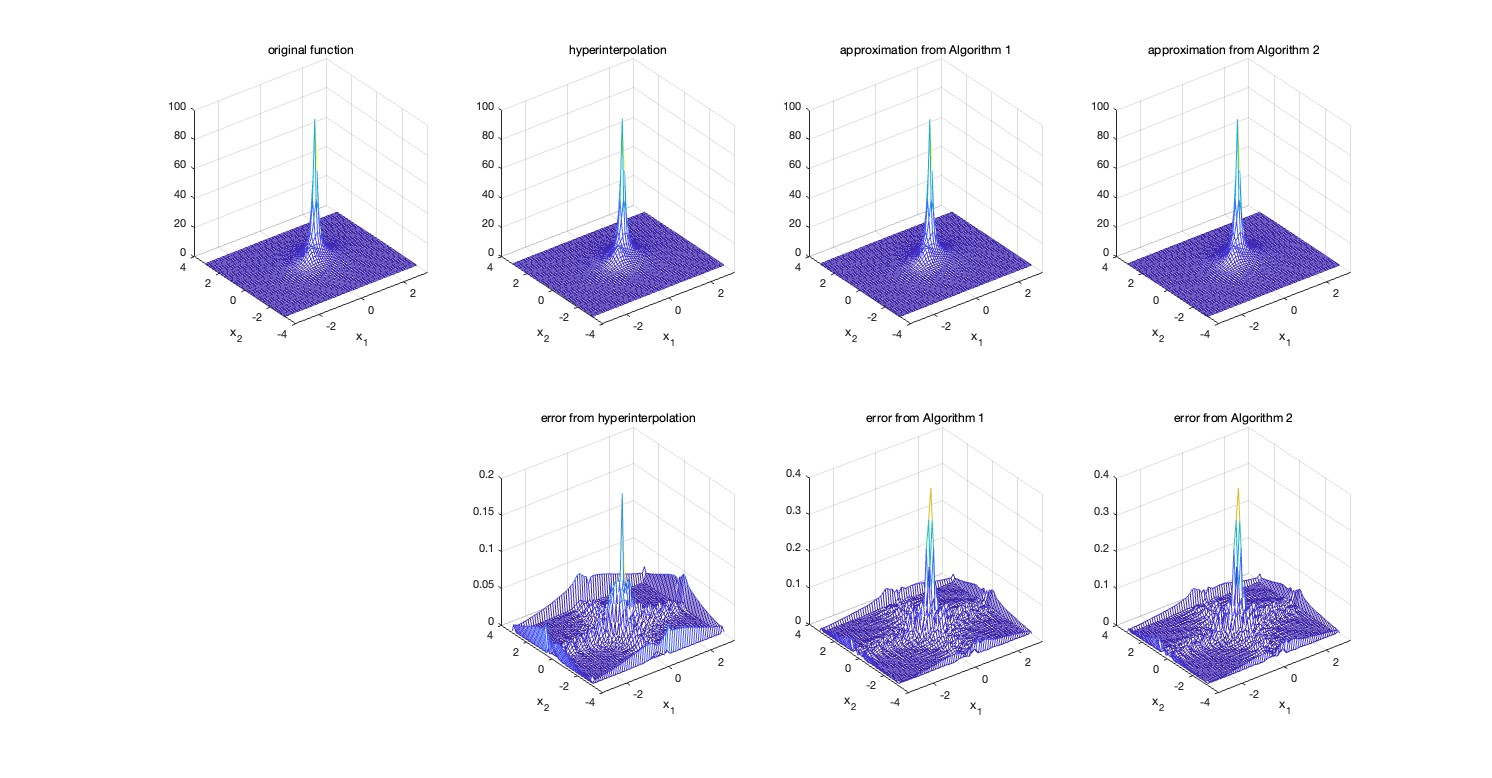}}\\
    \caption{For the special choice of $(b_1,b_2)$ and $\tau=1e-5$, the values of the truncated Fourier series expression and its approximation, and the associated error function  obtained by applying Algorithms \ref{t-lasso:alg1-main} and \ref{t-lasso:alg2-main} with the tensor-product Clenshaw-Curtis quadrature to the test functions $f_1(x_1,x_2)$, $f_2(x_1,x_2)$ and $f_3(x_1,x_2)$.}\label{t-lasso:app:fig5-main}
\end{figure}

\begin{figure}
    \setlength{\tabcolsep}{4pt}
    \renewcommand\arraystretch{1}
    \centering
    \subfloat[$f_1(x_1,x_2)$]{\includegraphics[width=0.8\linewidth]{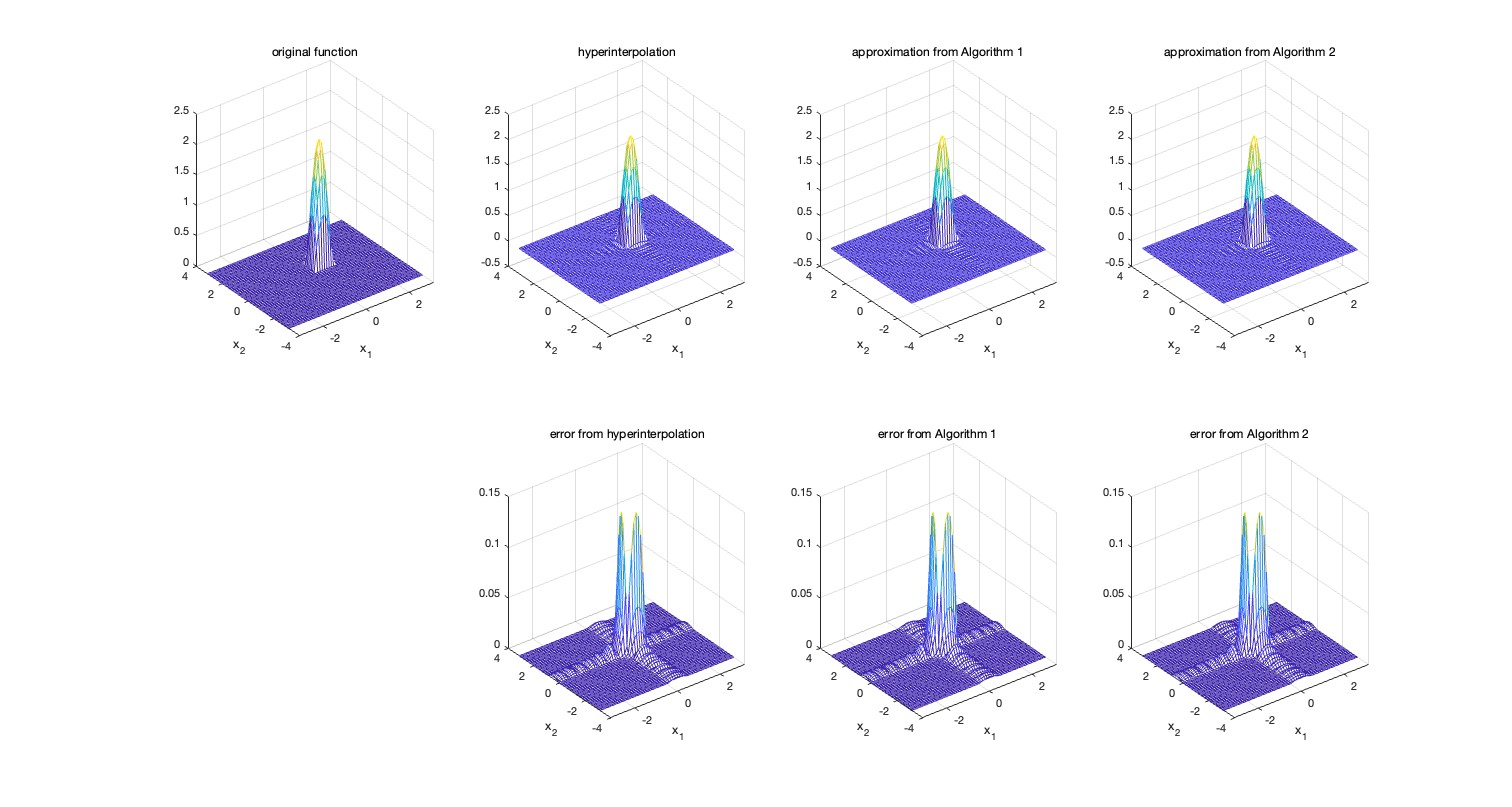}}\\
    \subfloat[$f_2(x_1,x_2)$]{\includegraphics[width=0.8\linewidth]{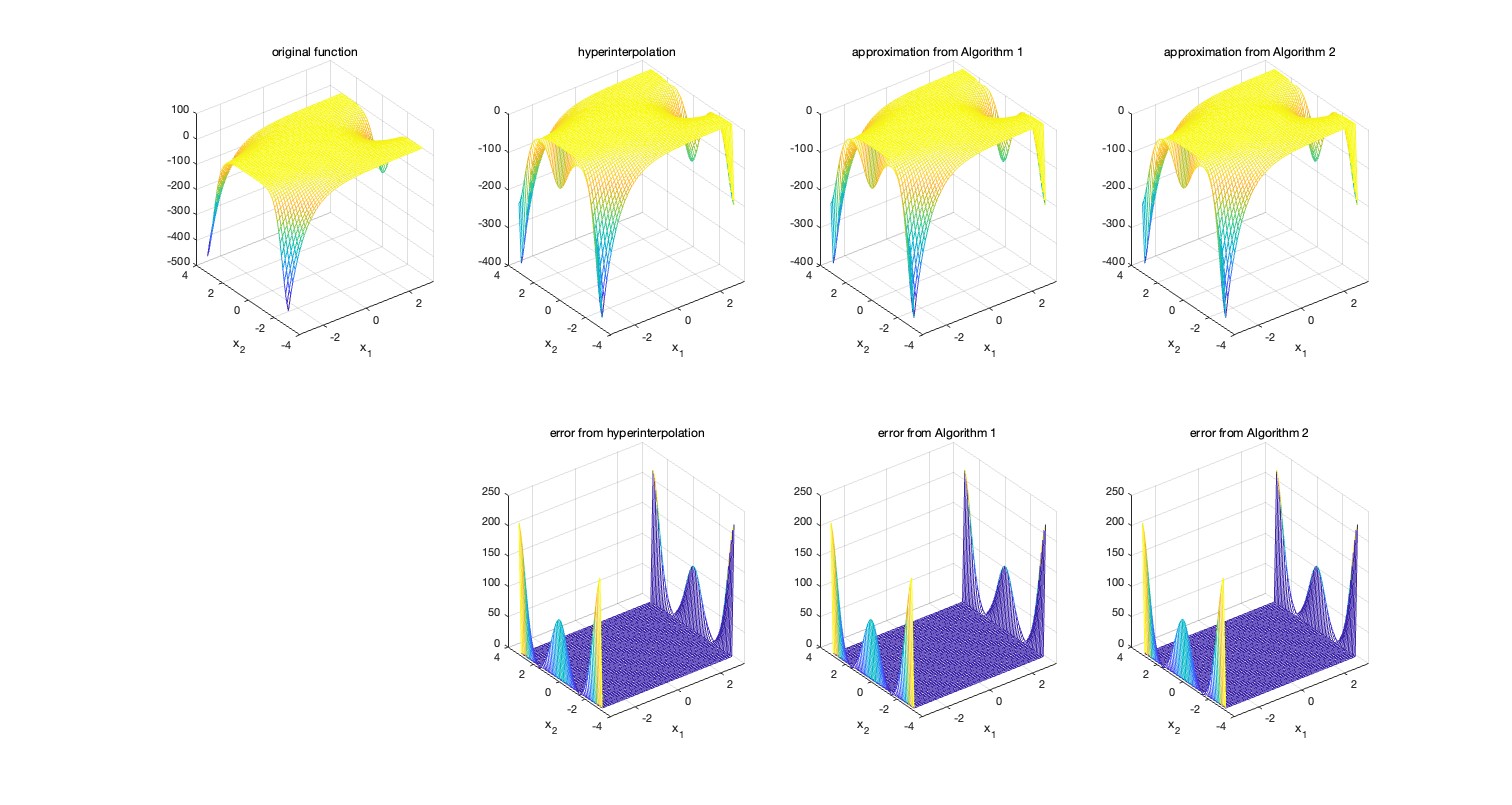}}\\
    \subfloat[$f_3(x_1,x_2)$]{\includegraphics[width=0.8\linewidth]{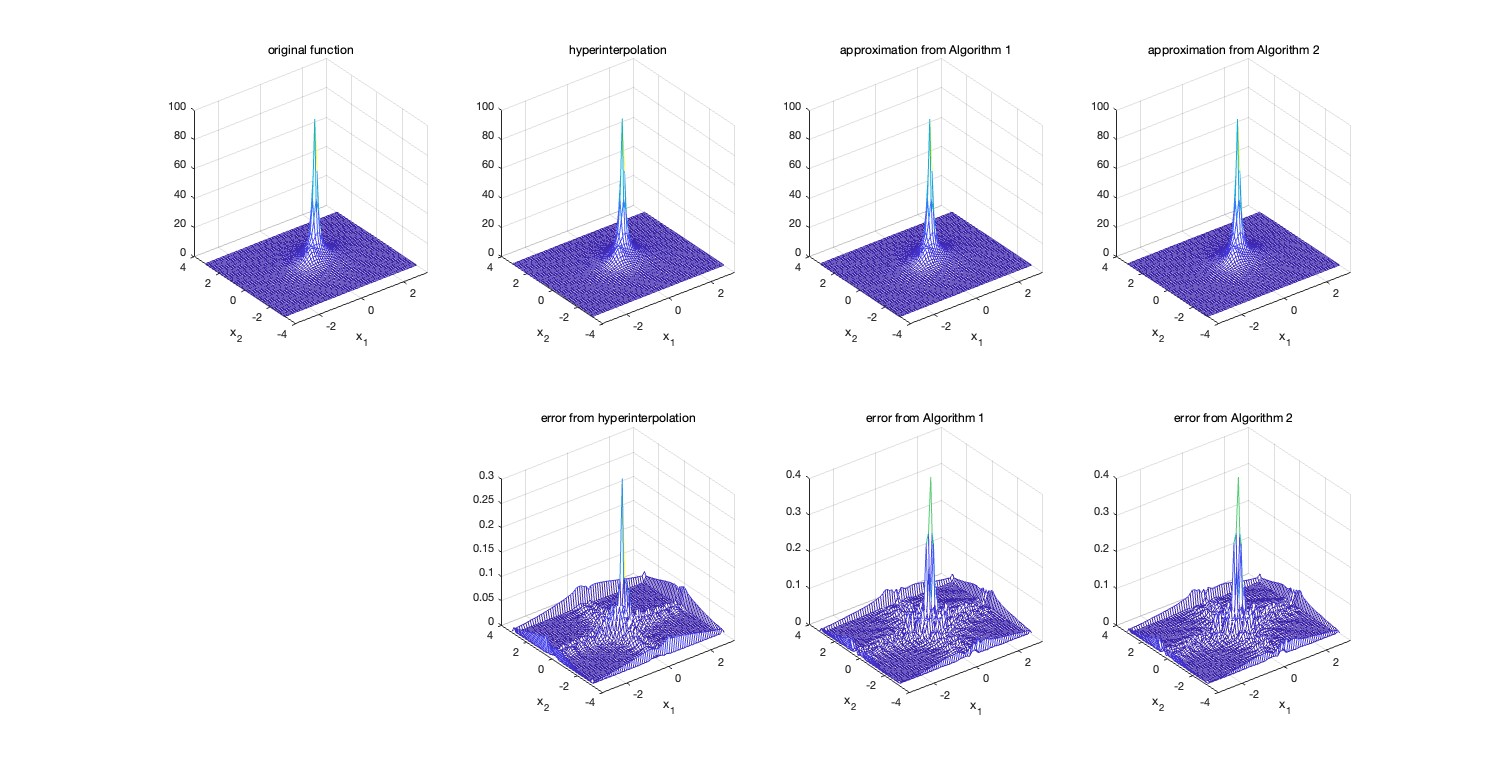}}\\
    \caption{For the special choice of $(b_1,b_2)$ and $\tau=1e-5$, the values of the truncated Fourier series expression and its approximation, and the associated error function  obtained by applying Algorithms \ref{t-lasso:alg1-main} and \ref{t-lasso:alg2-main} with the tensor-product Gauss-Legendre quadrature to the test functions $f_1(x_1,x_2)$, $f_2(x_1,x_2)$ and $f_3(x_1,x_2)$.}\label{t-lasso:app:fig6-main}
\end{figure}
\section{Conclusion}
\label{t-lasso:sec6-main}

This paper tackled the computational bottleneck of traditional hyperinterpolation for bivariate functions—the curse of dimensionality stemming from the need to compute $\mathcal{O}(I_1 I_2)$ Fourier coefficients via double integrals. We introduced a novel, efficient framework that leverages matrix CUR decomposition to construct a low-rank approximation of the coefficient matrix without explicitly calculating all its entries.

Our core contribution is a method that significantly reduces the number of required integrals from $\mathcal{O}(I_1 I_2)$ to $\mathcal{O}(I_1 S_2 + I_2 S_1)$, where $S_n \ll I_n$. This is achieved through two adaptive algorithms that intelligently select a small subset of Fourier modes, compute only the corresponding coefficients, and use them to build a accurate approximation $\widetilde{F}f(x_1,x_2)$ of the full truncated series $Ff(x_1,x_2)$. We provided theoretical error bounds, ensuring the approximation quality is controllable and on par with the standard method when the coefficient matrix is low-rank.

Numerical experiments confirmed the effectiveness of our approach. The algorithms successfully produced approximations with errors comparable to the full truncation while drastically reducing computation time. This work effectively bridges function approximation with matrix decomposition, offering a scalable alternative to spectral methods for high-dimensional problems.

Future work will focus on extending this approach to functions of three or more variables using tensor decompositions (e.g., CP, Tucker, Tensor Train), integrating with sparse grids or (quasi-)Monte Carlo methods to further reduce the number of function evaluations, and enhancing the theory behind the adaptive sampling strategies.

\appendix

\setcounter{equation}{0}
\renewcommand{\theequation}{A.\arabic{equation}}
\setcounter{figure}{0}
\renewcommand{\thefigure}{C.\arabic{figure}}
\setcounter{table}{0}
\renewcommand{\thetable}{C.\arabic{table}}
\setcounter{algorithm}{0}
\renewcommand{\thealgorithm}{C.\arabic{algorithm}}

\section{Hyperinterpolation}
\label{t-lasso:app3-main}
The concept of hyperinterpolation was introduced by Sloan in 1995 \cite{sloan1995hyperinterpolation}. The hyperinterpolation operator $\mathcal{L}_n$ is defined by replacing Fourier integrals in the $L_2$ orthogonal projection onto polynomial spaces with a discrete measure based on a positive weight quadrature rule that has algebraic precision $2n$. The result of \cite{sloan1995hyperinterpolation} has sparked numerous investigations into finding suitable quadrature rules for different regions, thereby expanding the potential applications of hyperinterpolation \cite{an2022exactness,an2024bypassing,An_ran_2025}.

Hyperinterpolation is a powerful tool in high-dimensional approximation \cite{dai2006hyperinterpolation,LeGia2001uniform,lin2021distributed,reimer2012multivariate,Wade2013hyperinterpolation,Wang2017needlet}. It requires only the availability of a positive weight quadrature rule that exactly integrates polynomial of degree $2n$  \cite{sloan1995hyperinterpolation}. This means the function of interest must be sampled on a carefully selected finite set to meet the requirements of the quadrature formula. In most applications, functions are often given by sampled data. However, the modern era of high-throughput data collection creates data with abundant noise. To recover functions from noisy data, An and Wu developed Lasso hyperinterpolation \cite{an2021lasso}, which uses a soft thresholding operator to process all hyperinterpolation coefficients. Although Lasso hyperinterpolation does not retain the projection property or basis invariance of classical hyperinterpolation, it offers an efficient method for basis selection and denoising, leading to a sparse solution. Lasso hyperinterpolation \cite{an2021lasso} is the solution to an $\ell_1$-regularized weighted discrete least squares problem, which is regarded as a convex relaxation of an $\ell_0$-regularized problem \cite{Foucart2013compressing}.

Let $L^2([-1,1])$ denote the Hilbert space of square-integrable functions on $[-1,1]$, equipped with the inner product
\begin{equation}\label{equ:1-1}
\langle f(x),g(x) \rangle = \int_{-1}^{1} f(x)g(x)w(x)\,dx  \qquad \forall f,g \in L^2([-1,1]),
\end{equation}
and the induced norm $\|f\|_2: = \langle f,f\rangle^{1/2}$, where $w(x)$ is a weight function (e.g., $w(x)=1/\sqrt{1-x^2}$ for Chebyshev polynomials of the first kind). For a given integer $I>0$, let $\Pi_I ([-1,1]) \subset L^2([-1,1])$ be the space of polynomials with total degree at most $I$, restricted to $[-1,1]$. We define an orthonormal basis of $\Pi_I([-1,1])$:
\[
\{ \Phi_{\ell}| \ell = 1, \ldots, I+1 \} \subset \Pi_I([-1,1])
\]
satisfying
\[
\langle \Phi_{\ell},\Phi_{\ell'} \rangle = \delta_{\ell \ell'} \qquad \forall 1 \leq \ell, \ell' \leq I+1.
\]

In this section, we model a multivariate function \( f (x^{(1)},\dots,x^{(N)}) \) defined on the canonical domain
$$\Omega =\underbrace{[-1, 1] \times\dots\times [-1,1]}_{N}:=[-1,1]^{\times N}.$$
Let \(\mathcal{F} \in \mathbb{Z}_{+}^{M_1 \times\dots\times M_N}\) represent a discrete array of size $M_1 \times\dots\times M_N$, where each entry $f_{m_1\dots m_N}=f(x_{m_1}^{(1)},\dots,x_{m_N}^{(N)})$ is a real number, sampled at the grid points
 \begin{equation*}
     \mathbb{X}_{\text{source}} = \{(x_{m_1}^{(1)},\dots,x_{m_N}^{(N)}): m_n=1,2,\dots,M_n,n=1,2,\dots,N\}.
 \end{equation*}
For given $N$ positive integers $\{I_1,I_2,\dots,I_N\}$, let $\{\Phi_{i_n}^{(n)}(x^{(n)})\}_{i_n=1}^{I_n+1}$ be an orthonormal basis of $\Pi_{I_n}([-1,1])$. The goal of image scaling (downscaling or upscaling) is to accurately reconstruct the underlying function $f (x^{(1)},\dots,x^{(N)})$ on a new grid of size $S_1\times \dots \times S_N$, denoted by
\begin{equation*}\label{equ:1-3}
\mathbb{X}_{\text{target}} = \{(y_{s_1}^{(1)},\dots,y_{s_N}^{(N)}): s_n=1,2,\dots,S_n,n=1,2,\dots,N\},
\end{equation*}
thereby producing a resized image $\mathcal{R} \in \mathbb{R}^{S_1 \times \dots \times S_N}$ with entries
\[
r_{s_1 \dots s_N} \approx f(y_{s_1}^{(1)},\dots,y_{s_N}^{(N)}).
\]

The core challenge is to approximate $f (x^{(1)},\dots,x^{(N)})$ on $\mathbb{X}_{\text{target}}$ using only its known values on $\mathbb{X}_{\text{source}}$. This is achieved through a polynomial expansion of the form
\begin{equation*}
    p_{I_1,\dots,I_N}(x^{(1)},\dots,x^{(N)}) = \sum_{i_1=1}^{I_1+1} \dots\sum_{i_N=1}^{I_N+1} c_{i_1\dots i_N} \Phi_{1}^{(i_1)}(x^{(1)}) \dots\Phi_{N}^{(i_N)}(x^{(N)}),
\end{equation*}
where the coefficients $c_{i_1\dots i_N}$ are determined by $L_2$-orthogonal projection:
{\small
\begin{equation*}
   \begin{split}
       &c_{i_1\dots i_N} = \langle f (x^{(1)},\dots,x^{(N)}), \Phi_{i_1}^{(1)}(x^{(1)}) \dots\Phi_{i_N}^{(N)}(x^{(N)}) \rangle \\
       &= \int_{-1}^1\dots \int_{-1}^1 f (x^{(1)},\dots,x^{(N)}) \Phi_{i_1}^{(1)}(x^{(1)}) \dots\Phi_{i_N}^{(N)}(x^{(N)}) w^{(1)}(x^{(1)}) \dots w^{(N)}(x^{(N)})  \,dx^{(1)}\dots  dx^{(N)}.
   \end{split}
\end{equation*}
}

However, a major practical obstacle arises: since $f$ is typically provided only as discrete values, these integrals cannot be computed  exactly. To overcome this, we employ numerical integration via positive quadrature rules--a concept central to hyperinterpolation, introduced by Sloan in the seminal paper \cite{sloan1995hyperinterpolation}. Hyperinterpolation replaces the continuous inner product \eqref{equ:1-1} with a discrete (semi) inner product constructed using a cubature rule exact for polynomials up to degree $2I$.

Let $\{x_m\}^{M}_{m=1}$ and  $\{w_m\}_{m=1}^{M}$ be quadrature nodes and positive weights on $[-1,1]$ such that
\begin{equation*}
    \int_{-1}^{1} g(x) w(x)\, dx \approx \sum_{m=1}^{M}w_m g(x_m),
\end{equation*}
is exact for all $g \in \Pi_{2I}([-1,1])$. The \emph{hyperinterpolation operator} $\mathcal{L}_I: \mathcal{C}([-1,1]) \to \Pi_I([-1,1])$ as
\begin{equation*}
\mathcal{L}_I f:= \sum_{\ell=1}^{I+1} \langle {f, \Phi_{\ell}} \rangle_M   \Phi_{\ell},
\end{equation*}
where $\langle \cdot, \cdot \rangle_M$ denotes the discrete (semi) inner product introduced by the quadrature rule:
\begin{equation*}
    \langle {f, \Phi_{\ell}} \rangle_M=\sum_{m=1}^{M}w_m f(x_m)\Phi_{\ell}(x_m).
\end{equation*}

The coefficients $c_{i_1\dots i_N}$ are thus approximated by $c_{i_1\dots i_N}\approx \widetilde{c}_{i_1\dots i_N}$ with
\begin{equation*}
\begin{aligned}
    \widetilde{c}_{i_1\dots i_N}&=\sum_{m_1=1}^{M_1}\dots\sum_{m_N=1}^{M_N}
    f(x_{m_1}^{(1)}, \dots, x_{m_N}^{(N)}) \Phi_{i_1}^{(1)}(x_{m_1}^{(1)}) \dots\Phi_{i_N}^{(N)}(x_{m_N}^{(N)}) w_{m_1}^{(1)} \dots w_{m_N}^{(N)},
\end{aligned}
\end{equation*}
with $i_n=1,2,\dots,I_n+1$ and $n=1,2,\dots,N$, where for each $n$, $\{x_{m_n}^{(n)}\}^{M_n}_{m_n=1}$ and  $\{w_{m_n}^{(n)}\}^{M_n}_{m_n=1}$ are quadrature nodes and positive weights on $[-1,1]$, and the hyperinterpolation polynomial over $\Omega=[-1,1]^{\times N}$ is constructed as:
\begin{equation*}
    \mathcal{L}_{I_1,\dots I_N} f (x^{(1)},\dots,x^{(N)}) = \sum_{i_1=1}^{I_1+1} \dots\sum_{i_N=1}^{I_N+1} \widetilde{c}_{i_1\dots i_N} \Phi_{1}^{(i_1)}(x^{(1)}) \dots\Phi_{N}^{(i_N)}(x^{(N)}).
\end{equation*}


Let $\mathbf{W}_{n} = \operatorname{diag}(w_{1}^{(n)},w_{2}^{(n)}, \cdots,w_{M_n}^{(n)})$, $\mathbf{A}_{n}=[\Phi_{i_n}^{(n)}(x_{m_n}^{(n)})] \in \mathbb{R}^{(I_n+1) \times M_n}$, and $\widetilde{\mathcal{C}}=[\widetilde{c}_{i_1\dots i_N}]\in \mathbb{R}^{(I_1+1)\times\dots\times (I_N+1)}$. On the source grid $\mathbb{X}_{\text{source}}$, the coefficient tensor $\widetilde{\mathcal{C}}$ can be approximated  as:
\[
\widetilde{\mathcal{C}}=\mathcal{F}\times_1(\mathbf{A}_1\mathbf{W}_1)\dots\times_N(\mathbf{A}_N\mathbf{W}_N).
\]
Finally, the reconstructed function values on the target  grid $\mathbb{X}_{\text{target}}$ are given by:
\[
\mathcal{R} = \widetilde{\mathcal{C}}\times_1\widetilde{\mathbf{A}}_1^\top\dots\times_N\widetilde{\mathbf{A}}_N^\top,
\]
where $\widetilde{\mathbf{A}}_n =[\Phi_{i_n}^{(n)}(y_{s_n}^{(n)})] \in \mathbb{R}^{(I_n+1) \times S_n}$ with $n=1,2,\dots,N$. The number of all entries in $\widetilde{\mathcal{C}}$ is $\prod_{n=1}^{N}(I_n+1)$, which leads to the curse of dimensionality. Hence, one key work is to design efficient algorithms for obtaining an approximation $\widetilde{\mathcal{C}}_{{\rm approx}}$ to $\widetilde{\mathcal{C}}$ such that the Frobenius norm of $\mathcal{R}_{{\rm approx}}-\mathcal{R}$ is less than a given tolerance, where $\mathcal{R}_{{\rm approx}} = \widetilde{\mathcal{C}}_{{\rm approx}}\times_1\widetilde{\mathbf{A}}_1^\top\dots\times_N\widetilde{\mathbf{A}}_N^\top$.
\begin{remark}
    The readers can refer to \cite{kolda2009tensor} for the symbol of any tensor, the definition of tensor-matrix multiplication and the Frobenius norm of a tensor. In this paper, we focus on the case of $N=2$ based on the matrix decomposition strategy. Based on the difference between tensor decomposition and matrix decomposition, the case of $N>3$ will be considered in the future work.
\end{remark}
\section{Existing work for finding a good set of indices}
\label{t-lasso:app1-main}
It is worth noting that the low-rank approximation of a matrix is ubiquitous in computational sciences. An efficient method for calculating a low-rank approximation of a matrix is based on a subset of rows and/or columns of this matrix. As we know, the quality of the low-rank approximation is decided by a good subset of row and column indices, which is also called as the Column Subset Selection Problem (CSSP). CSSP is a classical linear algebra problem that connects to a variety of fields, including theoretical computer science and statistical learning. This type of low-rank approximation includes CUR decomposition \cite{boutsidis2017optimal,mahoney2009cur,sorensen2016deim}, interpolative decompositions \cite{voronin2017efficient}, (pseudo-)skeleton approximation \cite{goreinov1997theory}, adaptive cross approximation \cite{bebendorf2000approximation}, pivoted/rank-revealing QR decompositions \cite{gu1996efficient}, pivoted Cholesky decompositions \cite{harbrecht2012low}, and so on. The existing algorithms for finding a good set of row and column indices of a matrix $\mathbf{A}\in\mathbb{R}^{I\times J}$ can be divided into two different categories: pivoting or sampling.

For pivoting-based methods, we can use column pivoted QR (CPQR) decomposition or LU factorization with complete pivoting (see e.g., \cite{golub2013matrix}) on $\mathbf{A}$ or the singular vectors of $\mathbf{A}$ to obtain the pivots which we then use as row or column indices. The discrete empirical interpolation method (DEIM) (see \cite{chaturantabut2010nonlinear}) is another popular method for obtaining pivots from the dominant singular vectors of $\mathbf{A}$. The DEIM method needs estimations of the top singular vectors to proceed. A new variant of DEIM, called L-DEIM (see \cite{gidisu2021hybrid}), is obtained by combining the deterministic leverage scores and DEIM. This method allows for the selection of a number of indices greater than the number of input singular vectors.

Several randomized algorithms based on sketching and/or sampling have been proposed to find a good set of row and column indices of a large-scale matrix (see, e.g., \cite{drineas2008relative,duersch2020randomized,mahoney2009cur,voronin2017efficient}). For the sketch-based algorithms, we use the pivoting schemes to a much smaller matrix, which is obtained by using a random projection matrix to project the matrix $\mathbf{A}$. For the sampling-based methods, the column or row indices are sampled from some probability distributions, which are obtained from certain information about $\mathbf{A}$. There exist several sampling strategies, such as subspace sampling (see \cite{drineas2008relative,mahoney2009cur}), uniform sampling (see \cite{chiu2013sublinear}), volume sampling (see \cite{cortinovis2020low}), DPP sampling (see \cite{derezinski2021determinantal}), and BSS sampling (see \cite{boutsidis2014near}). In particular, the CUR approximation obtained from volume sampling has close-to-optimal error guarantees \cite{cortinovis2020low,voronin2017efficient}.

\section{An modified version from Algorithm \ref{t-lasso:alg2-main}}
\label{t-lasso:app2-main}
For each $k$ in Algorithm \ref{t-lasso:alg2-main}, we need to obtain three matrices $\mathbf{G}_{1k}\in\mathbb{C}^{(2(k-1)b_1+1)\times 2b_2}$, $\mathbf{G}_{2k}\in\mathbb{C}^{2b_1\times (2(k-1)b_2+1)}$ and $\mathbf{G}_{3k}\in\mathbb{C}^{2b_1\times 2b_2}$ according to (\ref{t-lasso:method-one:sub3-main}). When we set $\mathbf{G}_{1k}$ and $\mathbf{G}_{2k}$ as the zero matrix, a simplified version for Algorithm \ref{t-lasso:alg2-main} is summarized in Algorithm \ref{t-lasso:alg1:app-main}.
\begin{algorithm}[htb]
    \caption{A modification of Algorithm \ref{t-lasso:alg2-main}}
    \begin{algorithmic}[1]
        \STATEx {\bf Input}: A function $f(x_1,x_2)\in H^{\alpha}(\mathbb{H}_2^0)$, a given $0<\epsilon<1$, the pair of block sizes $(b_1,b_2)$, a tolerance $0<\tau<1$ and the maximum number of iterations $K$.
        \STATEx {\bf Output}: The 3-tuple $\{\mathbf{C},\mathbf{U},\mathbf{R}\}$, which is used to form the function $\widetilde{F}f(x_1,x_2)$.
        \STATE Initialize $k=0$ and ${\rm tol}=+\infty$.
        \STATE According to Theorem \ref{t-lasso:general-theorem-main} to estimate the pair $\{I_1,I_2\}$ such that $I_n=O((1/\epsilon)^{1/\alpha})$.
        \STATE Select $\mathbb{T}_n=\{0\}$ and let $\mathbb{T}_n':=\mathbb{T}_n$ with $n=1,2$.
        \STATE Compute $\mathbf{G}$ according to (\ref{t-lasso:method-one:sub3-main}) with $\{\mathbb{T}_1',\mathbb{T}_2'\}$.
        \STATE Compute ${\rm nF}_{\mathbf{G}}=\|\mathbf{G}\|_F^2$.
        \WHILE{${\rm tol}>\tau$ or $k\leq K$}
           \STATE Select $\mathbb{T}_n'=[-kb_n:-(k-1)b_n-1]\cup[(k-1)b_n+1:kb_n]$ with $n=1,2$.
           \STATE Compute $\mathbf{G}_{k}$ according to (\ref{t-lasso:method-one:sub3-main}) with $\{\mathbb{T}_1',\mathbb{T}_2'\}$, respectively.
           \STATE Update ${\rm nF}_{\mathbf{G}}={\rm nF}_{\mathbf{G}}+\|\mathbf{G}_{k}\|_F^2$ and
           \begin{align*}
              \mathbf{G}=
              \begin{bmatrix}
                 \mathbf{G} & \mathbf{0}_{2(k-1)b_1+1,2b_2}\\
                 \mathbf{0}_{2b_1,2(k-1)b_2+1}& \mathbf{G}_{k}
              \end{bmatrix}.
           \end{align*}
           \STATE Obtain ${\rm tol}=\sigma_{\min}(\mathbf{G})/\sqrt{{\rm nF}_{\mathbf{G}}}$.
           \STATE Update $k=k+1$.
        \ENDWHILE
        \STATE Compute $\mathbf{U}=\mathbf{G}^\dag$.
        \STATE Compute $\mathbf{C}$ according to (\ref{t-lasso:method-one:sub1-main}) with $\mathbb{T}_2$, and $\mathbf{R}$ according to (\ref{t-lasso:method-one:sub2-main}) with $\mathbb{T}_1$.
        \STATE Return the 3-tuple $\{\mathbf{C},\mathbf{U},\mathbf{R}\}$.
    \end{algorithmic}
    \label{t-lasso:alg1:app-main}
\end{algorithm}

Similarly to Algorithm \ref{t-lasso:alg2-main}, Algorithm \ref{t-lasso:alg1:app-main} needs
\begin{equation*}
   (2I_1+1)S_2+(2I_2+1)S_1-2S_1S_2+Kb_1b_2
\end{equation*}
double integrals to obtain the 3-tuple $\{\mathbf{C},\mathbf{U},\mathbf{R}\}$, that is, the number of double integrals in Algorithm \ref{t-lasso:alg1:app-main} is less than Algorithms \ref{t-lasso:alg1-main} and \ref{t-lasso:alg2-main}.

Finally, we count the complexity of Algorithm \ref{t-lasso:alg1:app-main}: a) when $k=0$, it costs one operation to obtain ${\rm nF}_{\mathbf{G}}$; b) for each $k\geq 1$, to update ${\rm nF}_{\mathbf{G}}$ requires $8b_1b_2$ operations; c) to compute $\sigma_{\min}(\mathbf{G})$ needs $O(4b_1b_2\min\{2b_1,2b_2\})$ operations; and d) to form the matrix $\mathbf{U}$ amends $O(4b_1b_2\min\{2b_1,2b_2\})$ operations.

We now compare the efficiencies of Algorithms \ref{t-lasso:alg2-main} and \ref{t-lasso:alg1:app-main} via three test functions $f_1(x_1,x_2)$, $f_3(x_1,x_2)$ and $f_3(x_1,x_2)$. The choices of parameters $(b_1,b_2)$, $K$, $\epsilon$ and $\tau$ in Algorithm \ref{t-lasso:alg1:app-main} are the same as that in Algorithm \ref{t-lasso:alg2-main} (used in Section \ref{t-lasso:sec5-main}). The values of the running time and the error function, and associated to CC, GL and NC are shown in Table \ref{t-lasso:tab1:app-main} and Figure \ref{t-lasso:app:fig7-main}, respectively.

\begin{table}[htb]
   \scriptsize
   \centering
   \begin{tabular}{|c|c|c|c|c|}
      \hline
      Types  & Algorithms & $f_1(x_1,x_2)$ & $f_2(x_1,x_2)$ & $f_3(x_1,x_2)$ \\
      \hline
      \multirow{2}{*}{CC} & Algorithm \ref{t-lasso:alg2-main}   &  5.3283 & 5.6843  & 7.5571  \\
      \cline{2-5}
      \multirow{2}{*}{} & Algorithm \ref{t-lasso:alg1:app-main} & 4.0714 & 7.3718 & 8.7227  \\
      \hline
      \multirow{2}{*}{GL} & Algorithm \ref{t-lasso:alg2-main}   & 4.4734 & 6.1846 & 6.0974  \\
      \cline{2-5}
      \multirow{2}{*}{} & Algorithm \ref{t-lasso:alg1:app-main} & 4.6489 & 6.1786 & 6.0174  \\
      \hline
      \multirow{2}{*}{NC} & Algorithm \ref{t-lasso:alg2-main}          & 7.2544 & 5.9793 & 8.4729  \\
      \cline{2-5}
      \multirow{2}{*}{} & Algorithm \ref{t-lasso:alg1:app-main} & 4.5033 & 8.7171 & 7.6851  \\
      \hline
   \end{tabular}
   \caption{For the special choice of $(b_1,b_2)$ and $\tau=1e-5$, the running time (seconds) obtained by applying truncated Fourier, and Algorithms \ref{t-lasso:alg2-main} and \ref{t-lasso:alg1:app-main} with three tensor-product quadratures to the test functions $f_1(x_1,x_2)$, $f_2(x_1,x_2)$ and $f_3(x_1,x_2)$.}
   \label{t-lasso:tab1:app-main}
\end{table}

\begin{figure}
    \setlength{\tabcolsep}{4pt}
    \renewcommand\arraystretch{1}
    \centering
    \subfloat[$f_1(x_1,x_2)$]{\includegraphics[width=0.8\linewidth]{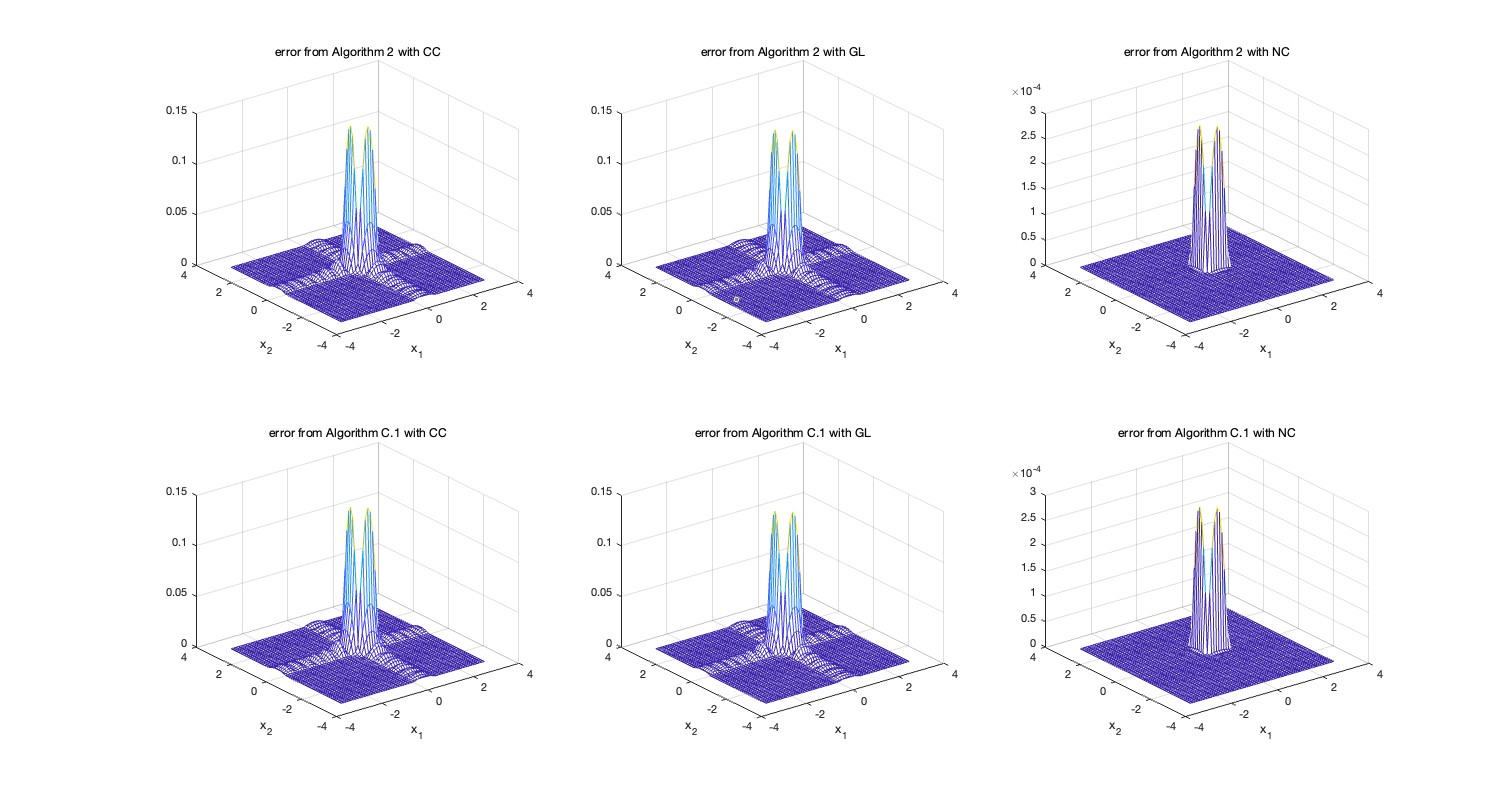}}\\
    \subfloat[$f_2(x_1,x_2)$]{\includegraphics[width=0.8\linewidth]{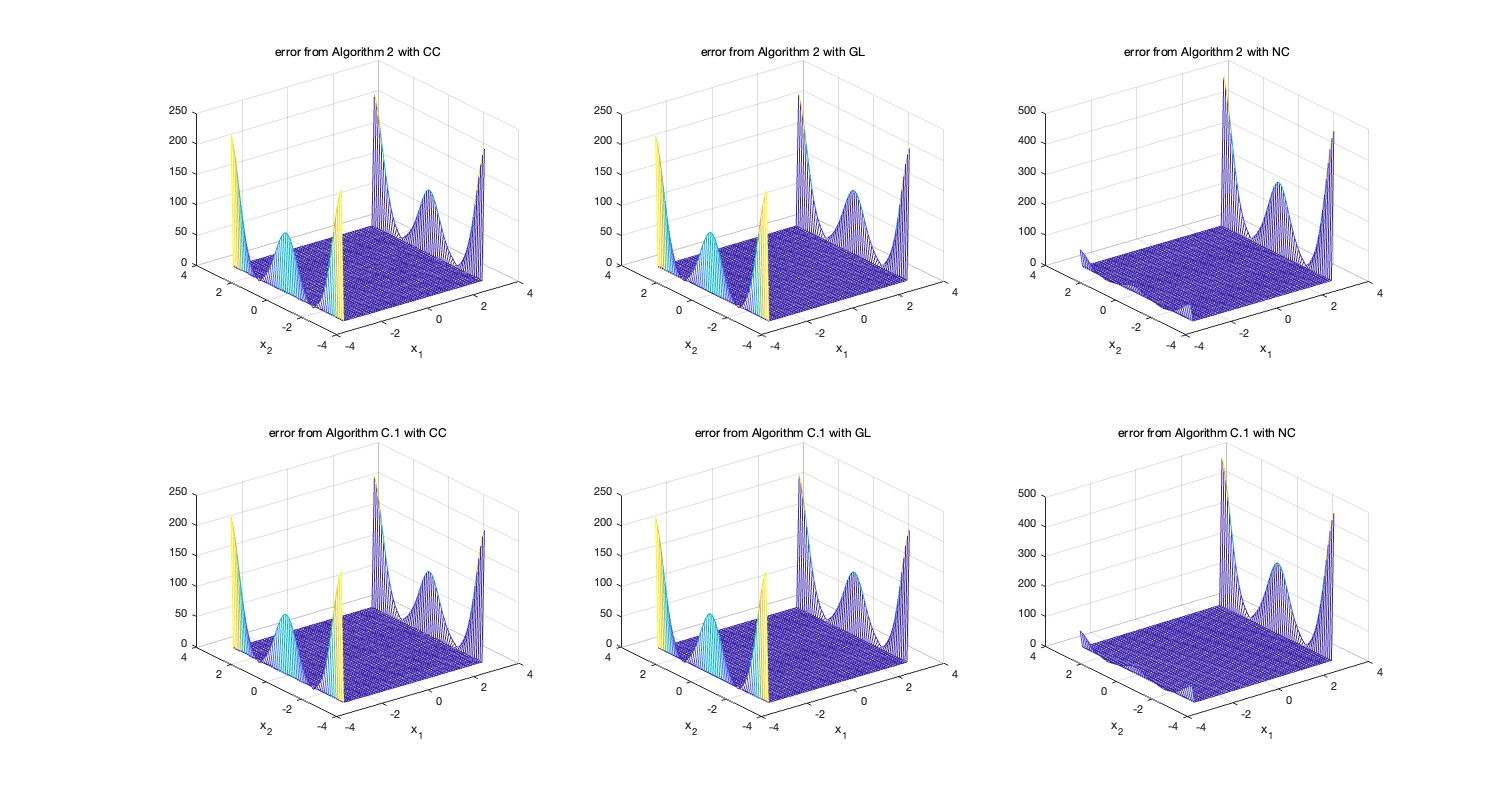}}\\
    \subfloat[$f_3(x_1,x_2)$]{\includegraphics[width=0.8\linewidth]{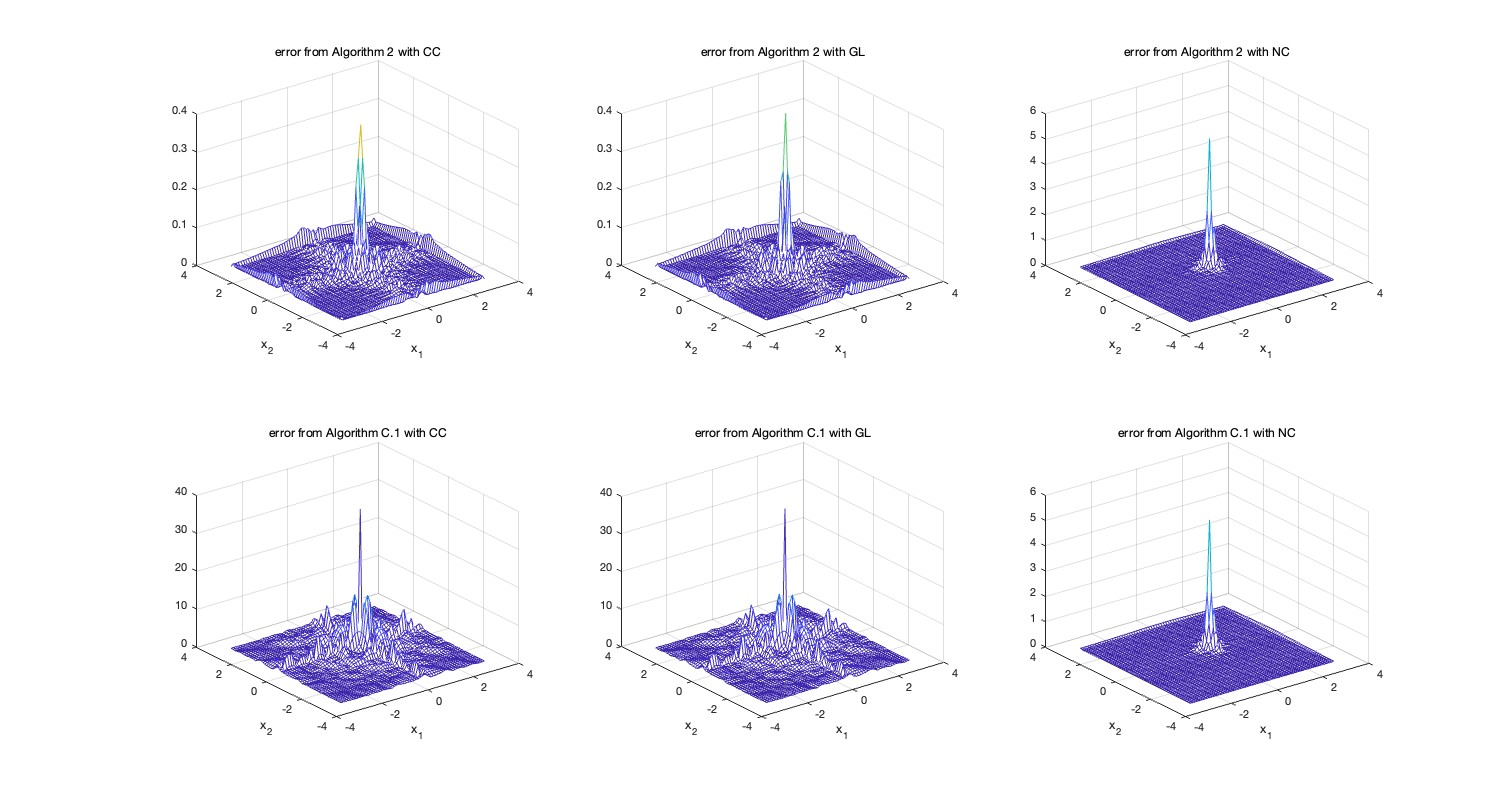}}\\
    \caption{For the special choice of $(b_1,b_2)$ and $\tau=1e-5$, the values of the associated error function obtained by applying Algorithms \ref{t-lasso:alg2-main} and \ref{t-lasso:alg1:app-main} with three tensor-product quadratures to the test functions $f_1(x_1,x_2)$, $f_2(x_1,x_2)$ and $f_3(x_1,x_2)$.}\label{t-lasso:app:fig7-main}
\end{figure}

According to Table \ref{t-lasso:tab1:app-main} and Figure \ref{t-lasso:app:fig7-main}, we can see that (a), for $f_1(x_1,x_2)$, $f_2(x_1,x_2)$ and $f_3(x_1,x_2)$, Algorithms \ref{t-lasso:alg2-main} and \ref{t-lasso:alg1:app-main} with three tensor-product quadratures are comparable in terms of the error function and running time; (b) Algorithms \ref{t-lasso:alg2-main} and \ref{t-lasso:alg1:app-main} with NC are not suitable for $f_3(x_1,x_2)$; and (c) for $f_1(x_1,x_2)$ and $f_2(x_1,x_2)$, Algorithm \ref{t-lasso:alg1:app-main} with CC and NC are faster than Algorithm \ref{t-lasso:alg2-main} with CC and NC, respectively.

\section*{Conflict of interest}
The authors declare that they have no conflict of interest.


\bibliographystyle{spmpsci}      
\bibliography{paper}

\end{document}